\pgfplotsset{compat=1.17}
\theoremstyle{plain} %bold title, italic body text
\newtheorem{theorem}{Theorem}[section]
\newtheorem{lemma}[theorem]{Lemma}
\newtheorem*{lemma*}{Lemma}
\newtheorem{proposition}[theorem]{Proposition}
\newtheorem*{proposition*}{Proposition}
\newtheorem{corollary}[theorem]{Corollary}
\theoremstyle{definition} % body text not italic
\newtheorem{remark}[theorem]{Remark}
\newtheorem{definition}[theorem]{Definition}
\newtheorem*{definition*}{Definition}
\newtheorem{construction}[theorem]{Construction}
\newtheorem{example}[theorem]{Example}
\newcommand{\R}{\mathbb{R}}
\newcommand{\Z}{\mathbb{Z}}
\newcommand{\N}{\mathbb{N}}
\newcommand{\Fi}{\mathbb{F}}
\newcommand{\A}{\mathcal{A}}
\newcommand{\B}{\mathcal{B}}
\newcommand{\C}{\mathcal{C}}
\newcommand{\M}{\mathcal{M}}
\newcommand{\Nn}{\mathcal{N}}
\newcommand{\I}{\mathcal{I}}
\newcommand{\II}{\mathbb{I}}
\newcommand{\Oo}{\mathcal{O}}
\newcommand{\J}{\mathcal{J}}
\newcommand{\X}{\mathfrak{X}}
\newcommand{\one}{\mathds{1}}
\newcommand{\eps}{\varepsilon}
\newcommand{\norm}[1]{\left\|#1\right\|}
\DeclareMathOperator{\im}{im}
\DeclareMathOperator{\End}{End}
\DeclareMathOperator{\Hom}{Hom}
\DeclareMathOperator{\id}{id}
\DeclareMathOperator{\Sing}{Sing}
\DeclareMathOperator{\Ch}{Ch}
\DeclareMathOperator{\Crit}{Crit}
\DeclareMathOperator{\PCh}{PCh}
\DeclareMathOperator{\EPCh}{EPCh}
\DeclareMathOperator{\TPCh}{TPCh}
\DeclareMathOperator{\TEPCh}{TEPCh}
\DeclareMathOperator{\wbCh}{wbCh}
\DeclareMathOperator{\wbChgen}{wb^+Ch}
\DeclareMathOperator{\MC}{MC}
\DeclareMathOperator{\Vect}{Vect}
\DeclareMathOperator{\PVect}{PVect}
\DeclareMathOperator{\TPVect}{TPVect}
\DeclareMathOperator{\Span}{Span}
\DeclareMathOperator{\Mult}{Mult}
\DeclareMathOperator{\ind}{ind}
\DeclareMathOperator{\Barc}{Bar}
\DeclareMathOperator{\tBarc}{tBar}
\DeclareMathOperator{\Rep}{Rep}
\newcommand{\argdot}{\hspace{1pt}\cdot\hspace{1pt}} %For the right spacing when we place a dot to show the dependency on an input argument.
\newcommand\TODO[3]{\hbox to 0pt{\textcolor{#1}{$^\bullet$}}\marginpar{\footnotesize \textcolor{#1}{\begin{flushleft}#2: #3\end{flushleft}}}}
\newcommand*\linenomathpatch[1]{%
  \cspreto{#1}{\linenomath}%
  \cspreto{#1*}{\linenomath}%
  \csappto{end#1}{\endlinenomath}%
  \csappto{end#1*}{\endlinenomath}%
}
\newcommand*\linenomathpatchAMS[1]{%
  \cspreto{#1}{\linenomathAMS}%
  \cspreto{#1*}{\linenomathAMS}%
  \csappto{end#1}{\endlinenomath}%
  \csappto{end#1*}{\endlinenomath}%
}
  \let\linenomathAMS\linenomathWithnumbers
  \patchcmd\linenomathAMS{\advance\postdisplaypenalty\linenopenalty}{}{}{}
  \let\linenomathAMS\linenomathNonumbers
\title{Tagged barcodes for the topological analysis of gradient-like vector fields}
\author{Clemens Bannwart\footnote{Department of Physics, Mathematics and Computer Science, University of Modena and Reggio Emilia, Italy,
\texttt{clemens.bannwart@gmail.com}} \and Claudia Landi\footnote{Department of Sciences and Methods for Engineering,  University of Modena and Reggio Emilia, Italy, 
\texttt{claudia.landi@unimore.it}}}
\date{}
\begin{document}

\maketitle

\begin{abstract}
    Intending to introduce a method for the topological analysis of fields, we present a pipeline that takes as an input a weighted and based chain complex, produces a factored chain complex, and encodes it as a barcode of tagged intervals (briefly, a tagged barcode). We show how to apply this pipeline to the weighted and based Morse chain complex of a gradient-like Morse-Smale vector field on a compact Riemannian manifold in both the smooth and discrete settings.
    Interestingly for computations, it turns out that there is an isometry between factored chain complexes endowed with the interleaving distance and their tagged barcodes endowed with the bottleneck distance. 
    Concerning stability, we show that the map taking a generic enough gradient-like vector field to its barcode of tagged intervals is continuous. 
    Finally, we prove that the tagged barcode of any such vector field can be approximated by the tagged barcode of a combinatorial version of it with arbitrary precision.   \\
    
\noindent
{\em MSC:}  55N31, 37B35, 57R25\\
 
\noindent
{\em Keywords:}  parametrized chain complex,  interval functor, interleaving distance, bottleneck distance, combinatorial vector field

\end{abstract}

\section*{Introduction}

% Presenting the setting
In topological data analysis (TDA), it is usual to represent data as a continuous function $f\colon M \to \mathbb{R}$, also called a scalar field, and to analyze it through its persistence barcode. The latter is a topological invariant of $f$ that serves as a qualitative and quantitative summary. The persistence barcode of $f$ is built in three steps: first, one parametrizes the $f$-sublevel set filtration of $M$ by level values; secondly, one considers the homology with field coefficients of each sublevel set to obtain a  functor that associates with each real value the homology vector space of the corresponding sublevel set, commonly known as a persistence module; finally, one exploits the fact that tame persistence modules can be decomposed as finite direct sums of simpler persistence modules, whose support is given by real intervals, to define the persistence barcode of $f$ as the multiset of such intervals \cite{OudotPersistenceTheoryBook15}. Thanks to properties like stability and invariance, persistence barcodes have received much attention not only as a mathematical construct but also in applications outside of mathematics \cite{database}. Stability refers to the fact that it is possible to endow both the set of functions and the set of persistence modules with distances so that the above-described pipeline is continuous. Invariance refers to the fact that precomposing $f$ with a homeomorphism does not change the final persistence barcode. 

%Explain the goal of this paper and its challenges
Of course, there is much interest in going beyond the case of a single scalar field. One generalization considers multiple scalar fields, i.e.~maps $f\colon M\to \R^n$ \cite{CarlssonZomorodian_MultiDPersistence}. 
In contrast,  the goal of this paper is to develop a persistence theory for vector fields on Riemannian manifolds, i.e. sections $v\colon M \to TM$ of the tangent bundle. In particular, we aim to define a barcode-like signature for vector fields with the properties of stability, invariance, and non-triviality: small perturbations of the vector field should yield small changes in the barcode; vector fields in the same geometrical-topological equivalence class should be assigned the same barcode; and barcodes should make it possible to distinguish different enough vector fields.

% Context and our stragegy (new)
The main obstacle to generalizing persistence to vector fields arises already in the first step of the pipeline because of the lack of a natural filtration defined by a vector field.
One strategy to overcome this issue is to abandon the initial filtration step in favor of a simplification procedure. The theme of simplification has played an important role in the persistence theory of scalar fields from the beginning \cite{Edelsbrunner_TopPersistenceSimp} and has developed over the years. In some cases, especially in discrete settings, it might be possible to simplify the field itself, e.g., via the cancellation of pairs as in \cite{Bauer_OptimalTopologicalSimplification_2012}. In other cases, simplifications are purely algebraic, such as the cancellation of close pairs from \cite{Robbins_SkeletonizationPartitioningDMT_2015}. While for a function the choice of cancelable pairs is usually driven by its values, in the vector field case such information is not available, making pairings non-obvious.
In \cite{Chazal_CompWellVectorFields,Wangetal_VisualizingRobustness2013}, critical points are proposed to be simplified when they belong to a region where the norm of the vector field is small.

A more recent theme has been to work directly on the chain complex level, where one may decompose the parametrized chain complex associated with the sublevel set filtration, see e.g. \cite{related_work_decomposition, bcw,memoli_et_al:LIPIcs.SoCG.2023.51}, rather than decomposing persistence modules.
Other sources have focused on simplifying filtered chain complexes without changing the persistence barcode \cite{MischaikowNandaMTforFiltrations2013,Ebli-Hacker-Maggs_MorseSignalCompression_2024}. 
Simplifications of chain complexes, as in algebraic Morse theory, have also proven useful in other applications \cite{SkoldbergMorseTheoryfromAlgView}. 

Based on these priors, we propose to adopt a simplification strategy at the chain complex level. Thus, we will work with parametrized chain complexes that have epimorphic internal maps coming from taking quotients, rather than the usual monomorphic internal maps coming from filtrations.

%Our contributions
In line with this program, after reviewing the necessary background in \Cref{sec:preliminaries}, our first contribution is the study of the category of factored chain complexes in \Cref{sec:tepch}. These are defined as tame epimorphic parametrized chain complexes, i.e.~functors that take real values to chain complexes, with internal maps all epimorphisms and that may fail to be isomorphisms at most at finitely many times. We show that objects in this category  can be decomposed into simple direct summands enumerated by a multiset of tagged real intervals, thus yielding a tagged barcode (see \Cref{sec:structure-thm}). After extending the standard interleaving and bottleneck distances to this setting, we prove the isometry theorem stating that the interleaving distance between two factored chain complexes is equal to the bottleneck distance between their tagged barcodes (see \Cref{sec:iso-thm-tepch}).

As a source of factored chain complexes, 
we give a general procedure that starts from a chain complex with chosen bases and weights on them, and constructs a factored chain complex via an algorithm that pairs basis elements according to their weights (see \Cref{sec:parametrize-ch}). We prove stability for this construction, in the sense that, if we start from two isomorphic based chain complexes, equipped with a generic set of weights, the corresponding factored chain complexes have tagged barcodes whose bottleneck distance is a continuous function of the weights (see \Cref{sec:stability}).

We exemplify how to derive a barcode of tagged intervals for a scalar field $f$, when $f$ is a Morse function, starting from its usual Morse complex generated by critical points. In this case, the weights are given by the differences in function values and the resulting tagged barcode can be related to the classical persistence barcode of $f$ (see \Cref{sec:scalar-fields}). 

To generalize from scalar fields to smooth vector fields, in \Cref{sec:barcode-of-vector-field} of this paper we confine ourselves to gradient-like Morse-Smale vector fields on a compact Riemannian manifold, to which we can assign a barcode of tagged intervals, using our construction. In this case, the weights are given by the distances between singular points of the field. 
Interestingly, a genericity condition on the weights is now needed, in the form of no two pairs of singular points having the same distance, to guarantee the uniqueness of the resulting tagged barcode (cf. \Cref{fig:breaking-ties}).  This marks a key difference with the gradient case, where it is not needed. 
This barcode gives a summary of the topological structure of the vector field, where each finite bar stands for a pair of fixed points that gets simplified. The bar spans two adjacent degrees corresponding to the indices of the paired points, with the length indicating the cost of the simplification. The infinite bars on the other hand are concentrated in only one degree, as they correspond to fixed points that do not get simplified and are related to the homology of the underlying manifold.

We show that the map taking a vector field to its tagged barcode is continuous, thus proving stability (see \Cref{sec:local-stab}). Moreover, two topologically equivalent vector fields with isometric singular sets have the same barcode, thus yielding invariance. 

Thanks to the generality of the approach, we can also apply it to combinatorial gradient-like vector fields~\cite{Forman_CombVectFieldsDynSyst}. As a bridge between the smooth and combinatorial settings, we show that the tagged barcode of a smooth vector field on a compact Riemannian manifold can be approximated arbitrarily well, in terms of bottleneck distance, by the tagged barcodes of combinatorial vector fields defined on sufficiently refined triangulations of the manifold (see \Cref{sec:comb-approx}).

In conclusion, this paper represents a proof of concept of a method to build a tagged barcode from any topological or combinatorial object that gives rise to a weighted and based chain complex, such as gradient-like vector fields. In perspective, it will be interesting to apply this method to more general vector fields also containing closed orbits, or even to higher-order tensor fields. 
However, already for vector fields with closed orbits, difficulties arise when trying to define invariants analogous to the Morse complex, as pointed out in \cite{AboutNonUniqueness2024}.
In the context of combinatorial Morse theory, an approach for the topological analysis of the dynamics of a not necessarily gradient-like vector field has been proposed in \cite{Dey-etal_ConnMatPersRed2024}, which uses connection matrices. 
By the generality of our approach, we could obtain tagged barcodes by applying our construction to such chain complexes as well, with an appropriate choice of weights. 

\tableofcontents

\section{Preliminaries}\label{sec:preliminaries}

In this section, we explain the necessary background material and fix our notation. We provide either proofs or references for all results.

We use the convention that $0$ is a natural number, i.e.~$\N = \{0,1,2,3,\ldots\}$. We fix an arbitrary field $\Fi$. For specific examples we will use the choice $\Fi=\Z/2\Z$.
We denote by $[0,\infty)$ the poset category of the totally ordered set of non-negative real numbers. We will often use the symbol $\one$ to denote the identity.

\subsection{Parametrized objects}\label{sec:par-obj}

Parametrized objects are a generalization of persistence modules and have been described in \cite{BubenikMetricsGeneralizedPersMod14} under the name of generalized persistence modules. This setting is convenient for us because it allows us to introduce some concepts in a higher generality so that we can then apply them in the case of parametrized vector spaces and also parametrized chain complexes. 

Let $\C$ be a category. A \textbf{parametrized object of $\C$} is a functor $X\colon [0,\infty) \to \C$. This means that for all $t \ge 0$ we have an object $X^t$ of $\C$ and for any pair $s\le t \in [0,\infty)$ we have a morphism $X^{s\le t}\colon X^s \to X^t$ in $\C$. These morphisms are called the \textbf{internal morphisms} (or \textbf{internal maps}) of $X$. The functoriality translates to the conditions that $X^{t\le t}$ is the identity on $X^t$ and whenever $0\le r\le s\le t$, then we have $X^{r\le t} = X^{s\le t} \circ X^{r\le s}$.
A \textbf{parametrized morphism} $\varphi\colon X \to Y$ between two parametrized objects $X,Y$ of $\C$ is a natural transformation of functors. This means that for all $t \ge 0$, $\varphi^t\colon X^t \to Y^t$ is a morphism in $\C$ and for $s\le t$ we have $\varphi^t \circ X^{s\le t} = Y^{s\le t} \circ \varphi^s$. 

By $\Vect$ we denote the category of finite dimensional vector spaces and linear maps and by $\Ch$ we denote the category of compact, non-negative chain complexes with coefficients in $\Fi$, where a chain complex $C_\bullet$ is called \textbf{non-negative}, if $C_i=0$ for $i<0$, and \textbf{compact}, if $\dim C_i < \infty$ for all $i$ and $C_i=0$ for all but finitely many $i$. 
In this paper, we assume all chain complexes to be compact, non-negative, and with field coefficients, so we will usually omit these adjectives. We will only consider parametrized objects in the categories $\Vect$, where we talk about parametrized vector spaces and parametrized (linear) maps, and $\Ch$, where we talk about parametrized chain complexes and parametrized (chain) maps.

\begin{definition}
    Let $X\colon [0,\infty) \to \C$ be a parametrized object of a category $\C$. We say that $X$ is 
    \begin{itemize}
        \item \textbf{left-constant at $t \in [0,\infty)$}, if either $t=0$ or there exists a real number $\eps>0$ such that for all $0\le \delta \le \eps$, the internal map $X^{t-\delta\le t}$ is an isomorphism,
        \item \textbf{right-constant at $t \in [0,\infty)$}, if there exists a real number $\eps>0$ such that for all $0\le \delta \le \eps$, the internal map $X^{t\le t+\delta}$ is an isomorphism,
        %\item \textbf{pointwise finite dimensional}, if $X^t_n$ is finite dimensional for all $t \in [0,\infty)$ and $n\in \N$.
        \item \textbf{tame}, if $X$ is right-constant at every $t \in [0,\infty)$, and left-constant everywhere but on a finite set.
        \item \textbf{epimorphic}, if all the internal maps are epimorphisms,
        \item \textbf{monomorphic}, if all the internal maps are monomorphisms.
    \end{itemize}
\end{definition}

We denote the category of parametrized objects and parametrized morphisms of $\C$ by $\text{P}\C$. We denote by $\text{TP}\C$ and $\text{TEP}\C$ the full subcategories of tame parametrized objects and tame epimorphic parametrized objects, respectively.
Note that a parametrized object $X$ is tame if and only if there exists a sequence 
\[
0 = t_0 < t_1 < \cdots < t_r < t_{r+1}=\infty
\]
such that, for any $i=0,\ldots, r$ and $t_i \le s\le t < t_{i+1}$, the map $X^{s\le t}$ is an isomorphism.
Since we are working mostly with the category of chain complexes, we introduce also some shorter terminology for that case: We say \textbf{filtered chain complex} for a tame monomorphic parametrized chain complex and \textbf{factored chain complex} for a tame epimorphic parametrized chain complex.

\paragraph{The generalized interleaving distance.}

Given a parametrized object $X\colon [0,\infty) \to \C$ and $\eps>0$, we define its \textbf{$\eps$-shift} as $X_\eps$, where $(X_\eps)^t := X^{t+\eps}$ and $(X_\eps)^{s\le t} := X^{s+\eps \le t+\eps}$. Given a parametrized morphism $\phi\colon X \to Y$, we define its \textbf{$\eps$-shift} as $\phi_\eps\colon X_\eps \to Y_\eps$, where $(\phi_\eps)^t := \phi^{t+\eps}$. 
Given two parametrized objects $X,Y \in \text{\normalfont P}\C$, an \textbf{$\eps$-interleaving} between $X$ and $Y$ is a pair $(\phi,\psi)$ of parametrized morphisms $\phi\colon X \to Y_\eps$ and $\psi\colon Y \to X_\eps$, such that $(\psi_\eps \circ \phi)^t = X^{t\le t+2\eps}$ and $(\phi_\eps \circ \psi)^t = Y^{t\le t+2\eps}$ for all $t \ge 0$.
If there exists an $\eps$-interleaving between $X$ and $Y$ we say that $X$ and $Y$ are \textbf{$\eps$-interleaved}. Note that if $\delta>\eps$ and $X$ and $Y$ are $\eps$-interleaved, than it follows that they are also $\delta$-interleaved. We say that $X$ and $Y$ are \textbf{interleaved} if there exists a real number $\eps\ge 0$ such that $X$ and $Y$ are $\eps$-interleaved.

Given two parametrized objects $X,Y \in \text{P}\C$, their \textbf{interleaving distance} is defined as
\[
d_I(X,Y) := \inf \{\eps >0 | \text{ $\exists$  $\eps$-interleaving between $X$ and $Y$}\},
\]
where we use the convention that $\inf \emptyset = \infty$. The interleaving distance is an extended pseudometric on $\text{\normalfont P}\C$, see \cite{BubenikMetricsGeneralizedPersMod14} for a proof. %in the categorical setting.

\paragraph{The generalized bottleneck distance.} 

Similar to the interleaving distance, also the bottleneck distance has been generalized \cite{IsometryThmGeneralized}. We give the basic definitions in large generality here but will later consider only two special cases: Multisets of intervals for the barcodes of parametrized vector spaces, and multisets of tagged intervals for the barcodes of factored chain complexes.

A \textbf{multiset} is a pair $\A=(A,m)$, where $A$ is a set and $m\colon A \to \N_{\ge1}$ is a function. For any element $I \in A$, the value $m(I)$ represents the multiplicity of $I$ in $\A$. Following \cite{BauerLesnick2014InducedMatchings}, we define the \textbf{representation} of $\A$ as the set
\[
\Rep(\A) := \{ (I,k) \in A\times \N_{\ge1} \mid m(I) \ge k \}. 
\]
Given a set $S$, we denote by $\Mult(S)$ the set of all multisets $(A,m)$ for which $A\subset S$.
We sometimes treat multisets on $S$ as if they were sets and leave it to the reader to make the necessary adjustments. Explicitly, this can usually be done by identifying all multisets with their representations.

Assume that $c\colon S\times S \to [0,\infty]$ is an extended pseudometric on $S$ and $W\colon S \to [0,\infty]$ is a function on $S$, such that $c$ and $W$ are \textbf{compatible}, which means that for all $s_1,s_2 \in S$ we have
\[
|W(s_1)-W(s_2)| \le c(s_1,s_2).
\]
Then we can define the corresponding bottleneck distance between multisets on $S$.
Given $\A,\B \in \Mult(S)$, and identifying $\A,\B$ with their representations, a \textbf{matching} between them is a subset $\M \subseteq \A \times \B$ such that for all $I \in \A$ there is at most one $J \in \B$ such that $(I,J) \in \M$ and for all $J' \in \B$ there is at most one $I' \in \A$ such that $(I',J') \in \M$. If $(I,J) \in \M$, then we say that $I$ and $J$ are \textbf{matched}, all other elements of $\A$ and $\B$ are called \textbf{unmatched}. The \textbf{cost} of the matching $\M$ is defined as
\[
\operatorname{cost}(\M) := \max \left\{ \sup_{(I,J) \in \M} c(I,J), \sup_{I \in \A \cup \B \text{ unmatched}} W(I) \right\}.
\]
Given $\eps>0$, we call $\M$ an \textbf{$\eps$-matching} if $\operatorname{cost}(\M)\le \eps$. The \textbf{bottleneck distance} between $\A$ and $\B$ is defined as
\begin{align*}
    d_B(\A,\B) &:= \inf \{\operatorname{cost}(\M) \mid \M \subseteq \A\times \B \text{ is a matching} \} \\
    &\hspace{3pt}= \inf \{\eps >0 \mid \text{$\exists$ $\eps$-matching between $\A$ and $\B$} \}.
\end{align*}

\begin{remark}\label{rem:bott-distance-ext-pseudometric}
    If we are given a set $S$, an extended pseudometric $c\colon S\times S \to [0,\infty]$, and a function $W\colon S \to [0,\infty]$, such that $c$ and $W$ are compatible, then the bottleneck distance $d_B$ is an extended pseudometric on $\Mult(S)$.
\end{remark}

\paragraph{Parametrized vector spaces.} 

Now we review the structure theorem and the isometry theorem from persistent homology in the language of parametrized vector spaces. This has two purposes. One is to motivate the analogous results for factored chain complexes and the other is that we want to use the results for parametrized vector spaces when proving the corresponding results for factored chain complexes.

In TDA literature, parametrized vector spaces are usually called \textbf{persistence modules}. Often another totally ordered set is used instead of $[0,\infty)$. Popular choices are $\R$, $\Z$, $\N$, or finite totally ordered sets. This choice of poset leads to small technical differences, but the overall flavour of the resulting theory is the same.

In this paper, by an \textbf{interval} we always mean a half-open interval of non-negative real numbers, i.e.~a subset $I\subseteq [0,\infty)$ of the form $I=[s,t)$ for some $0\le s < t\le \infty$. We denote the set of all intervals by $\I$. Now we define the interval functors in $\TPVect$.

\begin{definition}
    Given $0\le s < t \le \infty$, we define the \textbf{interval functor in $\TPVect$}, denoted $\Fi[s,t)$, to be the parametrized vector space whose vector spaces and internal maps are given by
    \[
    \Fi[s,t)^r := \begin{cases}
        \Fi, \text{ if } s\le r < t, \\
        0, \text{ otherwise},
    \end{cases}
    \qquad 
    \Fi[s,t)^{q\le r} := \begin{cases}
        \one_\Fi, \text{ if } s\le q\le r < t, \\
        0, \text{ otherwise}.
    \end{cases}
    \]
\end{definition}

We state the structure theorem in $\TPVect$. A proof can be found in \cite{OudotPersistenceTheoryBook15}.

\begin{theorem}[Structure theorem in $\TPVect$]\label{thm:str-thm-vect}
    Any tame parametrized vector space $V$ is isomorphic to a finite direct sum of interval functors in $\TPVect$, i.e.~there exists a unique finite multiset $\Barc =\Barc(V)\in \Mult(\I)$ of intervals such that 
    \[
    V \cong \bigoplus_{[s,t) \in \Barc} \Fi[s,t).
    \]
    The multiset $\Barc$ is called the \textbf{persistence barcode} of $V$.
\end{theorem}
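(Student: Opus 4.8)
The plan is to reduce the statement to a classical fact of linear algebra --- the decomposition of a finite sequence of finite-dimensional vector spaces and linear maps into interval pieces --- and then to transport this decomposition, together with its uniqueness, back to $\TPVect$ by means of tameness.

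\emph{Reduction to a finite model.} First I would use the characterization of tameness recalled above to choose $0 = t_0 < t_1 < \cdots < t_r < t_{r+1} = \infty$ such that $V^{s\le t}$ is an isomorphism whenever $t_i \le s \le t < t_{i+1}$. Then $V$ is determined, up to isomorphism, by the finite diagram
\[
V^{t_0} \xrightarrow{\ V^{t_0\le t_1}\ } V^{t_1} \xrightarrow{\ V^{t_1\le t_2}\ } \cdots \xrightarrow{\ V^{t_{r-1}\le t_r}\ } V^{t_r}
\]
of finite-dimensional vector spaces, i.e.\ by a representation of the linearly oriented quiver $A_{r+1}$. Conversely, any such representation is realized by the parametrized vector space that is constant on each half-open interval $[t_i,t_{i+1})$ and uses the given maps to realize the jumps at $t_1,\dots,t_r$; this is right-constant everywhere and left-constant off $\{t_1,\dots,t_r\}$, hence tame. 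Since this correspondence respects direct sums, it suffices to decompose the finite diagram.

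\emph{Decomposition of the finite model.} Next I would show, by induction on $r$, that every representation $W_0 \to W_1 \to \cdots \to W_r$ of $A_{r+1}$ splits as a direct sum of interval representations $Q[i,j]$, where $Q[i,j]$ equals $\Fi$ in each position $i\le k\le j$, has identity maps between consecutive copies, and is $0$ elsewhere. The inductive step can be carried out by choosing bases of $W_0,\dots,W_r$ simultaneously so that each map sends every basis vector either to a basis vector or to $0$; this can be arranged by a block Gaussian elimination adapted to the flags $\im(W_0\to W_k)\subseteq\cdots\subseteq\im(W_{k-1}\to W_k)\subseteq W_k$ and the kernel of $W_k\to W_{k+1}$. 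Under the correspondence above, $Q[i,j]$ is sent to the interval functor $\Fi[t_i,t_{j+1})$, with the convention $t_{r+1}=\infty$; collecting summands yields a finite multiset $\Barc\in\Mult(\I)$ with $V\cong\bigoplus_{[s,t)\in\Barc}\Fi[s,t)$. I expect this finite-dimensional decomposition to be the core of the argument; the remaining work is routine bookkeeping about half-open endpoints.

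\emph{Uniqueness.} Finally, I would establish uniqueness by observing that the multiplicities of $\Barc$ are intrinsic to $V$. For $q\le r$ in $[0,\infty)$, the internal map $\Fi[s,t)^{q\le r}$ is an isomorphism when $s\le q$ and $r<t$ and is zero otherwise, so $\dim\im(V^{q\le r})$ counts the intervals $[s,t)\in\Barc$ with $s\le q$ and $r<t$; a standard inclusion--exclusion over a nested pair of such constraints then isolates the multiplicity of each individual interval. As these dimensions depend only on the isomorphism type of $V$, the multiset $\Barc$ is unique. Alternatively one may invoke the Krull--Schmidt theorem applied to the finite model, noting that every interval representation $Q[i,j]$ has endomorphism ring $\Fi$, which is local.
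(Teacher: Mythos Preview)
The paper does not actually give its own proof of this theorem; it simply cites \cite{OudotPersistenceTheoryBook15} and moves on. Your proposal is correct and follows the standard route found in that reference: tameness reduces $V$ to a representation of a linearly oriented $A_{r+1}$ quiver, Gabriel's theorem (or the direct basis-normalization argument you sketch) decomposes that into interval summands, and uniqueness follows either from the rank invariant via inclusion--exclusion or from Krull--Schmidt, exactly as you indicate.
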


Let us now recall the isometry theorem in $\TPVect$, which is another classical result in TDA. For this, we first introduce the interleaving distance and the bottleneck distance for tame parametrized vector spaces. The former is defined by applying the more general definition given above to the case of the category $\C=\Vect$. For the latter, we need to define a pseudometric and a weight function for multisets of intervals. 
Given two intervals $I=[s,t)$ and $I'=[s',t')$, we define the \textbf{cost of matching $I$ to $I'$} as $c(I,I') := \max\{|s-s'|,|t-t'|\}$. The \textbf{weight of $I$} is defined as $W(I):=\frac{t-s}{2}$.
One can check that $c$ defines an extended metric on multisets of intervals and that $c$ and $W$ are compatible. We thus get a bottleneck distance $d_B$ on multisets of intervals by \Cref{rem:bott-distance-ext-pseudometric}. The following theorem is a classical result in TDA, a proof can be found in \cite{OudotPersistenceTheoryBook15}.

\begin{theorem}[Isometry Theorem in $\TPVect$] \label{thm:iso-thm-tpvect}
    If $V$ and $W$ are tame parametrized vector spaces, then
    \[
    d_I(V,W) = d_B(\Barc(V),\Barc(W)).
    \]
\end{theorem}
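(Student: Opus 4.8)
The plan is to prove the two inequalities $d_I(V,W)\le d_B(\Barc(V),\Barc(W))$ and $d_B(\Barc(V),\Barc(W))\le d_I(V,W)$ separately. By the structure theorem (\Cref{thm:str-thm-vect}) we may identify $V$ and $W$ with the direct sums of interval functors indexed by $\Barc(V)$ and $\Barc(W)$, and throughout I would use the elementary fact that interleavings add up: if $V=\bigoplus_i V_i$ and $W=\bigoplus_i W_i$ and $V_i$ is $\eps$-interleaved with $W_i$ for every $i$, then the direct sums of the interleaving morphisms exhibit an $\eps$-interleaving between $V$ and $W$.

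For the inequality $d_I\le d_B$, fix $\eps>d_B(\Barc(V),\Barc(W))$ and choose an $\eps$-matching $\M$ between $\Barc(V)$ and $\Barc(W)$. I would first record two explicit computations. (i) If $I=[s,t)$ and $I'=[s',t')$ satisfy $\max\{|s-s'|,|t-t'|\}\le\eps$, then $\Fi[s,t)$ and $\Fi[s',t')$ are $\eps$-interleaved; the interleaving morphisms are the identity of $\Fi$ at every degree where source and target are both one-dimensional and are zero otherwise, and the inequalities $|s-s'|\le\eps$, $|t-t'|\le\eps$ are exactly what is needed for these to be parametrized maps whose composites recover the internal maps $\Fi[s,t)^{r\le r+2\eps}$ and $\Fi[s',t')^{r\le r+2\eps}$. (ii) If $I=[s,t)$ satisfies $t-s\le 2\eps$, then $\Fi[s,t)$ is $\eps$-interleaved with the zero module, since the only possible composites are zero and $\Fi[s,t)^{r\le r+2\eps}=0$ for every $r$ precisely when $t-s\le 2\eps$. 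Because $\operatorname{cost}(\M)\le\eps$, every matched pair $(I,I')$ falls under (i) and every unmatched interval of $\Barc(V)\cup\Barc(W)$ has weight $\le\eps$, hence falls under (ii); summing these interleavings over the matched pairs and over the unmatched intervals produces an $\eps$-interleaving between $V$ and $W$. Letting $\eps$ decrease to $d_B(\Barc(V),\Barc(W))$ gives the claim.

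The reverse inequality $d_B\le d_I$ is the algebraic stability theorem, and this is the step I expect to be the main obstacle. Fix $\eps>d_I(V,W)$ and an $\eps$-interleaving $(\phi,\psi)$ between $V$ and $W$; the task is to manufacture from it an $\eps$-matching of barcodes. I would follow the rectangle-counting route. For a rectangle $R=[a,b)\times[c,d)$ with $b\le c$, let $\mu_V(R)$ be the number of intervals $[s,t)\in\Barc(V)$ with $a\le s<b$ and $c\le t<d$; this is a signed sum of ranks of internal maps of $V$ by inclusion--exclusion. Pushing bases of the relevant subquotients across $\phi$ and $\psi$ and using the defining relations $(\psi_\eps\circ\phi)^r=V^{r\le r+2\eps}$ and $(\phi_\eps\circ\psi)^r=W^{r\le r+2\eps}$ to control kernels and cokernels, one shows $\mu_V(R)\le\mu_W(R^\eps)$ and $\mu_W(R)\le\mu_V(R^\eps)$, where $R^\eps$ is the rectangle obtained by enlarging $R$ by $\eps$ in each defining coordinate. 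These inequalities are the hypotheses of a Hall-type combinatorial matching lemma --- applicable because tameness confines all endpoints of $\Barc(V)$ and $\Barc(W)$ to a common finite set --- which produces a matching sending each $[s,t)\in\Barc(V)$ either to some $[s',t')\in\Barc(W)$ with $\max\{|s-s'|,|t-t'|\}\le\eps$ or to nothing provided $t-s\le 2\eps$, and symmetrically for the unmatched intervals of $\Barc(W)$; such a matching has cost at most $\eps$. Letting $\eps$ decrease to $d_I(V,W)$ finishes the proof.

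Beyond the rectangle bookkeeping, the points requiring care are: tameness is exactly what makes the relevant endpoint set finite, so the combinatorial lemma applies; the half-open convention and intervals with right endpoint $\infty$ must be tracked throughout (an interval $[s,\infty)$ can only be matched to another interval of the form $[s',\infty)$); and, since neither infimum defining $d_I$ nor $d_B$ need be attained, both directions are run for arbitrary $\eps$ strictly above the relevant distance and the conclusion is obtained by letting $\eps$ decrease to it. An alternative to the rectangle-measure step is the induced-matching construction: factor each of $\phi$ and $\psi$ through its image, decompose the images into interval functors, and match intervals of $\Barc(V)$ and $\Barc(W)$ according to how they meet these images; this reorganizes but does not remove the core difficulty, which remains the passage from the algebraic interleaving to a combinatorial matching of the barcodes.
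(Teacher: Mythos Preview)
The paper does not actually prove this theorem: it is stated in the preliminaries as a classical result in TDA, with a proof deferred to \cite{OudotPersistenceTheoryBook15}. There is therefore no ``paper's own proof'' to compare against.

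Your sketch is a reasonable outline of one of the standard arguments. The inequality $d_I\le d_B$ is handled correctly: the explicit interleaving between matched interval modules and the trivial interleaving with zero for short unmatched intervals are exactly the right ingredients, and summing them works because interleavings are preserved under direct sums. For the converse inequality you describe the persistence-measure (box-lemma) route leading into a Hall-type matching argument; this is essentially the approach taken in the reference the paper cites, so your proposal is in line with what a reader following the citation would find. The alternative you mention at the end --- induced matchings via image factorisation --- is the Bauer--Lesnick approach and is equally valid. Either route would need more detail to be a complete proof (in particular the inclusion--exclusion identity expressing $\mu_V(R)$ in terms of ranks, and the precise form of the Hall-type lemma handling intervals of weight $\le\eps$), but as a plan it is sound and nothing you wrote is wrong.
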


\paragraph{Decomposition of chain complexes.}

If we have not explicitly assigned a symbol to the differential in a chain complex $C$, then we write $\partial_n^C$ for the differential in degree $n$. If the degree and/or the chain complex are clear from the context, then we sometimes write only $\partial^C$ or $\partial_n$ or even just $\partial$ instead.

Given $n\ge 1$, the \textbf{$n$-disk of $\Ch$} is the chain complex $D^n = (D^n_\bullet, \partial_\bullet)$ with
\[
    D^n_k = \begin{cases}
        \Fi, \text{ if } k=n,n-1, \\
        0, \text{ otherwise},
    \end{cases}
    \quad \text{and} \qquad
    \partial_k = \begin{cases}
        \one_\Fi, \text{ if } k=n, \\
        0, \text{ otherwise},
    \end{cases}
    \quad \text{for all } k \in \Z.
\]

Given $n\ge 0$, the \textbf{$n$-sphere of $\Ch$} is defined as the chain complex $S^n = (S^n_\bullet, \partial_\bullet)$ with
    \[
    S^n_k = \begin{cases}
        \Fi, \text{ if } k=n, \\
        0, \text{ otherwise},
    \end{cases}
    \quad \text{and} \qquad 
    \partial_k = 0, 
    \quad \text{ for all }k \in \Z.
    \]

It is known (see e.g. \cite{weibel1994introduction}) that every chain complex $X \in \Ch$ can be written as a finite direct sum of disks and spheres in a unique way up to isomorphism, since we are working with field coefficients. In particular, 
    \begin{itemize}
        \item $\#$ of $n$-spheres $=$ $\dim(H_n(X))$,
        \item $\#$ of $n$-disks $=$ $\dim(\operatorname{im}(\partial_n\colon X_n \to X_{n-1}))$.
    \end{itemize}

Given a chain complex $C_\bullet$ and a basis $\B=\{b_1,\ldots,b_r\}$ for $C_{k-1}$, we denote by $\langle\argdot,\argdot\rangle$ the scalar product on $C_{k-1}$ induced by the basis $\B$. If we are given an element $a \in C_k$ and a basis element $b_j \in \B$, writing $\langle\partial a,b_j\rangle\neq 0$ hence means that $\partial a = \lambda_1 b_1 + \cdots + \lambda_r b_r$ with $\lambda_j \neq 0$, i.e. $b_j$ appears in the boundary of $a$. 
Given a vector space $V$ and $x \in V$, we denote by $\Span(x)$ the linear subspace of $V$ generated by $x$. 

Later, we will need the following result, which already appears in the literature in different forms (see, e.g., \cite{HomologyComputation1998,GallaisCombinatorial2010}). 

\begin{lemma} \label{lem:chain-complex-simplification} 
    If $C_\bullet$ is a chain complex and $a \in C_n$ with $\partial a \neq 0$, then the sequence  $\overline{C}_\bullet$ of linear maps
    \begin{center}
    \begin{tikzcd}
        \cdots \ar[r] &C_{n+1} \ar[r] &C_n/\Span(a) \ar[r] &C_{n-1}/\Span(\partial a) \ar[r] &C_{n-2} \ar[r] &\cdots
    \end{tikzcd}
    \end{center}
    naturally induced by the differentials in $C_\bullet$ is a chain complex in $\Ch$.
    Moreover, there exists an epimorphism $q_a\colon C_\bullet \to \overline{C}_\bullet$,
    \begin{center}
    \begin{tikzcd}
        \cdots \ar[r] &C_{n+1} \ar[r] \ar[d] &C_n \ar[r] \ar[d] &C_{n-1} \ar[r] \ar[d] &C_{n-2} \ar[r] \ar[d] &\cdots \\
        \cdots \ar[r] &C_{n+1} \ar[r] &C_n/\Span(a) \ar[r] &C_{n-1}/\Span(\partial a) \ar[r] &C_{n-2} \ar[r] &\cdots,
    \end{tikzcd}
    \end{center}
    where the vertical maps are either the identity maps or quotient maps. Moreover, assume that we are given a basis $\B_k$ for each $C_k$, such that $a \in \B_n$. Further, we are given $b \in \B_{n-1}$ such that $\lambda:= \langle \partial a,b \rangle \neq 0$. 
    Then, defining
    \[
    \overline{\B}_n = \{ [a'] \mid a' \in \B_n \setminus \{a\} \}, \qquad
    \overline{\B}_{n-1} = \{ [b'] \mid b' \in \B_{n-1} \setminus \{b\} \}, \qquad \overline{\B}_k = \B_k \text{ for $k \neq n, n-1$},
    \]
    yields bases $\overline{\B}_k$ for the vector spaces $\overline{C}_k$. If, for all $k$, we denote by $M_k$ the matrix that represents $\partial^C_k\colon C_k \to C_{k-1}$ with respect to the bases $\B_k$ and $\B_{k-1}$ and by $\overline{M}_k$ the matrix that represents $\partial^{\overline{C}}_k\colon \overline{C}_k \to \overline{C}_{k-1}$ with respect to the bases $\overline{\B}_k$ and $\overline{\B}_{k-1}$, then:
    \begin{enumerate}[(i)]
        \item $\overline{M}_k=M_k$ for $k\ge n+2$ and for $k\le n-2$.\label{item:chain-complex-simplification-1}
        \item $\overline{M}_{n+1}$ is obtained from $M_{n+1}$ by deleting the row corresponding to $a\in \B_n$.\label{item:chain-complex-simplification-2}
        \item To obtain $\overline{M}_n$ from $M_n$, multiply the row of $b$ by ${\langle\partial a,b \rangle}^{-1}$, and then subtract from  the row of every $b'\in \B_{n-1}$,   the row of $b$ multiplied by $\langle \partial a,b'\rangle$. Finally, delete the column corresponding to $a\in \B_n$ and the row corresponding to $b\in \B_{n-1}$.\label{item:chain-complex-simplification-3}
        \item $\overline{M}_{n-1}$ is obtained from $M_{n-1}$ by deleting the column corresponding to $b \in \B_{n-1}$.\label{item:chain-complex-simplification-4}
    \end{enumerate}
\end{lemma}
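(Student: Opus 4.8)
The plan is to build the quotient complex $\overline{C}_\bullet$ explicitly, exhibit the chain map $q$, and then read off the four matrix formulas by carefully tracking what the quotient maps do to basis elements. First I would check that the naive definition of $\overline{C}_\bullet$ — quotienting $C_n$ by $\Span(a)$, quotienting $C_{n-1}$ by $\Span(\partial a)$, and leaving all other degrees untouched — actually produces well-defined differentials. The only places where something can go wrong are the maps into and out of the two modified degrees. For $\partial_{n+1}\colon C_{n+1}\to C_n$ followed by the quotient $C_n \to C_n/\Span(a)$: this is automatically well-defined, but I must verify that its composite with $C_n/\Span(a) \to C_{n-1}/\Span(\partial a)$ is still zero, which holds because $\partial_n \circ \partial_{n+1} = 0$ already in $C_\bullet$. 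For $\overline{\partial}_n\colon C_n/\Span(a) \to C_{n-1}/\Span(\partial a)$: this is well-defined precisely because $\partial_n(\Span(a)) = \Span(\partial a)$, which lands in the submodule we quotiented out; and its composite with $\partial_{n-1}\colon C_{n-1} \to C_{n-2}$ descends because $\partial_{n-1}(\Span(\partial a)) = \Span(\partial_{n-1}\partial_n a) = 0$. So $\overline{C}_\bullet$ is a genuine chain complex, and it is compact and non-negative since $C_\bullet$ is and we only ever decreased dimensions. The chain map $q$ is then the identity in every degree except $n$ and $n-1$, where it is the canonical quotient map; naturality is the statement that quotient maps commute with the induced differentials, which is immediate from the construction. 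Each $q_k$ is surjective, so $q$ is epimorphic.

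Next I would verify that the proposed sets $\overline{\B}_k$ are bases. For $k \neq n, n-1$ this is trivial since nothing changed. For $\overline{\B}_{n-1} = \{[b'] \mid b' \in \B_{n-1}\setminus\{b\}\}$: since $\langle \partial a, b\rangle \neq 0$, the element $\partial a$ has nonzero $b$-coordinate, so $\Span(\partial a)$ is a complement to $\Span(\{b' : b' \neq b\})$ inside $C_{n-1}$ — equivalently, the images $[b']$ span $C_{n-1}/\Span(\partial a)$ and, by dimension count, form a basis. For $\overline{\B}_n = \{[a'] \mid a' \in \B_n \setminus \{a\}\}$: since $a$ itself is a basis element of $C_n$, $\Span(a)$ is spanned by a basis vector, and the remaining basis vectors descend to a basis of the quotient. (There is a small typo to fix in the statement: $\overline{\B}_{n-1}$ should read $\{[b'] \mid b' \in \B_{n-1}\setminus\{b\}\}$.)

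The final and most substantial part is the four matrix identities, which I would prove degree by degree by composing $\overline{\partial}_k$ with the appropriate identifications. Items \eqref{item:chain-complex-simplification-1}, \eqref{item:chain-complex-simplification-2}, \eqref{item:chain-complex-simplification-4} are bookkeeping: for $k \geq n+2$ and $k \leq n-2$ the differential and both bases are literally unchanged; for $\overline{\partial}_{n+1} = q_n \circ \partial_{n+1}$, writing $\partial_{n+1} b' = \sum_{a'\in\B_n} \mu_{a'} b'$-style expansions and applying $q_n$ simply kills the $a$-component, i.e. deletes the $a$-row; for $\overline{\partial}_{n-1}$, the source basis is unchanged and the target quotient map, expressed in the basis $\overline{\B}_{n-1}$, is recording the same coordinates with the $b$-coordinate dropped, i.e. deleting the $b$-column. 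The real computation is item \eqref{item:chain-complex-simplification-3}, describing $\overline{\partial}_n\colon C_n/\Span(a) \to C_{n-1}/\Span(\partial a)$. Here the subtlety is that the map $C_{n-1} \to C_{n-1}/\Span(\partial a)$ is \emph{not} just "delete the $b$-coordinate": to express a class $[c]$ in the basis $\overline{\B}_{n-1}$ one must first subtract the unique multiple of $\partial a$ that cancels the $b$-coordinate of $c$, and only then read off the surviving coordinates. Concretely, if $c = \sum_{b'} \gamma_{b'} b'$, then $[c] = \sum_{b' \neq b} \bigl(\gamma_{b'} - \gamma_b \langle \partial a, b\rangle^{-1} \langle \partial a, b'\rangle\bigr)[b']$. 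Applying this to each column $c = \partial_n a'$ of $M_n$ (for $a' \neq a$) produces exactly the prescribed row operations: normalize the $b$-row by $\langle \partial a, b\rangle^{-1}$, subtract $\langle \partial a, b'\rangle$ times it from the $b'$-row for every $b'$, then delete the $a$-column and the $b$-row. I expect this column-by-column unwinding of the two successive change-of-coordinates (deleting $\Span(a)$ on the source, and the affine correction-then-truncation on the target) to be the one genuinely error-prone step; everything else is diagram-chasing and dimension counting.
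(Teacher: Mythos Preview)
Your proposal is correct and complete. The paper does not actually give a proof of this lemma; it states that the result is well-known and cites \cite{HomologyComputation1998}. Your argument is the standard direct verification and would serve as a full proof. One small slip in wording: in your treatment of item~\eqref{item:chain-complex-simplification-4} you write ``the source basis is unchanged,'' but it is the \emph{target} basis $\B_{n-2}$ that is unchanged while the source basis is $\overline{\B}_{n-1}$; the correct reason $\overline{M}_{n-1}$ is obtained by deleting the $b$-column is simply that $\overline{\partial}_{n-1}[b'] = \partial_{n-1} b'$ for each $b' \neq b$, so each surviving column is literally the old column. Your computation for item~\eqref{item:chain-complex-simplification-3} is exactly right.
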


Using the epimorphic chain map $q_a\colon C_\bullet \to \overline{C}_\bullet$ from \Cref{lem:chain-complex-simplification}, we get the following result, the content of which is not new as well (see, e.g. \cite[Section 1.4]{weibel1994introduction}).

\begin{lemma}\label{lem:chain-complex-ses-splits}
    Let $C_\bullet$ be a chain complex and let $\B_k$ be a basis of $C_k$, for all $k$. Let $a \in \B_n$ with $\partial a \neq 0$. Then, there is a short exact sequence of chain complexes
    \begin{center}
        \begin{tikzcd}
            0 \ar[r] &D^n \ar[r,"\iota"] &C_\bullet \ar[r,"q"] &\overline{C}_\bullet \ar[r] &0,
        \end{tikzcd}
    \end{center}
    where $q=q_a$ from \Cref{lem:chain-complex-simplification} and $\iota$ is defined by mapping the generator $1_n \in D^n_n$ to $a \in C_n$ and the generator $1_{n-1} \in D^n_{n-1}$ to $\partial a \in C_{n-1}$. Moreover, the sequence splits.
\end{lemma}

\begin{proof}
    It is straightforward to check that $\iota$ is a monomorphic chain map whose image is equal to $\ker(q)$, thus yielding a short exact sequence. To see that the sequence splits, we define a left inverse $\psi\colon C_\bullet \to D^n$ as follows. Define $\psi_k=0$ for all $k\neq n,n-1$. Choose $b \in \B_{n-1}$ such that $\lambda:= \langle \partial a,b \rangle \neq 0$ and define $\psi_{n-1}(b)= \lambda^{-1}\cdot 1_{n-1}$ and $\psi_{n-1}(b')=0$ for all other basis elements $b'\in \B_{n-1}$, and extend linearly to any $C_{n-1}$. For any $a'\in \B_n$, let $\lambda':= \langle \partial a',b\rangle$ and define $\psi_n(a') = \lambda' \cdot \lambda^{-1}\cdot 1_n$, extend linearly to all of $C_n$.
    Direct calculations show that $\psi$ is indeed a chain map and that $\psi\circ \iota$ is the identity on $D^n$. 
\end{proof}

\subsection{Dynamical systems}

Given a closed smooth manifold $M$, we denote by $\X(M)$ the set of smooth vector fields on $M$. By $\X^1(M)$ we denote the topological space with underlying set $\X(M)$ endowed with the Whitney $C^1$-topology. See \Cref{sec:gen-pos} for the definition of this topology. 

In this subsection we fix a vector field $v\in \X(M)$ and consider the dynamics of the flow generated by $v$. A standard reference for the definitions and results presented here is \cite{palis2012geometric}. We define everything with a subscript $v$, but this will be dropped when it is clear from the context which vector field we are considering.
We write $\phi_v\colon \R \times M \to M$ for the corresponding flow. 
Given any point $p\in M$, the \textbf{orbit} of $p$ (w.r.t.~$v$) is the set
\[
\Oo_v(p) := \phi_v(\R,p) = \{ \phi_v(t,p) \mid t \in \R \}.
\]
A \textbf{fixed point} (or \textbf{singular point}) of $v$ is a point $p\in M$ such that $\phi_v(t,p)=p$ for all $t\in \R$, i.e. $\Oo_v(p)=\{p\}$. This is equivalent to $v(p)=0$. We denote by $\Sing(v)$ the set of all singular points of $v$.

A point $p\in M$ is called \textbf{chain-recurrent} (w.r.t.~$v$) if for all $T>0$ and $\eps>0$ there exist $x_0,x_1,\ldots,x_m \in M$, with $x_0=x_m=p$, such that for all $i=0,\ldots,m-1$ there exists $t_i>T$ with $d_M(\phi(t_i,x_i),x_{i+1}) \le \eps$. Clearly all fixed points are chain-recurrent.

Given a fixed point $p$ of $v$,  its \textbf{stable manifold} and its \textbf{unstable manifold} are defined by
\begin{align*}
    W^s_v(p) &:= \{q \in M \mid \phi_v(t,q) \to p \text{ as } t \to \infty \}, \\
    W^u_v(p) &:= \{q \in M \mid \phi_v(t,q) \to p \text{ as } t \to -\infty \}.
\end{align*}

A fixed point $p \in M$ is called \textbf{hyperbolic} (w.r.t.~$v$) if there exist two subspaces $E^s_p,E^u_p \subseteq T_pM$ such that $T_pM = E^s_p \oplus E^u_p$ and there exist $0< \lambda < 1$ and $C>0$ such that for all $t\ge 0$ we have
\[
|D\phi^t_v(p) x| \le C \lambda^t |x|
\qquad \text{and} \qquad
|D\phi^{-t}_v(p)y| \le C \lambda^t |y|,
\]
where $x \in E^s_p$ and $y \in E^u_p$. The two subspaces $E^s_p$ and $E^u_p$ are uniquely determined, namely they are the tangent spaces of the stable and unstable manifold of $p$, respectively. The dimension of $E^u_p$ is called the \textbf{index} of $p$ and denoted by $\ind_v(p)$.
For a proof of the following theorem, see \cite{palis2012geometric} and the references therein.

\begin{theorem}
    The stable and unstable manifolds of hyperbolic fixed points are injectively immersed submanifolds of $M$.
\end{theorem}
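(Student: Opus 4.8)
This is the classical stable/unstable manifold theorem for flows, so the plan is to reduce it to the \emph{local} stable manifold theorem near $p$ and then globalize by flowing.

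First I would pass from the flow to the time-one diffeomorphism $f := \phi_v^1$. Since a forward $\phi_v$-orbit converges to $p$ exactly when the corresponding $f$-orbit does (convergence near a hyperbolic fixed point being exponential, and orbit segments over $[n,n+1]$ collapsing to $p$ by continuity), we have $W^s_v(p) = \{q \in M : f^n(q)\to p\}$, so it suffices to work with $f$. Because $Df(p)$ is hyperbolic, with $T_pM = E^s_p \oplus E^u_p$ and the contracting part of dimension $m := \dim E^s_p$, the target of this first stage is: a neighbourhood $U$ of $p$ such that $W^s_{\mathrm{loc}}(p) := \{q\in U : f^n(q)\in U\ \forall n\ge 0,\ f^n(q)\to p\}$ is a $C^r$ embedded $m$-disk through $p$ tangent to $E^s_p$, where $r$ is the differentiability class of $v$.

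For the local theorem I would work in a chart at $p$, write $f(x)=Ax+g(x)$ with $A=Df(p)$, $g(0)=0$, $Dg(0)=0$, rescale the chart so that $g$ is Lipschitz-small on a ball $U$, and seek $W^s_{\mathrm{loc}}(p)$ as a graph $\{(u,\gamma(u))\}$ over $E^s_p$. Then I would either run the Hadamard graph transform (the induced map on Lipschitz graphs tangent to $E^s_p$ is a $C^0$-contraction) or the Lyapunov--Perron method (a contraction on a Banach space of exponentially decaying forward sequences with prescribed stable coordinate); either produces a unique such $\gamma$, a forward-invariant graph all of whose points converge exponentially to $p$, and conversely captures every point of $U$ with bounded forward orbit. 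Tangency $D\gamma(0)=0$ and $C^r$-regularity of $\gamma$ I would then obtain from the fibre-contraction theorem (equivalently the $C^r$ section theorem), bootstrapping one derivative at a time. I expect this to be the main obstacle: the existence and continuity part is a routine contraction, but the smoothness and tangency statements are the technical heart.

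To globalize I would observe that $W^s_{\mathrm{loc}}(p)$ is forward invariant, so $\bigl(f^{-n}(W^s_{\mathrm{loc}}(p))\bigr)_{n\ge 0}$ is an increasing chain of $C^r$ embedded $m$-disks, each open in the next (each $f^{-n}$ being a diffeomorphism of $M$). One checks $W^s_v(p)=\bigcup_{n\ge 0} f^{-n}(W^s_{\mathrm{loc}}(p))$: the inclusion "$\supseteq$" is clear, and "$\subseteq$" holds because $f^n(q)\to p$ forces $f^N(q)\in W^s_{\mathrm{loc}}(p)$ for some $N$. Endowing $W^s_v(p)$ with the direct-limit $C^r$-structure of this countable chain makes it an $m$-manifold for which the inclusion into $M$ is an injective immersion (injective as the inclusion of a subset, an immersion because it is one on each stage), in general not an embedding since $W^s_v(p)$ can accumulate on itself. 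Finally, for $W^u_v(p)$ I would apply everything to the reversed field $-v$, for which $p$ is hyperbolic with stable subspace $E^u_p$ and $W^u_v(p)=W^s_{-v}(p)$, yielding an injectively immersed submanifold of dimension $\ind_v(p)$.
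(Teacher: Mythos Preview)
Your proposal is correct and follows the standard textbook route to the stable/unstable manifold theorem. The paper, however, does not give its own proof of this statement at all: it simply cites \cite{palis2012geometric} and the references therein, treating the result as classical background. The argument you sketch---local stable manifold via graph transform or Lyapunov--Perron, then globalization by flowing backward---is precisely the approach carried out in that reference, so there is nothing to compare beyond noting that you have supplied what the paper deliberately omits.
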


\subsection{Structural stability and Morse-Smale vector fields}

Two injectively immersed submanifolds $K,N \subseteq M$ are said to \textbf{intersect transversally}, if for every point $p\in K \cap N$ we have $T_pM = T_pK + T_p N$. 
Note that if $\dim(K)+\dim(N) < \dim(M)$, then $K$ and $N$ intersect transversally if and only if $K\cap N = \emptyset$. 

\begin{definition}
    A vector field $v \in \X(M)$ is called a \textbf{gradient-like Morse-Smale vector field} if it satisfies the following conditions:
    \begin{enumerate}[$(i)$]
        \item The set of chain-recurrent points consists of finitely many fixed points, all of which are hyperbolic.
        \item The stable and unstable manifolds of fixed points intersect transversally.
    \end{enumerate}
    We denote by $\X_{gMS}(M)$ the set of gradient-like Morse-Smale vector fields on $M$.
\end{definition}

\begin{remark}
    Note that a gradient-like Morse-Smale vector field is not necessarily the gradient of a function. Some authors say that a vector field $v$ is gradient-like with respect to a function $f$ if $\Sing(v)=\Crit(f)$ and $Df(p)[v(p)]<0$ for all $p\in M\setminus \Sing(v)$. Note that such a function $f$ is not unique. With respect to any Riemannian metric on $M$ this is equivalent to $\langle v(p),-\nabla f(p) \rangle >0$, i.e. the vector fields $v$ and $-\nabla f$ point in the same half-space. Such a function indeed exists for every gradient-like Morse-Smale vector field (see \cite{Smale_GradientDS_61}), but $v$ is not guaranteed to coincide with the gradient of any such $f$.  
\end{remark}

Two vector fields $v,w \in \X(M)$ are \textbf{topologically equivalent} if there exists a homeomorphism $h\colon M\to M$ that maps the orbits of $v$ to the orbits of $w$, preserving their orientations. The homeomorphism $h$ is called a \textbf{topological equivalence} between $v$ and $w$.
A vector field $v \in \X(M)$ is \textbf{structurally stable}, if for every $\eps>0$ there exists a neighbourhood $\Nn$ of $v$ in $\X^1(M)$ such that for all $w \in \Nn$ there exists a topological equivalence $\varphi\colon M \to M$ between $v$ and $w$ such that $d_M(p,\varphi(p)) \le \eps$ for all $p \in M$.

The following result follows from \cite{RobinsonStrStab74}.

\begin{theorem}\label{thm:str-stab}
    Gradient-like Morse-Smale vector fields are structurally stable.
\end{theorem}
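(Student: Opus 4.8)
The plan is to deduce the statement from Robinson's structural stability theorem \cite{RobinsonStrStab74}, which asserts that a vector field on a compact manifold satisfying Axiom A together with the strong transversality condition is structurally stable in the Whitney $C^1$-topology, by means of a topological equivalence that can be taken arbitrarily close to the identity. Accordingly, the work consists in verifying that a gradient-like Morse--Smale vector field $v\in\X_{gMS}(M)$ fulfills these two hypotheses.

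First I would pin down the non-wandering set $\Omega(v)$. Every non-wandering point is chain-recurrent, so condition $(i)$ in the definition of $\X_{gMS}(M)$ forces $\Omega(v)\subseteq\Sing(v)$; conversely every fixed point is non-wandering, hence $\Omega(v)=\Sing(v)$, a finite set of hyperbolic fixed points. A finite set of hyperbolic fixed points is a hyperbolic invariant set (the hyperbolic splitting is given pointwise by $T_pM=E^s_p\oplus E^u_p$), and the trivial periodic orbits it contains --- namely the fixed points themselves --- are dense in it. Thus $v$ satisfies Axiom A. Next I would note that the strong transversality condition, that $W^u_v(p)$ and $W^s_v(q)$ intersect transversally for all $p,q\in\Omega(v)$, is exactly condition $(ii)$ of the definition of $\X_{gMS}(M)$ once $\Omega(v)=\Sing(v)$ is used; in particular this subsumes the no-cycle condition. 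With Axiom A and strong transversality established, Robinson's theorem gives that $v$ is structurally stable, and the control on the equivalence it provides yields precisely the $\eps$-closeness to the identity demanded by the definition of structural stability used here.

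The only delicate point, which I would flag as the main obstacle, is the bookkeeping needed to match the hypotheses as phrased here (chain-recurrence, transversality of stable and unstable manifolds of fixed points) with those of the dynamical-systems literature (non-wandering set, Axiom A, strong transversality on $\Omega$, no cycles); once the chain of identifications $\Omega(v)=\Sing(v)$ and the resulting hyperbolicity are in place, everything else is a direct translation. As an alternative route, one may instead invoke the classical Palis--Smale theorem that Morse--Smale flows on compact manifolds are structurally stable, of which the present statement is the gradient-like special case, with the same verification of hypotheses.
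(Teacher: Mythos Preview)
Your proposal is correct and takes essentially the same approach as the paper: the paper does not give an independent proof but simply records that the result follows from \cite{RobinsonStrStab74}, and your argument is precisely a careful verification that the hypotheses of Robinson's theorem (Axiom A and strong transversality) are met by a gradient-like Morse--Smale vector field. Your elaboration of why $\Omega(v)=\Sing(v)$ and why this yields Axiom A is accurate and fills in exactly the translation step the paper leaves implicit.
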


\subsection{The smooth Morse complex}

The main theorem of Morse theory states that if we have a Morse function $f\colon M \to \R$, then $M$ is homotopy equivalent to a CW complex with one $k$-cell for each critical point of $f$ of index $k$ \cite{BanyagaLecturesOnMorseHomology}. The proof uses the flow of the negative gradient vector field $-\nabla f$. The idea of Morse homology is to make the connection between the flowlines of $-\nabla f$ and the topology of $M$ more explicit by constructing a chain complex $C_\bullet$ whose vector space in degree $k$ is generated by the critical points of $f$ (i.e.~zeroes of $-\nabla f$ of index $k$) and whose differential can be computed by counting certain flowlines of $-\nabla f$, such that the homology of that chain complex is the homology of $M$ \cite{BanyagaLecturesOnMorseHomology}. Note that what we call the Morse complex here is sometimes called Morse-Smale-Witten complex (e.g. in \cite{BanyagaLecturesOnMorseHomology}) or Thom-Smale complex (e.g. in \cite{GallaisCombinatorial2010}). 
The construction can be extended to gradient-like vector fields, as we show in the following. We give this definition only in the case of $\Fi=\Z/2\Z$, in order to reduce the amount of technicalities. Everything we do is also possible with coefficients in an arbitrary field.
If $v$ is a gradient-like Morse-Smale vector field on $M$, we denote by $\Sing_k(v)$ the set of singular points of $v$ of index $k$.

\begin{definition}
    If $v \in \X(M)$ is a gradient-like Morse-Smale vector field, then the associated \textbf{smooth Morse complex} $\MC_\bullet(v) \in \Ch$ (Morse complex, for brevity) is defined as follows:
    \begin{enumerate}
        \item For $k\ge0$, $\MC_k(v)$ is the free $\Fi$-vector space generated by the singular points of index $k$ of $v$.
        \item The boundary operator $\partial\colon \MC_k(v) \to \MC_{k-1}(v)$ is defined by counting the flowlines between singular points of adjecent index (mod 2). Explicitly, for all $p \in \Sing_k(v)$ define
        \[
        \partial(p) := \sum_{q \in \Sing_{k-1}(v)} \alpha(p,q) \cdot q,
        \]
        where $\alpha(p,q)$ is the number of flowlines from $p$ to $q$ (mod 2), i.e.~the number of connected components of $W^u(p)\cap W^s(q)$ (mod 2).
    \end{enumerate}
\end{definition}

Since a Morse-Smale vector field $v$ has only finitely many singular points, $\MC_k(v)$ is a finite-dimensional vector space for all $k$. Also $\MC_k(v)=0$ for $k<0$ and $k>\dim(M)$. In order to see that the boundary operator $\partial$ is well-defined, observe that, by compactness, for any two critical points $p$ and $q$ with $\ind_v(p)=\ind_v(q)+1$, there can exist only finitely many flowlines from $p$ to $q$.

\begin{lemma}\label{lem:top-equiv-induces-iso}
    If $v,w \in \X_{gMS}(M)$, then a topological equivalence $h\colon M \to M$ between $v$ and $w$ induces an isomorphism between $\MC_\bullet(v)$ and $\MC_\bullet(w)$.
\end{lemma}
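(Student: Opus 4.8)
The plan is to construct the isomorphism $\MC_\bullet(v)\to\MC_\bullet(w)$ directly on generators, using that a topological equivalence $h$ carries the orbit structure of $v$ to that of $w$. First I would observe that $h$ maps chain-recurrent points to chain-recurrent points (since chain-recurrence is defined purely in terms of the orbits and is invariant under orbit-preserving homeomorphisms, up to the usual reparametrization argument), hence restricts to a bijection $\Sing(v)\to\Sing(w)$. Next I would check that this bijection preserves the index: if $p\in\Sing(v)$, then $h$ maps $W^u_v(p)$ homeomorphically onto $W^u_w(h(p))$ and $W^s_v(p)$ onto $W^s_w(h(p))$ (because stable/unstable manifolds are unions of orbits limiting to $p$, and $h$ preserves orbits and their orientations, so forward/backward limits are respected). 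Since $\dim W^u_v(p)=\ind_v(p)$ and likewise for $w$, invariance of domain gives $\ind_v(p)=\ind_w(h(p))$. Therefore $h$ induces a bijection $\Sing_k(v)\to\Sing_k(w)$ for every $k$, and extending linearly over $\Fi$ yields a degree-$0$ isomorphism of graded vector spaces $\MC_\bullet(h)\colon\MC_\bullet(v)\to\MC_\bullet(w)$.

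It then remains to show $\MC_\bullet(h)$ is a chain map, i.e. that it intertwines the two boundary operators. Fix $p\in\Sing_k(v)$ and $q\in\Sing_{k-1}(v)$. The coefficient $\alpha_v(p,q)$ is the number mod $2$ of connected components of $W^u_v(p)\cap W^s_v(q)$, equivalently the number mod $2$ of flowlines (orbits) running from $p$ to $q$. Since $h$ maps orbits of $v$ to orbits of $w$ preserving orientation, it induces a bijection between the set of orbits from $p$ to $q$ under $v$ and the set of orbits from $h(p)$ to $h(q)$ under $w$; hence $\alpha_v(p,q)=\alpha_w(h(p),h(q))$. Comparing the defining formulas for $\partial^{\MC(v)}$ and $\partial^{\MC(w)}$ shows $\partial^{\MC(w)}\circ\MC_\bullet(h)=\MC_\bullet(h)\circ\partial^{\MC(v)}$, so $\MC_\bullet(h)$ is an isomorphism in $\Ch$.

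The main obstacle I expect is the verification that $h$ actually sends stable manifolds to stable manifolds with the correct orientation bookkeeping, and in particular that it preserves the index. A topological equivalence need not preserve the time parametrization of orbits, only their orientation, so one has to argue at the level of the topological (not differentiable) structure: the unstable manifold of $p$ is characterised as the set of points $x$ whose backward orbit converges to $p$, together with $p$ itself, and this characterisation is manifestly invariant under an orientation-preserving orbit equivalence. Once this is granted, invariance of domain upgrades "homeomorphic" to "equal dimension," and the rest is a bookkeeping comparison of the two differentials. A secondary minor point is confirming that $h$ really does biject $\Sing(v)$ with $\Sing(w)$: a fixed point is exactly an orbit consisting of a single point, a property visibly preserved by any orbit-preserving map.
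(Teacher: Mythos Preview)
Your proposal is correct and follows essentially the same approach as the paper's proof: both argue that $h$ bijects singular points preserving index, maps (un)stable manifolds to (un)stable manifolds, and hence preserves the flowline counts $\alpha(p,q)$, so the induced map on generators is a chain isomorphism. You simply spell out in more detail the justifications (single-point orbits for fixed points, invariance of domain for the index) that the paper leaves implicit.
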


\begin{proof}
    A topological equivalence $h$ between $v$ and $w$ establishes a bijection between the singular points of $v$ and the singular points of $w$, respecting the indices. This induces bijections $\Sing_k(v) \to \Sing_k(w)$ and thus isomorphisms $\MC_k(v) \to \MC_k(w)$ for all $k$. Moreover, $h$ maps the stable and unstable manifolds of any singular point $p$ of $v$ to the stable and unstable manifolds of the corresponding singular point $h(p)$ of $w$. Therefore $\alpha(p,q)=\alpha(h(p),h(q))$ for any pair $(p,q) \in \Sing_k(v)\times \Sing_{k-1}(v)$ and thus the isomorphisms $\MC_k(v) \to \MC_k(w)$ commute with the boundary operators in $\MC_\bullet(v)$ and $\MC_\bullet(w)$, so we get an isomorphism of chain complexes. 
\end{proof}

The usual result that the differential squares to zero in the Morse complex of a Morse-Smale function can be transferred to the Morse complex for gradient Morse-Smale vector fields.

\begin{theorem}
    Let $v\in \X(M)$ be a gradient-like Morse-Smale vector field. Then $(\MC_\bullet(v),\partial)$ is a chain complex, i.e.~$\partial^2=0$. Moreover, the homology of this chain complex is isomorphic to the singular homology of $M$.
\end{theorem}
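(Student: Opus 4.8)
The strategy is to reduce everything to the classical case of the Morse complex of a Morse function, for which $\partial^2 = 0$ and the computation of singular homology are standard (see \cite{BanyagaLecturesOnMorseHomology}). The key enabling fact is that, by \cite{Smale_GradientDS_61}, for every gradient-like Morse-Smale vector field $v$ there exists a smooth function $f\colon M \to \R$ that is gradient-like with respect to $v$, i.e.\ $\Sing(v) = \Crit(f)$ and $Df(p)[v(p)] < 0$ for all $p \notin \Sing(v)$; moreover such an $f$ can be taken to be a Morse function, with the index of each critical point of $f$ agreeing with the index of the corresponding singular point of $v$. So the plan is: first, show that the dynamical data used to define $\MC_\bullet(v)$ — the stable and unstable manifolds, the transversality of their intersections, and the mod-$2$ counts $\alpha(p,q)$ — depend only on the oriented foliation by flowlines, which $v$ shares with $-\nabla f$ up to a topological equivalence (or, more carefully, up to the reparametrization relating $v$ and $-\nabla_g f$ for a suitable metric $g$). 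Concretely, one notes that $v$ and $-\nabla_g f$, for an appropriate metric $g$ adapted to $v$, can be arranged to have the same unparametrized orbits, hence the same stable/unstable manifolds and the same connecting-orbit spaces $W^u(p)\cap W^s(q)$.

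Second, with that identification in hand, $\MC_\bullet(v)$ is literally isomorphic, as a chain complex with a distinguished basis, to the classical Morse-Smale-Witten complex of $(f,g)$: the generators match (singular points of index $k$ versus critical points of index $k$), and the matrix coefficients match ($\alpha(p,q)$ equals the mod-$2$ count of gradient flowlines from $p$ to $q$). Then $\partial^2 = 0$ and $H_\bullet(\MC_\bullet(v)) \cong H_\bullet^{\mathrm{sing}}(M;\Fi)$ follow immediately from the corresponding classical theorem. An alternative, if one prefers to avoid choosing a metric, is to invoke structural stability (\Cref{thm:str-stab}) together with \Cref{lem:top-equiv-induces-iso}: a topological equivalence $h$ between $v$ and $-\nabla f$ induces an isomorphism of chain complexes $\MC_\bullet(v) \xrightarrow{\cong} \MC_\bullet(-\nabla f)$, and the latter is the usual Morse complex.

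The main obstacle is the first step, namely establishing rigorously that the connecting-orbit counts for a gradient-like vector field coincide with those of an honest gradient flow. One must check that the compactness and transversality arguments underlying the well-definedness of the count $\alpha(p,q)$ — finiteness of the zero-dimensional moduli space $W^u(p)\cap W^s(q)$ when $\ind(p) = \ind(q)+1$, and the boundary structure of the one-dimensional moduli spaces when $\ind(p) = \ind(q)+2$, which is what forces $\partial^2=0$ — go through for gradient-like fields. The cleanest route is to fix an adapted Riemannian metric $g$ so that $v$ is a positive-function multiple of $-\nabla_g f$ near each singular point and globally has the same flowlines; then the moduli spaces are literally the same sets as in the gradient case, their manifold structure is unaffected by the positive reparametrization, and the classical gluing/compactness results apply verbatim. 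The technical care needed is in handling the behaviour near singular points, where one uses hyperbolicity (already assumed) to get the standard local normal form, and in verifying that a global adapted metric exists — a partition-of-unity construction patching together the local models guaranteed by the Grobman-Hartman/stable manifold theorems.
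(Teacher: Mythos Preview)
Your ``alternative'' route is essentially the paper's proof, but with one imprecision worth flagging: structural stability (\Cref{thm:str-stab}) is not the right input. Structural stability only tells you that vector fields \emph{near} $v$ are topologically equivalent to $v$; it does not, by itself, produce a topological equivalence between $v$ and an honest gradient flow. What the paper actually invokes is the combination of a lemma of Newhouse and Theorem~B of Smale \cite{Smale_GradientDS_61}, which together give the existence of a Morse--Smale function $f$ (with respect to some Riemannian metric) such that $v$ is topologically equivalent to $-\nabla f$. With that in hand, \Cref{lem:top-equiv-induces-iso} transfers $\partial^2=0$ and the homology computation from $-\nabla f$ to $v$ in one line. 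So your alternative is on target once you replace ``structural stability'' with the correct citation.

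Your main approach --- constructing an adapted metric $g$ so that $v$ and $-\nabla_g f$ share the same unparametrized orbits --- is a genuinely different and more hands-on route. It can be made to work, and has the conceptual advantage of making the identification of moduli spaces literal rather than via a homeomorphism, but it is more technically involved: the pointwise construction of $g$ away from $\Sing(v)$ is straightforward, but extending it smoothly across the singular points is exactly where the difficulty sits, as you correctly identify. The paper sidesteps all of this by working only up to topological equivalence, which is enough because the Morse complex depends only on the oriented orbit structure (\Cref{lem:top-equiv-induces-iso}).
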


\begin{proof}
    If $v$ is the gradient of a Morse-Smale function $f\colon M \to \R$ with respect to some Riemannian metric on $M$, then we refer to \cite{BanyagaLecturesOnMorseHomology} for a proof. In order to reduce to that case when $v$ is gradient-like, note that by combining Lemma 2 from \cite{Newhouse_SimpleArc76} and Theorem B from \cite{Smale_GradientDS_61}, there exists a function $f\colon M\to \R$ which is Morse-Smale with respect to some Riemannian metric and $v$ is topologically equivalent to $-\nabla f$. Since the result holds for $-\nabla f$, it also holds for $v$ by \Cref{lem:top-equiv-induces-iso}.
\end{proof}

Later we will parametrize this chain complex. In other words, given a gradient-like Morse-Smale vector field $v \in \X(M)$, we will construct a factored chain complex $X(v)$, such that $X(v)^0 = \MC_\bullet(v)$. We will do this by applying algebraic simplifications to the Morse complex of $v$.

\subsection{Combinatorial vector fields and the combinatorial Morse complex}

Let us start by recalling the definition of a combinatorial vector field, first introduced in \cite{Forman_CombVectFieldsDynSyst}.
Given a simplicial complex $K$, a \textbf{combinatorial vector field} on $K$ is a function $V\colon K \to K \cup \{0\}$ such that for all $\sigma \in K$: either $V(\sigma)=0$ or $\dim V(\sigma)=\dim \sigma +1$ and $\sigma$ is a face of $V(\sigma)$, $V(V(\sigma))=0$, and $|V^{-1}(\sigma)| \le 1$ for all $\sigma \in K$. %
Cells $\sigma \in K$ with $V(\sigma)=0$ and $V^{-1}(\sigma)=\emptyset$ are called \textbf{critical cells} for $V$. We denote by $\overline{\X}(K)$ the set of all combinatorial vector fields on $K$.
If $V$ is a combinatorial vector field on $K$, then, following \cite{GallaisCombinatorial2010}, a \textbf{$V$-path of dimension $k$} is a sequence $\sigma_0, \sigma_1, \ldots, \sigma_r$ of $k$-simplices, such that $\sigma_i \neq \sigma_{i+1}$ and $\sigma_{i+1}$ is a hyperface of $V(\sigma_i)$ for all $i=0,\ldots,r-1$. If $r=0$, then this $V$-path is called \textbf{stationary} and if $\sigma_r=\sigma_0$, then it is called \textbf{closed}. 
See \cite{GallaisCombinatorial2010} for more details. 
A combinatorial vector field $V$ with no closed non-stationary $V$-paths is called \textbf{gradient-like}. We denote by $\overline{\X}_g(K)$ the set of all gradient-like combinatorial vector fields on $K$.

\begin{definition}\label{def:comb-Morse-complex}
    If $V \in \overline{\X}_g(K)$ is a gradient-like combinatorial vector field on a simplicial complex $K$, then the associated \textbf{combinatorial Morse complex} $\overline{\MC}_\bullet(V) \in \Ch$ is defined as follows:
    \begin{enumerate}
        \item For $k\ge0$, $\overline{\MC}_k(V)$ is the $\Fi$-vector space generated by the critical $k$-cells of $V$.
        \item The boundary operator $\partial\colon \overline{\MC}_k(V) \to \overline{\MC}_{k-1}(V)$ is defined by counting (mod 2) the number of $V$-paths between each pair of critical cells of indices $k$ and $k-1$.
    \end{enumerate}
\end{definition}

We refer to \cite{GallaisCombinatorial2010} for a proof that this is indeed a chain complex (called under the different name of Thom-Smale complex) and that the homology of this chain complex is isomorphic to the simplicial homology of $K$.

\subsection{Relating the smooth and combinatorial Morse complexes} 

If $M$ is a smooth manifold, then a \textbf{triangulation} of $M$ is a pair $(M',\phi)$, where $M'$ is a simplicial complex and $\phi\colon |M'| \to M$ is a homeomorphism from the geometric realization of $M'$ to $M$. A \textbf{triangulated manifold} is a triple $(M,M',\phi)$, where $M$ is a smooth manifold and $(M',\phi)$ is a triangulation of $M$. We say \textbf{triangulated Riemannian manifold} if $M$ is in addition a Riemannian manifold.

The following result is proven by Gallais in \cite{GallaisCombinatorial2010}, with a small gap filled by Benedetti.

\begin{theorem}\label{thm:Gallais}
    Let $M$ be a smooth closed oriented Riemannian manifold and let $v \in \X_{gMS}(M)$. Then there exists a triangulation $(M',\phi)$ of $M$ and a gradient-like combinatorial vector field $V \in \overline{\X}_g(M')$ such that
    \begin{enumerate}[$(i)$]
        \item  For every $k$ there exists a bijection between the singular points of $v$ of index $k$ and the critical cells of $V$ of dimension $k$. If $p \in \Sing(v)$ corresponds to $\sigma \in \Crit(V)$, then $p$ lies in the geometric realization of the corresponding critical cell, i.e.~$p \in \phi(|\sigma|)$. 
        \label{item:gallais-1}
        \item For a pair $(p,q)$ of singular points of $v$ of index $k+1$ and $k$ and the corresponding critical cells $(\tau,\sigma)$ of $V$ we have a bijection between flow lines from $p$ to $q$ and $V$-paths from a hyperface of $\tau$ to $\sigma$. \label{item:gallais-2}
        \item The bijection from $(\ref{item:gallais-1})$ induces an isomorphism between $\MC_\bullet(v)$ and $\overline{\MC}_\bullet(V)$.
        \label{item:gallais-3}
    \end{enumerate}
\end{theorem}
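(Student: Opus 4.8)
The plan is to prove \Cref{thm:Gallais} essentially by citing and carefully transcribing Gallais's construction, since the excerpt explicitly says this is his result with a gap filled by Benedetti. Concretely, I would proceed in three stages mirroring the three claims of the statement. First, I would invoke the existence of an appropriate triangulation. The key input is that a gradient-like Morse--Smale vector field $v$ is, by the remark after \Cref{thm:str-stab} (combining Newhouse's Lemma 2 and Smale's Theorem B), topologically equivalent to $-\nabla f$ for some Morse--Smale function $f$; so it suffices to build the triangulation adapted to $-\nabla f$ and then transport it along the topological equivalence. Gallais constructs a triangulation $(M',\phi)$ of $M$ whose simplices are "compatible" with the stable/unstable manifold decomposition induced by the flow — roughly, each open cell of the CW structure coming from the unstable manifolds is triangulated so that the combinatorial flow it carries refines the smooth one. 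This is where Benedetti's correction enters: ensuring such a compatible triangulation genuinely exists in all dimensions. I would cite \cite{GallaisCombinatorial2010} for the construction and the fix.

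Second, for claim $(\ref{item:gallais-1})$, I would observe that the compatibility of the triangulation forces each critical cell of $V$ of dimension $k$ to contain exactly one singular point of $v$ of index $k$ in its geometric realization, and conversely; the bijection is then the one sending $p$ to the unique critical cell $\sigma$ with $p\in\phi(|\sigma|)$. The index-dimension matching is immediate from the local model: near a hyperbolic singular point of index $k$, the unstable manifold is $k$-dimensional, and the combinatorial vector field built on the triangulated unstable manifold has the corresponding critical cell in dimension $k$. For claim $(\ref{item:gallais-2})$, the heart is a correspondence between connected components of $W^u(p)\cap W^s(q)$ (the smooth flowlines from $p$ to $q$, counted in the definition of $\MC_\bullet(v)$) and combinatorial $V$-paths from a hyperface of $\tau$ to $\sigma$. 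This follows from the fact that the triangulation is chosen fine enough and compatible enough that each smooth connecting orbit is "tracked" by a unique combinatorial $V$-path and vice versa — again Gallais's argument, which one verifies by analyzing how simplices of the triangulation sit inside the stable/unstable manifolds and how $V$ is defined on them.

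Third, claim $(\ref{item:gallais-3})$ is a formal consequence of $(\ref{item:gallais-1})$ and $(\ref{item:gallais-2})$ together with the mod-$2$ count in the definitions of the two boundary operators. The bijection from $(\ref{item:gallais-1})$ gives a linear isomorphism $\MC_k(v)\xrightarrow{\sim}\overline{\MC}_k(V)$ in each degree; to see it is a chain map, one compares, for $p\in\Sing_{k}(v)$ with corresponding critical cell $\tau$, the coefficient of $q$ in $\partial p$ — namely $\alpha(p,q)=\#\{\text{flowlines }p\to q\}\bmod 2$ — with the coefficient of $\sigma$ in $\partial\tau$ — namely $\#\{V\text{-paths from a hyperface of }\tau\text{ to }\sigma\}\bmod 2$. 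These two numbers agree by the bijection of $(\ref{item:gallais-2})$, so the isomorphism commutes with $\partial$. (One should also note that $v$ is topologically equivalent to $-\nabla f$ for a Morse--Smale $f$, and that topological equivalences preserve the flowline counts, exactly as in the proof of \Cref{lem:top-equiv-induces-iso}; this lets one reduce the whole statement to the gradient case where Gallais works, then transport back.)

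The main obstacle is not in our hands: it is the existence of the compatible triangulation in full generality, which is precisely the subtle point Gallais's original paper glossed over and which Benedetti repaired. In our write-up I would therefore treat \Cref{thm:Gallais} as a black box quotation, being careful only to (a) phrase the reduction from gradient-like $v$ to a genuine gradient $-\nabla f$ via topological equivalence, invoking \Cref{lem:top-equiv-induces-iso} and the Newhouse--Smale results already cited, and (b) record that the flowline/$V$-path bijection of part $(\ref{item:gallais-2})$ is exactly what makes the index-matching bijection of part $(\ref{item:gallais-1})$ into an isomorphism of chain complexes, so that part $(\ref{item:gallais-3})$ needs no independent argument. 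The genuinely new content for this paper lies downstream — in equipping $\overline{\MC}_\bullet(V)$ with weights coming from the triangulation and comparing the resulting tagged barcode to that of $v$ — not in reproving Gallais.
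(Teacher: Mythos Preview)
The paper gives no proof of this theorem at all: it is stated immediately after the sentence ``The following result is proven by Gallais in \cite{GallaisCombinatorial2010}, with a small gap filled by Benedetti,'' and is used thereafter as a black box. Your proposal correctly identifies this and takes the same approach---citing Gallais for the compatible triangulation and Benedetti for the fix---while adding two helpful but inessential elaborations (the reduction from gradient-like $v$ to a genuine gradient $-\nabla f$ via topological equivalence and pullback of the triangulation along the homeomorphism, and the observation that $(\ref{item:gallais-3})$ is a formal consequence of $(\ref{item:gallais-1})$ and $(\ref{item:gallais-2})$); both are sound, but the paper simply takes the theorem as given without spelling either out.
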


\paragraph{Barycentric subdivision.}

Given a simplicial complex $K$, we denote by $\Delta(K)$ its barycentric subdivision. The $k$-simplices of $\Delta(K)$ correspond to chains of length $k+1$ $\sigma_0 \subseteq \cdots \subseteq \sigma_k$ of simplices in $K$. We write $\Delta^n(K)$ for the $n$-th iteration.

Assume that moreover we are given $V \in \overline{\X}(K)$ and for every critical $k$-cell $\sigma$ of $V$ we have chosen a $k$-cell of $\Delta(K)$ that lies in $\sigma$. Then it is shown by Zhukova in \cite{ZhukovaBarycentric2018} that $V$ and these choices of cells induce a combinatorial vector field on $\Delta(K)$ in a canonical way. We denote this combinatorial vector field by $\Delta(V) \in \overline{\X}(\Delta(K))$, hiding the dependence on the choices of cells from the notation.
Equivalently, the choice upon which $\Delta(V)$ depends amounts to choosing an ordering of the vertices of each critical simplex of $V$.
The following result from \cite{ZhukovaBarycentric2018} lists some of the properties that $\Delta(V)$ has.

\begin{theorem}\label{thm:Zhukova}
    Let $K$ be a simplicial complex and $V \in \overline{\X}_g(K)$. Assume we have chosen an ordering of the vertices of every critical cell of $V$ (and thus $\Delta(V)$ is defined). Then
    \begin{enumerate}[$(i)$]
        \item $\Delta(V) \in \overline{\X}_g(\Delta(K))$, i.e.~there are no closed non-stationary $\Delta(V)$-paths.
        \item The critical simplices of $\Delta(V)$ are exactly the chosen ones. This yields a bijection between the critical $k$-cells of $V$ and the critical $k$-cells of $\Delta(V)$ for all $k$. \label{item:bijection-crit-cells}
        \item There is a one-to-one correspondence between gradient paths (see \cite{ZhukovaBarycentric2018} for the definition) of $V$ and of $\Delta(V)$, respecting the bijections from $(\ref{item:bijection-crit-cells})$. \label{item:corr-gradient-paths}
    \end{enumerate}
\end{theorem}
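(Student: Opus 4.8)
The statement is quoted from \cite{ZhukovaBarycentric2018}, so the plan is to reconstruct Zhukova's argument in outline. The key object is the combinatorial vector field $\Delta(V)$ on $\Delta(K)$, and the first task is to make its construction explicit. Recall that $\Delta(K)$ is the order complex of the face poset $P(K)$, so its $k$-simplices are flags $\sigma_0 \subsetneq \sigma_1 \subsetneq \cdots \subsetneq \sigma_k$ of simplices of $K$, and that a combinatorial vector field is the same datum as an acyclic partial matching $\mu$ on the Hasse diagram of $P(K)$ (pairing each non-critical $\sigma$ with $V(\sigma)$ or with $V^{-1}(\sigma)$). I would then write out how $\mu$, together with the chosen ordered sub-simplex inside each critical cell of $V$, induces a partial matching on the Hasse diagram of $P(\Delta(K))$: given a flag $\mathbf{c}$, scan it from the bottom for the first entry on which $\mu$ acts and pair $\mathbf{c}$ with the flag obtained by inserting (or deleting) the matched partner of that entry, with a separate local rule inside each chosen critical sub-simplex that leaves exactly the chosen flags unmatched.

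Granting this construction, part (ii) is essentially a matter of inspection: by design the only $\Delta(V)$-unmatched flags are the chosen critical flags, and since a chosen critical $k$-flag is a $k$-simplex of $\Delta(K)$ lying inside the corresponding critical $k$-cell of $V$, the assignment ``critical $k$-cell of $V$ $\mapsto$ chosen critical $k$-flag'' is the desired index-preserving bijection. Part (iii) is then proved by tracking gradient paths through the construction. A $V$-path (equivalently, a $\mu$-gradient path) from a critical $\tau$ down to a critical $\sigma$ is an alternating down-up sequence of elementary steps in $P(K)$; its image is obtained by concatenating, for each elementary step, the local $\Delta(\mu)$-gradient segment that the construction assigns to it, glued to the connecting flags inside the two critical cells at the ends. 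That this is a bijection follows by exhibiting the inverse: because $\Delta(\mu)$ was built step-by-step from $\mu$, every $\Delta(V)$-gradient path between chosen critical flags decomposes uniquely into segments each lying over a single elementary step of $\mu$, and reading off these steps recovers a $V$-path; the two assignments are visibly mutually inverse, so in particular the mod-$2$ path counts agree, which is what feeds into results such as \Cref{thm:Gallais}.

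I expect part (i) --- acyclicity of $\Delta(V)$ --- to be the real obstacle. The tempting argument, ``project a closed non-stationary $\Delta(V)$-path down to a closed non-stationary $V$-path and invoke $V \in \overline{\X}_g(K)$'', fails, because a closed $\Delta(V)$-path can live entirely over a single simplex of $K$ and hence project to a stationary $V$-path. The remedy is to produce a discrete Morse function (equivalently, a well-founded rank) on $P(\Delta(K))$ that strictly decreases along every non-stationary $\Delta(\mu)$-gradient step: one refines, lexicographically, the pullback along the natural map to $P(K)$ of a discrete Morse function for $\mu$ by an auxiliary statistic recording the position of a flag within its ambient subdivided simplex, and checks the strict-decrease property step-by-step against the local rules of the construction; a closed non-stationary path would then have to return its rank to the starting value, a contradiction. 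Equivalently, one can argue by induction on the number of matched pairs of $\mu$, removing one free pair $(\sigma, V(\sigma))$ at a time and using that the barycentric subdivision of an elementary collapse is an acyclic composite of elementary collapses. Either way, the delicate bookkeeping is the compatibility of the local matchings inside subdivided simplices along their shared faces. Once (i) is established, (ii) and (iii) follow as sketched above.
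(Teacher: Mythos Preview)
The paper does not prove this theorem: it is stated in the preliminaries as a result from \cite{ZhukovaBarycentric2018} and is simply quoted without argument. So there is no ``paper's own proof'' to compare against; your proposal already does more than the paper by attempting to reconstruct Zhukova's argument.

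As a sketch of that reconstruction, your outline is reasonable. You correctly identify that $\Delta(V)$ is built from a partial matching on flags using the matching $\mu$ coming from $V$ plus local data inside critical cells, that (ii) follows by design, and that (iii) is a matter of lifting and projecting gradient paths through the construction. You are also right that (i) is where the work is and that the naive projection argument fails because a putative closed $\Delta(V)$-path could sit over a single simplex of $K$; producing a strictly decreasing rank (or arguing via elementary collapses under subdivision) is the standard way out. The one place I would flag as still genuinely incomplete is your description of the matching rule on flags: ``scan from the bottom for the first entry on which $\mu$ acts and insert/delete its partner'' is the right flavour but does not by itself handle flags that meet both matched and critical simplices, nor does it specify how the local rule inside a subdivided critical cell interacts with the global rule across its boundary. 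That compatibility is exactly the ``delicate bookkeeping'' you mention, and without it one cannot actually verify the strict-decrease property needed for (i). If you want this to stand as a proof rather than a plan, that is the part to write out.
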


It follows from $(\ref{item:corr-gradient-paths})$ that the bijections from $(\ref{item:bijection-crit-cells})$ yield an isomorphism $\overline{\MC}_\bullet(V) \cong \overline{\MC}_\bullet(\Delta(V))$.

\section{Factored chain complexes}\label{sec:tepch}

In this section we develop the general theory of tame epimorphic parametrized chain complexes, i.e. factored chain complexes. Many of the results are analogous to the case of tame parametrized vector spaces, i.e.~persistence modules.

We start by a classification of the chain maps between disks and spheres in $\Ch$, which are the simplest chain complexes. This will help us describe the simplest possible parametrized chain complexes and parametrized chain maps between them. Note that in the category $\Ch$, an epimorphism is a chain map that is surjective in every degree. The result follows from a basic calculation and we omit the proof.

\begin{proposition} \label{prop:non-zero-chain-maps-disks-spheres}
    Any non-zero chain map between disks and/or spheres in $\Ch$ is a scalar multiple of one of the following maps:
    \begin{enumerate}[$(i)$]
        \item The identity $\one_{D^n} \colon D^n \to D^n$. This map is an epimorphism.
        \item The identity $\one_{S^n} \colon S^n \to S^n$. This map is an epimorphism.
        \item The inclusion $\iota^n\colon S^n \hookrightarrow D^{n+1}$. This map is not an epimorphism.
        \item The chain map $\Phi^n\colon D^n \to D^{n+1}$, which is the identity in degree $n$ and zero in all other degrees. This map is not an epimorphism.
        \item The chain map $\Psi^n\colon D^n \to S^n$, which is the identity in degree $n$ and zero in all other degrees. This map is an epimorphism.
    \end{enumerate}
\end{proposition}

This is what the chain map $\Psi^n\colon D^n \to S^n$ looks like:

\begin{center}
    \begin{tikzcd}
        D^n \ar[d,"\Psi^n"]
        &\cdots \ar[r] 
        &0 \ar[r] \ar[d]
        &\Fi \ar[r] \ar[d]
        &\Fi \ar[r] \ar[d]
        &0 \ar[r] \ar[d] 
        &\cdots \\
        S^n
        &\cdots \ar[r] 
        &0 \ar[r]
        &\Fi \ar[r]
        &0 \ar[r]
        &0 \ar[r] 
        &\cdots 
    \end{tikzcd}
\end{center}
Linear maps $\Fi \to \Fi$ in any diagram are always assumed to be the identity map, if not stated otherwise.

\subsection[Interval functors]{Interval functors in $\TEPCh$}

In this subsection, we describe the simplest possible factored chain complexes, which turn out to be the building block for all other factored chain complexes. Analogously to how the simplest parametrized vector spaces are characterized by intervals, these factored chain complexes are characterized by tagged intervals, which are intervals with an additional choice of one point. This point marks the time of collapsing from a disk to a sphere.

\begin{definition}\label{def:interval}
    A \textbf{tagged interval} is a tuple consisting of a real interval $[0,t)$, where $0< t\le \infty$, together with a distinguished point $s\in [0,t]$. We denote this tagged interval also by $[0,s,t)$. We write $\J$ for the set of all tagged intervals.
\end{definition}

\begin{definition}
    Let $n\in \N$ and $[0,s,t) \in \J$. In the case $n=0$ we additionally assume that $s=0$. Then we define the \textbf{interval functor $\I^n[0,s,t)$ in $\TEPCh$} by 
    \[
    (\I^n[0,s,t))^r = \begin{cases}
        D^n, \text{ if } 0\le r <s,\\
        S^n, \text{ if } s \le r <t,\\
        0, \text{ otherwise},
    \end{cases}
    \quad \text{and} \quad 
    (\I^n[0,s,t))^{q\le r} = \begin{cases}
        \one_{D^n}, \text{ if } 0\le q \le r <s,\\
        \Psi^n, \text{ if } 0\le q < s \le r <t,\\
        \one_{S^n}, \text{ if } s\le q \le r <t,\\
        0, \text{ otherwise}.
    \end{cases}
    \] 
\end{definition}

Note that the interval functor $\I^n[0,s,t)$ is non-zero on $[0,t)$ in degree $n$ and on $[0,s)$ in degree $n-1$, hence one may think of the tagged interval $[0,s,t)$ as representing the pair $[0,s) \subseteq [0,t)$.

The following diagram shows a diagrammatic depiction of the interval functor $\I^n[0,s,t)$ in $\TEPCh$, where chain complexes are drawn as columns.

\begin{center}
\begin{tikzcd}[row sep=small]
    &0 &&&s&&&t\\
    &\vdots \ar[d] &&\vdots \ar[d] &\vdots \ar[d] &&\vdots \ar[d] &\vdots \ar[d] &\\
    &0 \ar[d] \ar[r] &\cdots \ar[r] &0 \ar[d] \ar[r] &0 \ar[d] \ar[r] &\cdots \ar[r] &0 \ar[d] \ar[r] &0 \ar[d] \ar[r] &\cdots \\
    n &\Fi \ar[d] \ar[r] &\cdots \ar[r] &\Fi \ar[d] \ar[r] &\Fi \ar[d] \ar[r] &\cdots \ar[r] &\Fi \ar[d] \ar[r] &0 \ar[d] \ar[r] &\cdots \\
    n-1 &\Fi \ar[d] \ar[r] &\cdots \ar[r] &\Fi \ar[d] \ar[r] &0 \ar[d] \ar[r] &\cdots \ar[r] &0 \ar[d] \ar[r] &0 \ar[d] \ar[r] &\cdots \\
    &0 \ar[d] \ar[r] &\cdots \ar[r] &0 \ar[d] \ar[r] &0 \ar[d] \ar[r] &\cdots \ar[r] &0 \ar[d] \ar[r] &0 \ar[d] \ar[r] &\cdots \\
    &\vdots &&\vdots &\vdots &&\vdots &\vdots & \\
\end{tikzcd}
\end{center}
For a pictorial representation of $\I^2[0,0,\infty)$, $\I^2[0,1,1)$ and $\I^0[0,0,\infty)$, see \Cref{fig:vector-field-barcode} (right).

We now prove some results describing parametrized chain maps between different interval functors in $\TEPCh$. This will later be useful for the structure theorem. We introduce a notation that will be convenient in many proofs and make statements more readable: For $X \in \TPCh$, $a \in X^s_n$ and $t\ge s$, we set $a^t := X^{s\le t}_n(a) \in X^t_n$. Moreover, we use the convention that $X^\infty_\bullet$ denotes the zero chain complex, so that $X^{s\le \infty}_n$ is the zero map for all $0\le s \le \infty$ and $n \in \N$.

\begin{lemma}\label{lem:non-zero-parmaps}
    Let $[0,s,t) \in \J$, let $1_n \in \I^n[0,s,t)^0_n$ be a generator of $\Fi$ and let $X \in \PCh$ be any parametrized chain complex. Then,
    \[
    \Hom(\I^n[0,s,t),X) \cong \ker(X^{0\le t}_n) \cap \ker(X^{0\le s}_{n-1}\circ \partial^0_n) \subseteq X^0_n,
    \]
    where the isomorphism is given by $\varphi \mapsto \varphi^0_n(1_n)$.
\end{lemma}

\begin{proof}
    Assigning $\varphi \mapsto \varphi^0_n(1_n)$ yields a linear map $\Hom(\I^n[0,s,t),X) \to X^0_n$, so it suffices to check that this map is injective and has image $\ker(X^{0\le t}_n) \cap \ker(X^{0\le s}_{n-1}\circ \partial^0_n)$.

    For injectivity, note that every non-zero element $x \in \I^n[0,s,t)^r_k$, with $r\ge0$, is a scalar multiple of either $1^r_n$ or $\partial 1^r_n$ under the internal maps. Therefore, if $\varphi^0_n(1_n)=0$, then also $\varphi=0$. 
    
    To show that the image is contained in $\ker(X^{0\le t}_n) \cap \ker(X^{0\le s}_{n-1}\circ \partial^0_n)$, note that, in $\I^n[0,s,t)$, $\partial 1^s_n=0$ and $1^t_n=0$, thus if $\varphi\colon \I^n[0,s,t) \to X$ is any parametrized chain map with $\varphi^0_n(1_n)=a$, then 
    \[
    X^{0\le s}_{n-1}(\partial^0_n a) = X^{0\le s}_{n-1}(\partial^0_n \varphi^0_n(1_n))= X^{0\le s}_{n-1}(\varphi^0_{n-1}(\partial 1_n)) = \varphi^s_{n-1} \left( \partial 1^s_n \right) = \varphi^s_{n-1}(0) = 0
    \]
    and 
    \[
    X^{0\le t}_n(a) = X^{0\le t}_n(\varphi^0_n(1_n)) = \varphi^t_n \left( 1^t_n \right) = \varphi^t_n(0) = 0,
    \]
    hence $a \in \ker(X^{0\le t}_n) \cap \ker(X^{0\le s}_{n-1}\circ \partial^0_n)$.
    
    On the other hand, if $a \in \ker(X^{0\le t}_n) \cap \ker(X^{0\le s}_{n-1}\circ \partial^0_n)$, then we can define a parametrized chain map $\varphi\colon \I^n[0,s,t) \to X$ explicitly by defining, for every $r\ge 0$,
    \[
    \varphi^r_n(1^r_n) := a^r
    \quad \text{and} \quad
    \varphi^r_{n-1}(\partial 1^r_n) := \partial^r_n a^r,
    \]
    and then extending linearly to all of $\I^n[0,s,t)$. This is enough since the elements $1^r_n$ and $\partial 1^r_n$ are generators. This is well-defined because by assumption $\partial^s_n(a^s)=0$ and $a^t=0$, and thus yields $\varphi \in \Hom(\I^n[0,s,t),X)$ with $\varphi^0_n(1_n) =a$.
\end{proof}

\begin{lemma}\label{lem:non-zero-morphism}
    Let $n,m \in \N$ and $[0,s,t), [0,s',t') \in \J$. Then
    \[
    \Hom(\I^n[0,s,t), \I^m[0,s',t')) \cong \begin{cases}
        \Fi, &\text{if } m=n \text{ and } s\ge s',\ t\ge t', \\
        \Fi, &\text{if } m=n+1 \text{ and } t\ge s' >0, \\
        0, &\text{otherwise.}
    \end{cases}
    \]
\end{lemma}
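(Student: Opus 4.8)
The plan is to read $\Hom$ off directly from \Cref{lem:non-zero-parmaps}. Write $Y := \I^n[0,s,t)$ and $X := \I^m[0,s',t')$. By \Cref{lem:non-zero-parmaps}$(\ref{item:non-zero-parmaps-1})$ the assignment $\varphi \mapsto \varphi^0_n(1^0_n)$ is an injective linear map $\Hom(Y,X) \to X^0_n$, and by \Cref{lem:non-zero-parmaps}$(\ref{item:non-zero-parmaps-2})$ its image is the linear subspace
\[
S := \{\, a \in X^0_n \mid X^{0\le s}_{n-1}(\partial a) = 0 \ \text{and}\ X^{0\le t}_n(a) = 0 \,\},
\]
where the condition involving $t$ (resp.\ both conditions) is to be omitted when $t = \infty$ (resp.\ $s = t = \infty$). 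Since $X^0$ is a disk or a sphere, and disks and spheres are at most one-dimensional in each degree, $\dim_\Fi X^0_n \le 1$; hence $\Hom(Y,X)$ is isomorphic to $\Fi$ when $S = X^0_n \neq 0$ and to $0$ otherwise. It therefore remains to decide, by a case analysis, when a generator of $X^0_n$ lies in $S$.

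First I would record two bookkeeping facts about $X = \I^m[0,s',t')$. (a) $X^0 = D^m$ if $s' > 0$ and $X^0 = S^m$ if $s' = 0$; since $D^m$ is non-zero exactly in degrees $m$ and $m-1$ while $S^m$ is non-zero only in degree $m$, this gives $X^0_n \neq 0$ if and only if $m = n$, or $m = n+1$ and $s' > 0$ — so in every remaining case $\Hom(Y,X) = 0$, the ``otherwise'' branch. (b) Each internal map $X^{0\le r}$ is one of $\one_{D^m}$, $\Psi^m$, $\one_{S^m}$, $0$, and the zero map occurs exactly when $X^r = 0$; consequently, for a generator $a$ of a one-dimensional space $X^0_k$ one has $X^{0\le r}_k(a) = 0$ if and only if $X^r_k = 0$. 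With this, the case $m = n+1$ is immediate: then $s' > 0$, $X^0 = D^{n+1}$, and the generator $a$ of $X^0_n$ has $\partial a = 0$, so the first condition defining $S$ is vacuous, while by (b) the second holds iff $X^t_n = 0$, i.e.\ iff $t \ge s'$ (automatically true, hence harmless, when $t = \infty$); together with $s' > 0$ this is the second branch. In the case $m = n$, $a$ is the top-degree generator of $X^0 \in \{D^n, S^n\}$; by (b) the first condition holds iff $X^s_{n-1} = 0$, which reduces to $s \ge s'$ (and is automatic, in accordance with ``$s \ge s'$'', when $s' = 0$, in which case also $\partial a = 0$), and the second condition holds iff $X^t_n = 0$, i.e.\ iff $t \ge t'$; thus $\Hom(Y,X) \cong \Fi$ precisely when $s \ge s'$ and $t \ge t'$, the first branch. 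This exhausts all cases.

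The argument is pure bookkeeping and carries no conceptual difficulty; the only point that wants a little care is to make sure the degenerate cases handled in \Cref{lem:non-zero-parmaps} by dropping conditions (namely $t = \infty$, and $s = t = \infty$) stay consistent with the inequalities $t \ge t'$, $t \ge s'$ appearing in the statement — which they do, since these inequalities hold trivially when $t = \infty$, and the conditions on $s$ in the first branch are the ones governed by $X^{0\le s}_{n-1}$, which is never dropped.
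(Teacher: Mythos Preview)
Your proof is correct and rests on the same lemma (\Cref{lem:non-zero-parmaps}) as the paper's, but you organise the argument differently. The paper proceeds by first writing down, in each of the two nonzero cases, an explicit parametrized chain map (assembled from $\one_{D^n}$, $\Psi^n$, $\one_{S^n}$, $\Phi^n$, $\iota^n$), checking it commutes with the internal maps, and then invoking \Cref{lem:non-zero-parmaps}$(\ref{item:non-zero-parmaps-1})$ for uniqueness up to scalar; the vanishing in the remaining cases is deduced from \Cref{lem:non-zero-parmaps}$(\ref{item:non-zero-parmaps-2})$. You instead use \Cref{lem:non-zero-parmaps} once and for all to identify $\Hom$ with the subspace $S\subseteq X^0_n$ cut out by the two vanishing conditions, and then read off $S$ by a short case analysis on $X^r_k$. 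This is a bit more economical for the lemma at hand. One small trade-off: the paper's explicit morphisms are reused later (in the proof of \Cref{lem:intervals-tepch-eps-interleaved}) to build interleavings, so the concrete formulae are not wasted effort there, whereas your argument establishes existence without exhibiting the maps.
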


\begin{proof}
    For $X=\I^m[0,s',t')$ we have 
    \[
    \ker(X^{0\le t}_n) \cong \begin{cases}
        \Fi, &\text{if } m=n \text{ and } t\ge t', \\
        \Fi, &\text{if } m=n+1 \text{ and } t\ge s' >0, \\
        0, &\text{otherwise,}
    \end{cases}
    \quad \text{and} \quad
    \ker(X^{0\le s}_{n-1}\circ \partial^0_n) \cong \begin{cases}
        \Fi, &\text{if } m=n \text{ and } s\ge s', \\
        \Fi, &\text{if } m=n+1 \text{ and } s' >0, \\
        0, &\text{otherwise.}
    \end{cases}
    \]
    By using \Cref{lem:non-zero-parmaps} and intersecting the two kernels, the result follows.
\end{proof}

\begin{proposition}
    Interval functors $\I^n[0,s,t)$ are indecomposable in $\TEPCh$.
\end{proposition}

\begin{proof}
    Consider a decomposable factored chain complex $X \in \TEPCh$, i.e.~$X=Y\oplus Z$ for $Y,Z \in \TEPCh$ nonzero. If we denote by $\phi$ and $\psi$ the projections on the first and second summand, then these are two nonzero elements of $\End(X)$ none of which is a scalar multiple of the other, so $\End(X) \ncong \Fi$. However, it follows from \Cref{lem:non-zero-morphism} that $\End(\I^n[0,s,t)) \cong \Fi$, so $\I^n[0,s,t)$ must be indecomposable.
\end{proof}

\subsection{Parametrized vector spaces induced from parametrized chain complexes}\label{sec:par-vec-ind-from-par-chain}

Given a parametrized chain complex $X$, we present two important ways how we can construct parametrized vector spaces from $X$. One is by taking homology and the other considers the image of the differential in $X$. If $X$ is tame and epimorphic, then the two of them together in some sense contain all the information about $X$, as we will see later.

Given a chain complex $C_\bullet$, we denote by $H_n(C_\bullet)$ the homology of $C_\bullet$ in degree $n$ with coefficients in $\Fi$.

\begin{definition}
     If $X\colon [0,\infty) \to \Ch$ is a parametrized chain complex, its \textbf{$n$-homology} is the parametrized vector space $H_n(X)\colon [0,\infty) \to \Vect$, defined by
    \begin{itemize}
        \item $H_n(X)^t := H_n(X^t)$, 
        \item $H_n(X)^{s\le t} := H_n(X^{s\le t})$.
    \end{itemize}
    Given a parametrized chain map $\varphi\colon X \to Y$, we define the parametrized linear map $H_n(\varphi)\colon H_n(X) \to H_n(Y)$ by 
    \begin{itemize}
        \item $H_n(\varphi)^t := H_n(\varphi^t)$.
    \end{itemize}
\end{definition}

\begin{definition}
     If $X\colon [0,\infty) \to \Ch$ is a parametrized chain complex, its \textbf{$n$-boundary} is the parametrized vector space $\partial_n(X)\colon [0,\infty) \to \Vect$, defined by
    \begin{itemize}
        \item $\partial_n(X)^t := \im(\partial_n^{X^t}) \subseteq X^t_{n-1}$, 
        \item $\partial_n(X)^{s\le t} := X^{s\le t}_{n-1}|_{\partial_n(X)^s}$.
    \end{itemize}
    Given a parametrized chain map $\varphi\colon X \to Y$, we define the parametrized linear map $\partial_n(\varphi)\colon \partial_n(X) \to \partial_n(Y)$ by 
    \begin{itemize}
        \item $\partial_n(\varphi)^t := \varphi^t|_{\partial_n(X)^t}$.
    \end{itemize}
\end{definition}

\begin{remark}
    There are functors $H_n,\partial_n\colon \Ch \to \Vect$, such that for all $X \in \PCh$ we have $H_n(X) = H_n\circ X$ and $\partial_n(X) = \partial_n \circ X$. From this it follows, that $H_n(X),\partial_n(X) \in \PVect$ and moreover, the assignments $X \mapsto H_n(X)$ and $X \mapsto \partial_n(X)$ are functors $\PCh \to \PVect$. Explicitly, this \textbf{functoriality} means that $H_n$ and $\partial_n$ have the following properties. 
    \begin{enumerate}[$(i)$]
        \item If $X \in \PCh$, then $H_n(X), \partial_n(X) \in \PVect$.
        \item If $\varphi\colon X \to Y$ is a parametrized chain map, then $H_n(\varphi)\colon H_n(X) \to H_n(Y)$ and $\partial_n(\varphi)\colon \partial_n(X) \to \partial_n(Y)$ are parametrized linear maps.
        \item Identity morphisms get mapped to identity morphisms, i.e.~$H_n(\one_X)=\one_{H_n(X)}$ and $\partial_n(\one_X)=\one_{\partial_n(X)}$ for all $X \in \PCh$.
        \item If $\psi\colon Y \to Z$ is another parametrized chain map, then $H_n(\psi\circ\varphi) = H_n(\psi)\circ H_n(\varphi)$ and $\partial_n(\psi\circ\varphi) = \partial_n(\psi)\circ \partial_n(\varphi)$.
        \item If $X \cong Y$ in $\PCh$, then $H_n(X)\cong H_n(Y)$ and $\partial_n(X) \cong \partial_n(Y)$ in $\PVect$.
    \end{enumerate}
\end{remark}

\subsection[The structure theorem for factored chain complexes]{The structure theorem in $\TEPCh$}\label{sec:structure-thm}

In this subsection we study decompositions of factored chain complexes and prove the structure theorem.

Let $X$ be a parametrized chain complex. A \textbf{parametrized subcomplex} of $X$ is a parametrized chain complex $Y$ such that $Y^t$ is a subcomplex of $X^t$ for all $t$ and the internal maps in $Y$ are given by the restrictions of the internal maps in $X$. We write $Y \subseteq X$.
Given parametrized chain complexes $X,Y$, their \textbf{direct sum} $X\oplus Y$ is the parametrized chain complex defined by $(X\oplus Y)^t := X^t \oplus Y^t$ and $(X\oplus Y)^{s\le t} := X^{s\le t} \oplus Y^{s\le t}$. The direct sum between any finite number of parametrized chain complexes is defined analogously. 

One can check that if $X,Y$ are parametrized chain complexes, then $X\oplus Y$ is again a parametrized chain complex. Since the direct sum of epimorphisms is an epimorphism, it follows that the direct sum of epimorphic parametrized chain complexes is again epimorphic. Also if $X$ and $Y$ are both tame, then also $X\oplus Y$ is tame. Note that the set where $X\oplus Y$ fails to be left-constant is the union of the corresponding sets for $X$ and $Y$. All of these arguments continue to hold when we pass to direct sums of arbitrary (but still finite) size. To summarize, we can say that the categories $\PCh$, $\EPCh$, and $\TEPCh$ are closed under finite direct sums. 

\begin{lemma}\label{lem:par-subcomp}
    Let $X$ be a parametrized chain complex and let $Y \subseteq X$ with $Y$ epimorphic.
    \begin{enumerate}[$(i)$]
        \item\label{item:par-subcomp-1} If $X^{s\le t}$ is an isomorphism, then also $Y^{s\le t}$ is an isomorphism.
        \item\label{item:par-subcomp-2} If $X$ is right-constant at $t \in [0,\infty)$, then $Y$ is also right-constant at $t$.
        \item\label{item:par-subcomp-3} If $X$ is left-constant at $t \in [0,\infty)$, then $Y$ is also left-constant at $t$.
        \item\label{item:par-subcomp-4} If $X\in \TEPCh$, then also $Y \in \TEPCh$. 
    \end{enumerate}
\end{lemma}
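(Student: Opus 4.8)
The statement collects four claims about an epimorphic parametrized subcomplex $Y \subseteq X$, and I would prove them in the order listed, since later parts build on earlier ones. For part \eqref{item:par-subcomp-1}, the key observation is that at each degree $k$ we have a commutative square with $Y^s_k \hookrightarrow X^s_k$, $Y^t_k \hookrightarrow X^t_k$ as the vertical inclusions and $Y^{s\le t}_k$, $X^{s\le t}_k$ as the horizontal maps. Since $X^{s\le t}$ is an isomorphism, $X^{s\le t}_k$ is injective, hence its restriction $Y^{s\le t}_k$ is injective; and $Y^{s\le t}_k$ is surjective by the hypothesis that $Y$ is epimorphic. So $Y^{s\le t}_k$ is an isomorphism in every degree, hence $Y^{s\le t}$ is an isomorphism of chain complexes.

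Parts \eqref{item:par-subcomp-2} and \eqref{item:par-subcomp-3} are then immediate consequences of part \eqref{item:par-subcomp-1} together with the definitions of right- and left-constancy. For \eqref{item:par-subcomp-2}: since $X$ is right-constant at $t$, there is $\eps > 0$ such that $X^{t\le t+\delta}$ is an isomorphism for all $0 \le \delta \le \eps$; applying \eqref{item:par-subcomp-1} with $s = t$ and $t$ replaced by $t+\delta$ shows $Y^{t\le t+\delta}$ is an isomorphism for the same range of $\delta$, so $Y$ is right-constant at $t$ with the same $\eps$. For \eqref{item:par-subcomp-3}, if $t = 0$ there is nothing to prove; otherwise the same argument with the interval $[t-\delta, t]$ shows $Y^{t-\delta \le t}$ is an isomorphism for $0 \le \delta \le \eps$, so $Y$ is left-constant at $t$.

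Part \eqref{item:par-subcomp-4} then follows by assembling \eqref{item:par-subcomp-2} and \eqref{item:par-subcomp-3}: if $X \in \TEPCh$, then $X$ is right-constant everywhere, so by \eqref{item:par-subcomp-2} $Y$ is right-constant everywhere; and $X$ is left-constant at every $t$ outside some finite set $F$, so by \eqref{item:par-subcomp-3} $Y$ is left-constant at every $t \notin F$. Hence $Y$ is tame, and $Y$ is epimorphic by hypothesis, so $Y \in \TEPCh$. One should also note en passant that $Y$ lands in $\Ch$, i.e.\ each $Y^t$ is a compact non-negative chain complex — but this is automatic since $Y^t$ is a subcomplex of the chain complex $X^t \in \Ch$ and subcomplexes of compact non-negative chain complexes are compact and non-negative.

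\textbf{Main obstacle.} Honestly, there is no serious obstacle here: the whole lemma is a routine diagram-chase, and the only point requiring a moment's care is making sure in \eqref{item:par-subcomp-1} that injectivity of $Y^{s\le t}_k$ is inherited from $X^{s\le t}_k$ (this uses that $Y^s_k \to X^s_k$ is injective, i.e.\ that $Y$ is a genuine subcomplex, not just that there is some map to $X$) while surjectivity of $Y^{s\le t}_k$ comes from the epimorphic hypothesis on $Y$ rather than from $X$. The slight subtlety worth flagging is that "$X^{s\le t}$ is an isomorphism" for chain complexes means it is an isomorphism degreewise, so it is legitimate to argue one degree at a time and then reassemble.
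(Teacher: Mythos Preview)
Your proposal is correct and follows essentially the same approach as the paper: part \eqref{item:par-subcomp-1} is proved by noting that the restriction of an injective chain map to a subcomplex is injective (together with the epimorphic hypothesis for surjectivity), parts \eqref{item:par-subcomp-2} and \eqref{item:par-subcomp-3} are deduced from \eqref{item:par-subcomp-1}, and \eqref{item:par-subcomp-4} is assembled from the previous parts. Your write-up is simply more explicit than the paper's terse version, and your remark about $Y^t \in \Ch$ is a fair point the paper leaves implicit.
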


\begin{proof}
The statement $(\ref{item:par-subcomp-1})$ follows from the fact that the restriction of an injective chain map to a subcomplex is again injective. The statements $(\ref{item:par-subcomp-2})$ and $(\ref{item:par-subcomp-3})$ follow from $(\ref{item:par-subcomp-1})$. In order to show $(\ref{item:par-subcomp-4})$, we need to show that under the given assumptions, $Y$ is tame. We know that $X$ is tame, i.e.~right-constant everywhere and left-constant everywhere but on a finite set of points. By $(\ref{item:par-subcomp-2})$, also $Y$ is right-constant everywhere and by~$(\ref{item:par-subcomp-3})$, $Y$ can fail to be left-constant only at points where also $X$ fails to be left-constant, thus also only on a finite set of points. It follows that also $Y$ is tame and thus $Y \in \TEPCh$. 
\end{proof}

Now we prove some properties of the functors $H_n,\partial_n\colon \TEPCh \to \TPVect$.
In words, we show that both $H_n$ and $\partial_n$ preserve tameness and direct sums, while $\partial_n$ additionally preserves the property of being epimorphic. Applying $H_n$ or $\partial_n$ to interval functors in $\TEPCh$ yields interval functors in $\TPVect$.

\begin{proposition}\label{prop:par-hom-im}
    Let $X$ be a parametrized chain complex.
    \begin{enumerate}[$(i)$]
        \item\label{item:par-hom-im-1} If $X$ is tame, then both $H_n(X)$ and $\partial_n(X)$ are tame.
        \item\label{item:par-hom-im-2} If $X$ is epimorphic, then $\partial_n(X)$ is epimorphic.
        \item\label{item:par-hom-im-3} If $X = Y\oplus Z$, then $H_n(X) = H_n(Y) \oplus H_n(Z)$ and $\partial_n(X) = \partial_n(Y) \oplus \partial_n(Z)$.
        \item\label{item:par-hom-im-4} Interval functors in $\TEPCh$ induce interval functors in $\TPVect$ in the following way, where we use the convention that $\Fi[t,t)=0$ for any $t$.
        \[
        H_k(\I^n[0,s,t)) = \begin{cases}
            \Fi[s,t)    &\text{if } k=n,\\
            0           &\text{if } k \neq n,
        \end{cases}
        \qquad \text{and} \qquad
        \partial_k({\I^n[0,s,t)}) = \begin{cases}
            \Fi[0,s)    &\text{if } k=n,\\
            0           &\text{if } k \neq n.
        \end{cases}
        \]
    \end{enumerate}
\end{proposition}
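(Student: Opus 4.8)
The plan is to verify the four claims essentially by unwinding the definitions of $H_n$ and $\partial_n$ on parametrized chain complexes, leaning on the corresponding well-understood facts at the level of individual chain complexes. For $(\ref{item:par-hom-im-1})$, the key observation is that if an internal map $X^{s\le t}$ is an isomorphism of chain complexes, then by functoriality of $H_n$ and $\partial_n$ (already recorded in the remark preceding this subsection) the induced maps $H_n(X)^{s\le t}=H_n(X^{s\le t})$ and $\partial_n(X)^{s\le t}=X^{s\le t}_{n-1}|_{\partial_n(X)^s}$ are isomorphisms as well. Using the characterization of tameness via a finite subdivision $0=t_0<\cdots<t_r<t_{r+1}=\infty$ on which all internal maps are isomorphisms, the same subdivision witnesses tameness of $H_n(X)$ and $\partial_n(X)$; one should just double-check right-constancy at $0$ separately, which is immediate.

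For $(\ref{item:par-hom-im-2})$, I would argue that if $\partial_n^{X^s}\colon X^s_n \to X^s_{n-1}$ and $\partial_n^{X^t}\colon X^t_n\to X^t_{n-1}$ are the differentials and $X^{s\le t}$ is surjective in every degree, then any element of $\operatorname{im}(\partial_n^{X^t})$, say $\partial_n^{X^t}(b)$ with $b\in X^t_n$, can be written as $\partial_n^{X^t}(X^{s\le t}_n(a))=X^{s\le t}_{n-1}(\partial_n^{X^s}(a))$ for a preimage $a$ of $b$; hence $\partial_n(X)^{s\le t}=X^{s\le t}_{n-1}|_{\partial_n(X)^s}$ maps $\partial_n(X)^s$ onto $\partial_n(X)^t$, i.e.\ $\partial_n(X)$ is epimorphic. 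Claim $(\ref{item:par-hom-im-3})$ follows because $H_n$ and $\partial_n$ commute with finite direct sums of chain complexes (homology of a direct sum is the direct sum of homologies, and the image of a direct sum of maps is the direct sum of the images), applied degreewise and compatibly with the internal maps; this is pure bookkeeping given the functoriality remark.

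The only claim requiring a genuine (though short) computation is $(\ref{item:par-hom-im-4})$. Here I would just evaluate $H_k$ and $\partial_k$ on the three relevant kinds of chain complexes $D^n$, $S^n$, and $0$, using the facts recorded earlier: $H_k(D^n)=0$ for all $k$, $H_k(S^n)=\Fi$ if $k=n$ and $0$ otherwise, $\operatorname{im}(\partial_k^{D^n})=\Fi$ if $k=n$ and $0$ otherwise (since $\partial_n^{D^n}=\one_\Fi$), and $\operatorname{im}(\partial_k^{S^n})=0$ always. Plugging in the values of $\I^n[0,s,t)^r$ — namely $D^n$ for $0\le r<s$, $S^n$ for $s\le r<t$, and $0$ otherwise — gives $H_n(\I^n[0,s,t))^r=\Fi$ exactly for $s\le r<t$ and $0$ otherwise, which is the vector space pattern of $\Fi[s,t)$; and $\partial_n(\I^n[0,s,t))^r=\Fi$ exactly for $0\le r<s$ and $0$ otherwise, the pattern of $\Fi[0,s)$. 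One must then check the internal maps agree: the maps $\Psi^n$ and $\one_{S^n}$ induce the identity $\Fi\to\Fi$ on $H_n$ exactly on the range $s\le q\le r<t$ (and $\one_{D^n}$ induces $0$ on $H_n$, consistent with $D^n$ having trivial homology), matching $\Fi[s,t)$; dually, $\one_{D^n}$ restricts to the identity on $\operatorname{im}(\partial_n^{D^n})$ for $0\le q\le r<s$ while $\Psi^n$ kills the image (since $\Psi^n$ is zero in degree $n-1$), matching $\Fi[0,s)$. The degenerate conventions ($\Fi[t,t)=0$ when $s=t$, and in degree $n=0$ where $u=s$ forces the disk part to be absent) fall out automatically. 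I do not expect any real obstacle; the most error-prone point is simply getting the half-open interval endpoints and the $n=0$ edge case right, so I would state the endpoint bookkeeping carefully.
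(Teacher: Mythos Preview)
Your proposal is correct and follows essentially the same approach as the paper's proof: for each of $(\ref{item:par-hom-im-1})$--$(\ref{item:par-hom-im-3})$ you invoke exactly the same functoriality/direct-sum facts, and for $(\ref{item:par-hom-im-4})$ you perform the same pointwise computation of $H_k$ and $\partial_k$ on $D^n$, $S^n$, and $0$. If anything, you are slightly more explicit than the paper in verifying that the internal maps of $H_n(\I^n[0,s,t))$ and $\partial_n(\I^n[0,s,t))$ match those of the claimed interval functors.
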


\begin{proof}
    To see that $(\ref{item:par-hom-im-1})$ holds, note that if $X^{s\le t}$ is an isomorphism of chain complexes, then both $H_n(X^{s\le t})$ and the restriction of $X^{s\le t}_{n-1}$ to $\partial_n(X^s_n)$ are isomorphisms of vector spaces.

    To show $(\ref{item:par-hom-im-2})$, let $b \in \partial_n(X^t_n) \subseteq X^t_{n-1}$. Then $b=\partial a$ for some $a \in X^t_n$. Since $X^{s\le t}$ is an epimorphism, there exists $a' \in X^s_n$ with $X^{s\le t}_n(a')=a$. Thus we have $b= X^{s\le t}_{n-1}(\partial a')$. This shows that the internal map $\partial_n(X)^{s\le t}$ is an epimorphism for any $s\le t \in [0,\infty)$.

    It follows from standard arguments in homological algebra that homology and boundary both preserve direct sums. This proves $(\ref{item:par-hom-im-3})$.

    In order to show $(\ref{item:par-hom-im-4})$, note that for the $n$-disk $D^n$ and the $n$-sphere $S^n$ in $\Ch$ it holds that 
    \[
    H_k(D^n)=0, \text{ for all } k,
    \quad 
    H_k(S^n) = \begin{cases}
        \Fi, \text{ if } k=n,\\
        0, \text{ if } k\neq n,
    \end{cases}
    \quad
    \partial_k(D^n) = \begin{cases}
        \Fi, \text{ if } k=n,\\
        0, \text{ if } k\neq n,
    \end{cases}
    \quad 
    \partial_k(S^n)=0, \text{ for all } k.
    \]
    From this and the fact that both homology and the boundary map the identity to the identity, $(\ref{item:par-hom-im-4})$ follows.
\end{proof}

We introduce a new notation for the parametrized subcomplex induced by a single element.
Note that by applying \Cref{lem:non-zero-parmaps} in the case $s=t=\infty$, we get an isomorphism
\[
\Hom(\I^n[0,\infty,\infty),X) \overset{\cong}{\longrightarrow} X^0_n.
\]
Denote the inverse of this isomorphism by $\Phi\colon X^0_n \to \Hom(\I^n[0,\infty,\infty),X)$.

\begin{definition}
    For $X \in \PCh$ and $a \in X^0_n$, the parametrized subcomplex $\langle a \rangle$ of $X$ is the image of the parametrized chain map $\Phi(a)\colon \I^n[0,\infty,\infty) \to X$.
\end{definition}

Explicitly, this means that for $t \in [0,\infty)$, the parametrized chain complex $\langle a \rangle^t$ is given by
\[
\langle a \rangle^t_n = \Span(X_n^{0\le t}(a)), \qquad 
\langle a \rangle^t_{n-1} = \Span(X_{n-1}^{0\le t}(\partial a)), \qquad
\langle a \rangle^t_k = 0 \text{ if } k\neq n,{n-1}.
\]
The differential in degree $n$ is given by the restriction of the differential $\partial^t_n\colon X^t_n \to X^t_{n-1}$, all the other differentials are zero. The internal maps $\langle a \rangle^{s \le t}$ are given by the restrictions of the internal maps of $X$. Note that the dependence of $\langle a \rangle$ from $X$ is hidden from the notation.
Alternatively, $\langle a \rangle$ can be characterized as the smallest parametrized subcomplex of $X$ containing $a$.

Given a parametrized chain complex $X$ and an element $a \in X^0_n$, we define its \textbf{death time} as $d(a) := \inf \{s \ge 0 \mid X^{0\le s}_n(a)=0\}$. We also say that $a$ \textbf{dies at time} $d(a)$. We start by observing some properties of death times that will be useful later. In the following we adopt the convention that $\max\{\lambda,\infty\}=\infty$ for any number $\lambda$, and $\max(\emptyset)=0$.

\begin{lemma}\label{lem:death-time-linear-combination}
    For $X \in \PCh$, elements $a_1,\ldots,a_k \in X^0_n$, and scalars $\lambda_1,\ldots,\lambda_k \in \Fi$, we have 
    \begin{enumerate}[(i)]
        \item\label{item:death-time-lin-comb-1} $d(\lambda_1 a_1 +\cdots+ \lambda_k a_k) \le \max \{ d(a_i) \mid \lambda_i \neq 0\}$.
        \item\label{item:death-time-lin-comb-2} If $d(a_1),\ldots,d(a_k)$ are distinct, then $d(\lambda_1 a_1 +\cdots+ \lambda_k a_k) = \max \{ d(a_i) \mid \lambda_i \neq 0\}$.
        \item\label{item:death-time-lin-comb-3} If $d(a_1),\ldots,d(a_k)$ are distinct and none of them is zero, then $a_1,\ldots,a_k$ are linearly independent.
    \end{enumerate}
\end{lemma}

\begin{proof}
    The inequality in $(\ref{item:death-time-lin-comb-1})$ holds simply because if $s\ge d(a_i)$ for all $i$, then $X^{0\le s}_n(\lambda_1 a_1 +\cdots+ \lambda_k a_k) = \lambda_1 a_1^s +\cdots+ \lambda_k a_k^s =0$.

    For $(\ref{item:death-time-lin-comb-2})$, assume now that the death times are all distinct. Without loss of generality assume that $d(a_1)< \cdots < d(a_k)$, with $\lambda_k \neq 0$. Then, for any $d(a_{k-1}) < s < d(a_k)$, we have $X^{0\le s}_n(\lambda_1 a_1 +\cdots+ \lambda_k a_k) = \lambda_k a_k^s \neq 0$. This shows that $d(\lambda_1 a_1 +\cdots+ \lambda_k a_k) \ge d(a_k)$.

    Finally, for $(\ref{item:death-time-lin-comb-3})$, assume by contradiction that $a_1,\ldots,a_k$ are linearly dependent, i.e. there exists $i$ such that we can write $a_i$ as a linear combination of the other $a_j$. Then, by $(\ref{item:death-time-lin-comb-2})$, the death time of $a_i$ is among the death times of the other $a_j$, contradicting the fact that they are $k$ distinct values.
\end{proof}

\begin{proposition}\label{prop:par-span}
    Let $X \in \PCh$ and let $0\neq a \in X^0_n$. If $X$ is right-constant at $s:=d(\partial a)$ and at $t:=d(a)$, then $\langle a \rangle$ is isomorphic to $\I^n[0,s,t)$, with $t>0$. In particular, $\langle a \rangle$ is a factored chain complex.
\end{proposition}

\begin{proof}
    Since $X$ is right-constant at $s$ and $t$, it follows that $X^{0\le s}_{n-1}(\partial a)=0$ and $X^{0\le t}_n(a)=0$. Therefore, by \Cref{lem:non-zero-parmaps}, there exists a parametrized chain map $\varphi\colon \I^n[0,s,t) \to X$ with $\varphi^0_n(1_n)=a$. One can check that the image of $\varphi$ is precisely $\langle a \rangle$ and that $\varphi^r_k$ is injective for all $r$ and $k$. Therefore, it follows that $\varphi$ is an isomorphism from $\I^n[0,s,t)$ to $\langle a \rangle$. Note that $t>0$, because otherwise $a=0$ would follow, since $X$ is right-constant at $t$.
\end{proof}

We want to formulate and prove a structure theorem for factored chain complexes. More precisely, we want to prove that every factored chain complex is isomorphic to a direct sum of interval functors in $\TEPCh$. We begin with a statement from linear algebra that will turn out to be useful for this.

\begin{proposition}\label{prop:direct-sum-image}
    Let $f\colon V \twoheadrightarrow W$ be a surjective linear map and $A,B \subseteq V$ linear subspaces such that $V=A\oplus B$ and $\ker(f) \subseteq B$. Then $W = f(A) \oplus f(B)$.
\end{proposition}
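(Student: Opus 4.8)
The claim is that surjectivity of $f$ together with the direct sum decomposition $V = A \oplus B$ and the containment $\ker(f) \subseteq B$ force $W = f(A) \oplus f(B)$. Since $f$ is surjective and $V = A + B$, we immediately get $W = f(V) = f(A) + f(B)$, so the only real content is the directness, i.e.\ that $f(A) \cap f(B) = \{0\}$.

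First I would take an arbitrary $w \in f(A) \cap f(B)$ and write $w = f(a)$ with $a \in A$ and $w = f(b)$ with $b \in B$. Then $f(a - b) = 0$, so $a - b \in \ker(f) \subseteq B$. Hence $a = (a-b) + b \in B$ as well, so $a \in A \cap B = \{0\}$, giving $a = 0$ and therefore $w = f(a) = 0$. This shows $f(A) \cap f(B) = \{0\}$, and combined with $W = f(A) + f(B)$ we conclude $W = f(A) \oplus f(B)$.

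There is essentially no obstacle here: the argument is a short diagram-chase in linear algebra, and the key step is simply the observation that $\ker(f) \subseteq B$ lets one absorb the discrepancy $a - b$ into $B$, which then collapses $a$ to $0$ via $A \cap B = \{0\}$. If one wanted to be more careful about the structure being used, one could note that the hypothesis actually gives $f$ restricted to $A$ is injective (its kernel is $A \cap \ker(f) \subseteq A \cap B = \{0\}$), but the direct computation above is the cleanest route and does not even need this remark explicitly.
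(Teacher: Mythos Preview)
Your proof is correct and follows essentially the same argument as the paper: both first observe $W=f(A)+f(B)$ from surjectivity, then prove directness by taking $w=f(a)=f(b)$, noting $a-b\in\ker(f)\subseteq B$, and concluding $a\in A\cap B=\{0\}$.
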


\begin{proof}
    Denote by $\Tilde{f}\colon V/\ker(f) \to W$ the isomorphism induced by $f$. Note that the decomposition $V=A\oplus B$ implies that $V/\ker(f) = \frac{A}{A\cap \ker(f)} \oplus \frac{B}{B\cap \ker(f)} = A \oplus B/\ker(f)$. Therefore,
    \[
    W = \Tilde{f}(V/\ker(f)) = \Tilde{f}(A\oplus B/\ker(f)) = \Tilde{f}(A) \oplus \Tilde{f}(B/\ker(f)) = f(A)\oplus f(B). \qedhere
    \]
\end{proof}

Next we state a result that says that we can split off an interval functor from any factored chain complex. This is the heart of the existence part of the structure theorem. As before, for any $r\ge 0$, we denote the differentials in the chain complex $X^r$ by $\partial^r_k\colon X^r_k \to X^r_{k-1}$. Moreover we write $a^r := X^{0\le r}_n(a)$ and $\partial a^r := X^{0\le r}_{n-1}(\partial^0_n a) = \partial^r_n(a^r)$. 

\begin{lemma}\label{lem:split-off-interval}
    Let $X \in \TEPCh$ and $a \in X^0_n \setminus \im(\partial_{n+1})$ satisfying the condition
    \[
    d(a) = \min \{d(a') \mid a' \in X^0_n,\ d(\partial a') = d(\partial a) \}.
    \] 
    Then there exists a factored subcomplex $Y \subseteq X$ such that $X \cong \langle a \rangle \oplus Y$. 
\end{lemma}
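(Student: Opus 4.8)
The plan is to build the summand $Y$ degree-by-degree, using \Cref{prop:par-span} to identify $\langle a\rangle$ with an interval functor $\I^n[0,s,t)$, and then to produce at each parameter value $r$ a complement of $\langle a\rangle^r$ inside $X^r$ that is compatible with both the internal maps and the differential. The key observation is that $\langle a\rangle$ is concentrated in degrees $n$ and $n-1$, so the complement $Y$ can be taken to agree with $X$ in all degrees $k\neq n,n-1$, and the work is entirely about choosing complements of $\langle a\rangle^r_n\subseteq X^r_n$ and of $\langle a\rangle^r_{n-1}\subseteq X^r_{n-1}$ coherently in $r$.

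First I would record the structure of $\langle a\rangle$: since $X$ is tame (hence right-constant everywhere) and $a\notin\im(\partial_{n+1})$ forces $a\neq 0$, \Cref{prop:par-span} gives $\langle a\rangle\cong\I^n[0,s,t)$ with $0<t$, where $s=s(a)$ and $t=t(a)$ as defined there; so $\langle a\rangle^r_n$ is $1$-dimensional for $r<t$ and $0$ afterwards, while $\langle a\rangle^r_{n-1}$ is $1$-dimensional for $r<s$ and $0$ afterwards. By tameness of $X$, fix a finite set of parameter values $0=t_0<t_1<\dots<t_m$ off which $X$ (and hence $\langle a\rangle$) is locally constant, refining it so that $s$ and $t$ appear among the $t_i$'s. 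It then suffices to choose the complement on each of the finitely many "breakpoints'' and propagate it; concretely I would construct, for each $i$, subspaces $Y^{t_i}_n\subseteq X^{t_i}_n$ and $Y^{t_i}_{n-1}\subseteq X^{t_i}_{n-1}$ and then define $Y^r$ on the whole interval $[t_i,t_{i+1})$ by pushing forward along the (isomorphisms on the relevant pieces, epimorphisms in general) internal maps.

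The construction of the complements is where I would use the two linear-algebra lemmas. Proceeding by induction on $i$: at stage $i$, I have $Y^{t_{i-1}}_\bullet$ already chosen; I want $Y^{t_i}_\bullet$ containing the image $X^{t_{i-1}\le t_i}(Y^{t_{i-1}})$ and also containing $X^{0\le t_i}(\text{previously-used generators})$, i.e.\ I want the new complement to absorb the images of the old one. For degree $n-1$: I need $X^{t_i}_{n-1}=\langle a\rangle^{t_i}_{n-1}\oplus Y^{t_i}_{n-1}$ with $Y^{t_i}_{n-1}$ containing the two prescribed subspaces; since $\langle a\rangle^{t_i}_{n-1}$ meets each of them trivially (it is at most $\Span$ of the image of $\partial a$, which is disjoint from the complement-images by the inductive choice), \Cref{prop:direct-sum} produces such a $Y^{t_i}_{n-1}$. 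For degree $n$: I must additionally ensure the differential $\partial^{X^{t_i}}_n$ maps $Y^{t_i}_n$ into $Y^{t_i}_{n-1}$; so I first pick $Y^{t_i}_n$ as a complement of $\langle a\rangle^{t_i}_n$ via \Cref{prop:direct-sum} containing the prescribed images, then adjust each chosen basis vector $y$ of $Y^{t_i}_n$ by subtracting a scalar multiple of the generator $X^{0\le t_i}_n(a)$ of $\langle a\rangle^{t_i}_n$ (when that generator is nonzero, i.e.\ $t_i<t$) so that $\partial y$ lands in $Y^{t_i}_{n-1}$ — this is possible because $\partial(\langle a\rangle^{t_i}_n)$ is exactly $\langle a\rangle^{t_i}_{n-1}$ when $t_i<s$, and when $s\le t_i<t$ we have $\partial a$ already $0$ so no adjustment is needed there; the subtraction does not change the fact that $Y^{t_i}_n$ is a complement. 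One then checks, using \Cref{prop:direct-sum-image} and the fact that $\ker$ of the internal maps sits inside the complement by construction, that pushing these choices forward over $[t_i,t_{i+1})$ yields $X^r=\langle a\rangle^r\oplus Y^r$ for all $r$ in that range, with the decomposition respected by internal maps and differentials.

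I would then assemble $Y^\bullet$ into a parametrized chain complex (internal maps and differentials being restrictions of those of $X$, which is legitimate by the compatibilities just verified), note that $Y$ is epimorphic as a parametrized subcomplex of a thing admitting epimorphic internal maps compatible with the splitting — more precisely, $Y\oplus\langle a\rangle\cong X$ is epimorphic and a direct summand of an epimorphic parametrized object is epimorphic — and hence $Y\in\TEPCh$ by \Cref{lem:par-subcomp}$(\ref{item:par-subcomp-4})$, while $\langle a\rangle\cong\I^n[0,s,t)\in\TEPCh$ by \Cref{prop:par-span}. The main obstacle I anticipate is precisely the coherence of the complements across breakpoints together with the simultaneous requirement that $\partial$ respect the splitting: the degree-$n$ adjustment step (subtracting off a multiple of the image of $a$) must be shown to be both always possible and compatible with the images coming from the previous breakpoint, and the bookkeeping that $\ker$ of each internal map lies in the complement — which is what makes \Cref{prop:direct-sum-image} applicable and makes the pushed-forward decomposition actually direct — needs care; the role of the hypothesis $a\notin\im(\partial_{n+1})$ is to guarantee that no element of $\langle a\rangle^0_{n-1}$ is hit from degree $n$ outside $\langle a\rangle$, so that the degree-$(n-1)$ complement can genuinely be chosen disjoint from $\langle a\rangle$ at parameter $0$ and the induction can start.
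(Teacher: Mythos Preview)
Your overall strategy---build complements of $\langle a\rangle$ in degrees $n$ and $n-1$, keep $Y=X$ elsewhere, and use \Cref{prop:direct-sum} and \Cref{prop:direct-sum-image}---is the right one, but the execution has a genuine gap and differs from the paper's cleaner route.

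\textbf{The gap.} You never address the differential $\partial_{n+1}$. Since you set $Y^r_{n+1}=X^r_{n+1}$, for $Y^r$ to be a subcomplex you need $\im(\partial^r_{n+1})\subseteq Y^r_n$ for every $r$. Nothing in your inductive construction guarantees this: you choose $Y^{t_i}_n$ to contain the image of $Y^{t_{i-1}}_n$ and then adjust so that $\partial_n$ lands in $Y^{t_i}_{n-1}$, but you do not force $Y^{t_i}_n$ to absorb $\im(\partial^{t_i}_{n+1})$. This is exactly where the hypothesis $a\notin\im(\partial_{n+1})$ enters, and your stated role for it is off: the condition says nothing about $\langle a\rangle^0_{n-1}$ being ``hit from degree $n$ outside $\langle a\rangle$'' (indeed $\partial a$ can be hit by many things). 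What it does say is that $\Span(a)\cap\im(\partial^0_{n+1})=\{0\}$, so one can choose the complement $Y^0_n$ to contain $\im(\partial^0_{n+1})$; by epimorphicity this then propagates to all $r$. Without this, $Y$ is not a subcomplex.

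\textbf{Comparison with the paper.} The paper avoids both your induction over breakpoints and your adjustment step entirely. It chooses $Y^0_n$ \emph{once}, as a complement of $\Span(a)$ that contains both $\ker(X^{0\le t_0}_n)$ and $\ker(\partial^{s_0}_n\circ X^{0\le s_0}_n)$ (the latter automatically contains $\ker\partial^0_n\supseteq\im\partial^0_{n+1}$), and then simply defines $Y^r_n:=X^{0\le r}_n(Y^0_n)$ for all $r$. The first kernel condition makes \Cref{prop:direct-sum-image} apply at every $r<t$, so the pushed-forward sum stays direct; the second makes $\im\partial^0_n=\Span(\partial a)\oplus\partial^0_n(Y^0_n)$ direct (again by \Cref{prop:direct-sum-image}), so one can then choose $Y^0_{n-1}$ containing $\partial^0_n(Y^0_n)$ with no adjustment needed. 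Your approach can be repaired by adding $\im(\partial^0_{n+1})$ to the list of subspaces absorbed into $Y^0_n$ at the base of the induction, but at that point the breakpoint induction and the adjustment step become redundant: once $Y^0$ is chosen with these kernel conditions, pushing forward from $0$ gives everything.
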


\begin{proof}
    Since $a \notin \im(\partial_{n+1})$, we have $a\neq 0$ and thus, by \Cref{prop:par-span}, there exists a tagged interval $[0,s_1,t_1) \in \J$, such that $\langle a \rangle \cong \I^n[0,s_1,t_1)$. By tameness of $X$, if $s_1>0$, there exist $s_0 < s_1$ and $t_0 < t_1$, such that for all $s_0\le s < s_1$ and for all $t_0 \le t < t_1$, the internal chain maps $X^{s_0\le s}$ and $X^{t_0\le t}$ are isomorphisms. In the case $s_1=0$, we do not choose $s_0$ and in the case $s_1=t_1$, we may choose $s_0=t_0$. We construct a direct summand $Y$ of $X$, complementary to $\langle a \rangle$, in five steps.

    \begin{enumerate}
        \item We define $Y$ to be all of $X$ in all degrees where $\langle a \rangle$ is zero. Explicitly: For $k\neq n,n-1$ and any $r\ge 0$, we define $Y^r_k := X^r_k$.

        \item\label{item:direct-summand-step-2} 
        Let $Y^0_n \subseteq X^0_n$ be a linear subspace such that 
        \[
        \ker(X^{0\le t_0}_n) \subseteq Y^0_n,
        \qquad \qquad
        \ker(\partial^{s_0}_n \circ X^{0\le s_0}_n) \subseteq Y^0_n,
        \qquad \text{and} \qquad
        X^0_n = \langle a \rangle^0_n \oplus Y^0_n
        \]
        (in the case $s_1=0$ we simply drop the second condition). In terms of death times, the first two conditions are saying that any element in $X^0_n$ that dies before $a$ or whose boundary dies before $\partial a$ has to be contained in $Y^0_n$. Of course we need to argue why such a subspace exists. Note that $\langle a \rangle^0_n \cap (\ker(X^{0\le t_0}_n) + \ker(\partial^{s_0}_n \circ X^{0\le s_0}_n)) =\{0\}$, since otherwise we could write $a=b_1+b_2$ with $b_1^{t_0}=0$ and $\partial b_2^{s_0} =0$. In particular $d(b_1)<t_1=d(a)$ and $d(\partial b_2)<s_1=d(\partial a)$. This would imply that $d(\partial b_1)=d(\partial a - \partial b_2) = \max\{d(\partial a),d(\partial b_2)\} = d(\partial a)$ by \Cref{lem:death-time-linear-combination}, hence violating the condition that the death time of $a$ is minimal among all elements whose boundaries have the same death time as $\partial a$. By standard linear algebra arguments there exists a subspace $Y^0_n \subseteq X^0_n$ that contains $\ker(X^{0\le t_0}_n) + \ker(\partial^{s_0}_n \circ X^{0\le s_0}_n)$ and is a complement of $\langle a \rangle^0_n$ in $X^0_n$.

        \item For any $r\ge 0$, we define $Y^r_n := X^{0\le r}_n(Y^0_n)$. It follows from the previous step and \Cref{prop:direct-sum-image} that $X^r_n = \langle a \rangle^r_n \oplus Y^r_n$ for all $r<t_1$. For larger values of $r$, it follows from $a^r=0$ and surjectivity of $X^{0\le r}_n$ that $Y^r_n = X^r_n$.

        \item\label{item:direct-summand-step-4} In the case $s_1=0$, we define $Y^0_{n-1}:=X^0_{n-1}$. Otherwise, let $Y^0_{n-1} \subseteq X^0_{n-1}$ be a linear subspace such that 
        \[
        \partial^0_n(Y^0_n) \subseteq Y^0_{n-1},
        \qquad \qquad
        \ker(X^{0\le s_0}_{n-1}) \subseteq Y^0_{n-1},
        \qquad \text{and} \qquad
        X^0_{n-1} = \langle a \rangle^0_{n-1} \oplus Y^0_{n-1}.
        \]
        Again, for such a subspace to exist it is sufficient to show that $\langle a \rangle^0_{n-1} \cap (\partial^0_n(Y^0_n) + \ker(X^{0\le s_0}_{n-1})) =\{0\}$. If that were not the case, then $\partial a = \partial b + c$ for some $b \in Y^0_n$ and $c \in X^0_{n-1}$ with $c^{s_0}=0$. In particular $d(c)< d(\partial a)$. Defining $a':=a-b$ would then yield an element of $X^0_n$ with $d(\partial a')=d(\partial a - \partial b) = d(c)< d(\partial a)$. This would mean that $(\partial a')^{s_0}=0$, thus $a' \in Y^0_n$. But, since $b \in Y^0_n$, this would imply that $a=a'-b \in Y^0_n$, contradicting the fact that $\langle a \rangle^0_n \cap Y^0_n =0$. As in step \ref{item:direct-summand-step-2} we can now construct $Y^0_{n-1} \subseteq X^0_{n-1}$ by standard linear algebra arguments.

        \item For any $r\ge 0$, we define $Y^r_{n-1} := X^{0\le r}_{n-1}(Y^0_{n-1})$. If follows from the previous step and \Cref{prop:direct-sum-image} that $X^r_{n-1
        }= \langle a \rangle^r_{n-1
        }\oplus Y^r_{n-1
        }$ for all $r<s_1$. For larger values of $r$, it follows from $\partial a^r=0$ and surjectivity of $X^{0\le r}_{n-1}$ that $Y^r_{n-1} = X^r_{n-1}$.
    \end{enumerate}
    So far we have constructed linear subspaces $Y^r_k \subseteq X^r_k$ for all $r,k$ with the property that $X^r_k = \langle a \rangle^r_k \oplus Y^r_k$. The internal maps of $Y$, both vertical and horizontal, are defined to be the restrictions of the corresponding maps in $X$. It remains to check that $Y$ is indeed a tame epimorphic parametrized subcomplex of $X$. 
    
    Let us first check that $Y^0$ is a subcomplex of $X^0$, i.e.~$\partial^0_k(Y^0_k) \subseteq Y^0_{k-1}$ for all $k$. We only need to check those degrees where $Y^0_{k-1} \neq X^0_{k-1}$, so we need to check that $\partial^0_{n+1}(Y^0_{n+1}) \subseteq Y^0_n$ and $\partial^0_n(Y^0_n) \subseteq Y^0_{n-1}$. The latter is true by construction of $Y^0_{n-1}$ in step \ref{item:direct-summand-step-4} above. If $s_1=0$, then the former holds because in that case $Y^0_{n-1}=X^0_{n-1}$. Otherwise, it holds because
    \[
    \partial^0_{n+1}(Y^0_{n+1})
    = \partial^0_{n+1}(X^0_{n+1})
    \subseteq \ker(\partial^0_n)
    \subseteq \ker(X^{0\le s_0}_{n-1} \circ \partial^0_n)
    = \ker(\partial^{s_0}_n \circ X^{0\le s_0}_n)
    \subseteq Y^0_n,
    \]
    where the last inclusion holds by construction of $Y^0_n$ in step \ref{item:direct-summand-step-2} above. To show that $Y^r$ is a subcomplex of $X^r$ for any $r\ge 0$, first note that the restriction of $X^{0\le r}_k$ to $Y^0_k$ yields a surjective map $Y^0_k \twoheadrightarrow Y^r_k$, by construction of $Y^r_k$. We can use this fact together with the knowldege that $Y^0$ is a subcomplex of $X^0$ to show that $Y^r$ is a subcomplex of $X^r$. Thus we have shown that $Y$ is an epimorphic parametrized subcomplex of $X$. Tameness of $Y$ follows from \Cref{lem:par-subcomp} $(\ref{item:par-subcomp-4})$.
\end{proof}

Now we are ready to prove the structure theorem in $\TEPCh$.

\begin{theorem}[Structure theorem in $\TEPCh$]\label{thm:structure-theorem}
    Any factored chain complex $X$ is isomorphic to a finite direct sum of interval functors in $\TEPCh$, i.e.~for every $n\in \N$, there exists a unique finite multiset $\tBarc_n = \tBarc_n(X) \in \Mult(\J)$ of tagged intervals, such that 
    \[
    X \cong \bigoplus_{n \in \N} \bigoplus_{[0,s,t) \in \tBarc_n} \I^n[0,s,t).
    \]
\end{theorem}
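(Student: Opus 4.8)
The plan is to prove the two assertions of the theorem—existence of the decomposition and uniqueness of the multisets $\tBarc_n$—separately, relying on the splitting lemma (\Cref{lem:split-off-interval}) for existence and on the functoriality of $H_n$ and $\partial_n$ together with the structure theorem in $\TPVect$ (\Cref{thm:str-thm-vect}) for uniqueness.

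For existence, I would argue by induction on the total dimension $\sum_{n,r} \dim X^r_n$, which is finite because $X$ is tame (so it has finitely many isomorphism classes among its $X^r$, each of finite total dimension). If this quantity is zero, $X=0$ and the empty direct sum works. Otherwise, pick the largest $n$ for which $X^0_n\neq 0$; then $\im(\partial_{n+1}^{X^0})=0$, so any nonzero $a\in X^0_n$ lies in $X^0_n\setminus\im(\partial_{n+1})$. Actually, to make the induction go through cleanly I would be slightly more careful: choose $n$ maximal with $X^0_n \neq 0$ and any $0 \neq a \in X^0_n$; then \Cref{lem:split-off-interval} gives $X \cong \langle a\rangle \oplus Y$ with $\langle a\rangle \cong \I^n[0,s,t)$ by \Cref{prop:par-span}~$(\ref{item:par-span-3})$ (using tameness of $X$ to guarantee right-constancy at $s$ and $t$), and $Y \in \TEPCh$ by \Cref{lem:par-subcomp}~$(\ref{item:par-subcomp-4})$. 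Since $\langle a\rangle$ is nonzero, $Y$ has strictly smaller total dimension, so by induction $Y$ decomposes as a finite direct sum of interval functors, and hence so does $X$. Collecting the summands by homological degree $n$ defines the multisets $\tBarc_n$.

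For uniqueness, suppose $X \cong \bigoplus_{n}\bigoplus_{[0,s,t)\in\tBarc_n}\I^n[0,s,t)$. Applying the functors $H_k$ and $\partial_k$, which preserve direct sums (\Cref{prop:par-hom-im}~$(\ref{item:par-hom-im-3})$) and isomorphisms, and using the explicit computation of $\Cref{prop:par-hom-im}~(\ref{item:par-hom-im-4})$, I get
\[
H_k(X) \cong \bigoplus_{[0,s,t)\in\tBarc_k}\Fi[s,t)
\qquad\text{and}\qquad
\partial_k(X) \cong \bigoplus_{[0,s,t)\in\tBarc_k}\Fi[0,s).
\]
By the structure theorem in $\TPVect$ (\Cref{thm:str-thm-vect}), the barcode $\Barc(H_k(X))$ is the well-defined multiset $\{[s,t) : [0,s,t)\in\tBarc_k\}$ (dropping trivial intervals where $s=t$), and $\Barc(\partial_k(X))$ is $\{[0,s) : [0,s,t)\in\tBarc_k,\ s>0\}$. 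These two multisets of intervals are invariants of $X$ (independent of the chosen decomposition), and from them one recovers $\tBarc_k$ as a multiset of triples: the intervals $[s,t)$ with $s>0$ pair up uniquely with the intervals $[0,s)$ sharing the same right endpoint $s$—note here that genericity is not needed because within a fixed degree $k$ the left endpoint $0$ is common, so the value $s$ is what must be matched, and a counting argument (the multiplicity of $s$ as a left endpoint in $\Barc(H_k(X))$ equals its multiplicity as a right endpoint in $\Barc(\partial_k(X))$, reflecting that every disk in $X^0$ eventually collapses) shows the pairing is forced—while the intervals $[s,t)$ in $\Barc(H_k(X))$ with $s=0$ account for the tagged intervals $[0,0,t)$. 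Hence $\tBarc_k$ is uniquely determined.

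The main obstacle I expect is the bookkeeping in the uniqueness argument: one must check that the multiset $\tBarc_k$ is genuinely reconstructible from the pair $(\Barc(H_k(X)),\Barc(\partial_k(X)))$, i.e. that no information is lost in passing to homology and boundary and that the matching of "birth-of-sphere" data in $H_k$ with "death-of-disk" data in $\partial_k$ is unambiguous. The key point making this work is that for a single interval functor $\I^n[0,s,t)$ the sphere $S^n$ is born exactly when the disk $D^n$ dies (both at time $s$), so within degree $k$ the left endpoints of $\Barc(H_k(X))$ that are positive are in bijection, with multiplicity, with the right endpoints of $\Barc(\partial_k(X))$; this bijection is canonical because both multisets are indexed by the same (unknown but fixed) collection of tagged intervals. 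I would also need to double-check the edge cases $s=0$, $t=\infty$, and $s=t=\infty$ against the conventions in \Cref{def:interval} and \Cref{prop:par-hom-im}~$(\ref{item:par-hom-im-4})$, but these are routine once the main reconstruction principle is in place.
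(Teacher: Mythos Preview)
Your approach is essentially the paper's, for both existence (induction plus \Cref{lem:split-off-interval}) and uniqueness (reconstruct $\tBarc_k$ from $\Barc(H_k(X))$ and $\Barc(\partial_k(X))$ via \Cref{thm:str-thm-vect} and \Cref{prop:par-hom-im}). Two small corrections are needed.

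First, your induction quantity $\sum_{n,r}\dim X^r_n$ is infinite for any nonzero tame $X$, since $r$ ranges over an uncountable set; the paper uses $m(X):=\sum_n \dim X^0_n$, which is finite and strictly decreases after splitting off $\langle a\rangle$ because $\langle a\rangle^0\neq 0$.

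Second, your parenthetical claim that the multiplicity of $s>0$ as a left endpoint in $\Barc(H_k(X))$ equals its multiplicity as a right endpoint in $\Barc(\partial_k(X))$ is false exactly when tagged intervals $[0,s,s)$ with $0<s<\infty$ occur: such an interval contributes a bar $[0,s)$ to $\Barc(\partial_k(X))$ but only the trivial interval $[s,s)=\emptyset$ to $\Barc(H_k(X))$. So your ``pairing'' description does not directly recover these summands. The paper's bookkeeping handles this cleanly: $\Barc(H_k(X))$ determines $\tBarc_k([0,s,t))$ for all $s<t$, and since $\Barc(\partial_k(X))([0,s))=\sum_{t\ge s}\tBarc_k([0,s,t))$, subtracting the already-known terms with $t>s$ yields $\tBarc_k([0,s,s))$.
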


The collection of multisets $\tBarc(X)=\left(\tBarc_n(X)\right)_{n\in \N}$ is called the \textbf{ tagged barcode} of $X$. Sometimes we will call $\tBarc_n(X)$ the tagged barcode in degree $n$ (or $n$-th tagged barcode).

\begin{proof}[Proof of existence]
    The existence of such a decomposition can be shown by iterated use of \Cref{lem:split-off-interval}. Formally, we do a proof by induction over the number $m(X) := \sum_k \dim(X^0_k)$. 
    
    If $m(X)=0$, then $X=0$ and the statement holds for $X$, since then, $X$ is isomorphic to the empty direct sum, i.e.~$\tBarc_n(X)=\emptyset$ for all $n$. 

    Assume now that $m(X)=m>0$ and that we have already proven the statement for all $Z\in \TEPCh$ with $m(Z)< m$. Since $X\neq 0$, there must exist some $n\ge0$ and $a \in X^0_n \setminus \im(\partial_{n+1})$ whose death time is minimal among all elements whose boundary has the same death time as $\partial a$. By tameness of $X$, there can be only finitely many different death times among the elements of $X^0_n$, thus an element with minimal death time always exists (since $X^0_n$ is finite-dimensional, this actually also follows from \Cref{lem:death-time-linear-combination} $(\ref{item:death-time-lin-comb-3})$ even without the tameness assumption). By \Cref{lem:split-off-interval}, we have $X=\langle a \rangle \oplus Y$ for some $Y \in \TEPCh$. Clearly $m(Y)<m(X)$ and thus $Y$ can be written as a finite direct sum of interval functors in $\TEPCh$. Furthermore, $\langle a \rangle$ is isomorphic to an interval functor in $\TEPCh$ by \Cref{prop:par-span}, thus also $X$ is isomorphic to a finite direct sum of interval functors in $\TEPCh$.
\end{proof}

\begin{proof}[Proof of uniqueness]
    Assume that there are two collections of multisets $\tBarc_k$ and $\tBarc'_k$, $k\in \N$, such that 
    \[
    X:= \bigoplus_{k \in \N} \bigoplus_{[0,s,t) \in \tBarc_k} \I^k[0,s,t) \cong \bigoplus_{k \in \N} \bigoplus_{[0,s,t) \in \tBarc'_k} \I^k[0,s,t) =:Y.
    \]
    Fix a number $n\in \N$. By functoriality of $H_n(\square)$ and $\partial_n(\square)$, this implies that $H_n(X) \cong H_n(Y)$ and $\partial_n(X) \cong \partial_n(Y)$. By \Cref{prop:par-hom-im} $(\ref{item:par-hom-im-3})$ and $(\ref{item:par-hom-im-4})$, we have 
    \[
    H_n(X)= \bigoplus_{\substack{[s,t) \in \tBarc_n, \\ s<t}} \Fi[s,t)
    \qquad \text{and} \qquad
    \partial_n(X) = \bigoplus_{\substack{[s,t) \in \tBarc_n, \\ s>0}}\Fi[0,s).
    \]
    This implies that we can obtain the barcodes of $H_n(X)$ and $\partial_n(X)$ by adding, for every tagged interval $[0,s,t) \in \tBarc_n$, an interval of the form $[0,s)$ to $\Barc(\partial_n(X))$ and, if $s\neq t$, an interval of the form $[s,t)$ to $\Barc(H_n(X))$.
    Applying the definition that a multiset is a function, where the function value is interpreted as the multiplicity of a (tagged) interval, the barcodes of $H_n(X)$ and $\partial_n(X)$ are given by
    \[
    \Barc(H_n(X))([s,t)) = \begin{cases}
        \tBarc_n([0,s,t)), \text{ if } s<t, \\
        0, \text{ if } s=t,
    \end{cases} 
    \text{and} \quad 
    \Barc(\partial_n(X))([r,s)) = \begin{cases}
        \sum_{s\le t} \tBarc_n([0,s,t)), \text{ if } r=0, \\
        0, \text{ if } r>0.
    \end{cases}
    \]
    We can apply the same reasoning to $Y$ to get the analogous expressions for $\Barc(H_n(Y))$ and $\Barc(\partial_n(Y))$, just with $\tBarc_n$ replaced by $\tBarc_n'$. By the uniqueness in \Cref{thm:str-thm-vect} we have $\Barc(H_n(X))=\Barc(H_n(Y))$ and $\Barc(\partial_n(X))=\Barc(\partial_n(Y))$. The first equality implies that 
    \[
    \tBarc_n([0,s,t)) = \tBarc_n'([0,s,t)) \text{ for all } s<t \in [0,\infty],
    \]
    so it only remains to show that $\tBarc_n$ and $\tBarc_n'$ also agree on tagged intervals of the form $[0,s,s)$. For this, consider
    \begin{align*}
        \Barc(\partial_n(X))([0,s)) = \sum_{s\le t} \tBarc_n([0,s,t)) &= \tBarc_n([0,s,s)) + \sum_{s<t} \tBarc_n([0,s,t))\\
        \rotatebox{90}{=} \hspace{4.4cm} &\\
        \Barc(\partial_n(Y))([0,s)) = \sum_{s\le t} \tBarc_n'([0,s,t)) &= \tBarc_n'([0,s,s)) + \sum_{s<t} \tBarc_n'([0,s,t)).
    \end{align*}
    Since we have seen before that $\tBarc_n$ and $\tBarc_n'$ agree on tagged intervals $[0,s,t)$ with $s<t$, it follows from this chain of equations that also $\tBarc_n([0,s,s))=\tBarc_n'([0,s,s))$. Thus we have shown that $\tBarc_n=\tBarc_n'$ and since $n$ was chosen arbitrarily, this concludes the proof.
\end{proof}

The following corollary gives sufficient conditions for the existence of certain elements in the tagged barcode of a factored chain complex.

\begin{corollary}\label{cor:barcode}
    Let $X \in \TEPCh$ be such that all of its internal chain maps induce isomorphisms in homology in all degrees and let $s<t \in [0,\infty)$ be such that 
    \begin{enumerate}[$(i)$]
        \item for all $s< s' <t$ the internal chain map $X^{s\le s'}$ is an isomorphism, \label{item:barcode-1}
        \item $\dim(X^t_k) = \dim(X^s_k)-1$ for $k=n,n-1$ and $\dim(X^t_k) = \dim(X^s_k)$ for all other values of $k$. \label{item:barcode-2}
    \end{enumerate}
    Then the tagged interval $[0,t,t)$ appears with multiplicity 1 in the tagged barcode $\tBarc_n(X)$ and does not appear (i.e. has multiplicity 0) in all other tagged barcodes $\tBarc_m(X)$, with $m\neq n$. Moreover, for any $s<t'<t$, the tagged interval $[0,t',t')$ does not appear in the tagged barcode of $X$.
\end{corollary}

\begin{proof}
    By \Cref{thm:structure-theorem}, $X$ is a direct sum of interval functors in $\TEPCh$. Since all internal chain maps of $X$ induce isomorphisms in homology, only tagged intervals of the form $[0,t',t')$, for $0<t'\le \infty$, can appear in the tagged barcode of $X$. By $(\ref{item:barcode-2})$, in the decomposition of $X$ there must be precisely one interval functor $\I^n[0,t',t')$, and no interval functor $\I^m[0,t',t')$ for $m\neq n$, with $s<t'\le t$. Condition $(\ref{item:barcode-1})$ prevents the case $t'<t$, so the only remaining possibility is what was claimed.
\end{proof}

\subsection[The isometry theorem for factored chain complexes]{The isometry theorem in $\TEPCh$}\label{sec:iso-thm-tepch}

\paragraph{The interleaving distance.}

The interleaving distance in $\TEPCh$ is precisely the one that we introduced in larger generality in \Cref{sec:par-obj}, applied to the category $\C=\Ch$. Here we give some explicit formulae for computing the interleaving distance between interval functors in $\TEPCh$.

\begin{remark}\label{rem:shift-of-interval}
    It follows from the definition of $\I^n[0,s,t)$ and the $\eps$-shift that for any $[0,s,t) \in \J$ and $\eps \ge 0$ we have
    \[
    \I^n[0,s,t)_\eps = 
        \I^n[0,\max\{s-\eps,0\},\max\{t-\eps,0\}),
    \]
    where $\I^n[0,0,0) := 0$.
\end{remark}

\begin{lemma}\label{lem:intervals-tepch-eps-interleaved}
    Given $\eps>0$ and two tagged intervals $J=[0,s,t)$ and $J'=[0,s',t')$, the corresponding interval functors in $\TEPCh$, i.e.~$\I^n[0,s,t)$ and $\I^n[0,s',t')$, are $\eps$-interleaved if and only if
    \[
    \eps \ge \min \left\{ \max\left\{|s-s'|,|t-t'|\right\}, \max\left\{\frac{t}{2}, \frac{t'}{2} \right\} \right\},
    \]
    where we use the convention that $|\infty-\infty|=0$.
\end{lemma}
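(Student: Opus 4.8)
The plan is to prove the two implications separately, in each direction leaning on two facts already available: \Cref{rem:shift-of-interval}, which identifies the $\delta$-shift of an interval functor in $\TEPCh$ again as an interval functor (namely $\I^n[0,s,t)_\delta=\I^n[0,\max\{s-\delta,0\},\max\{t-\delta,0\})$), and \Cref{lem:non-zero-morphism}, which tells us that $\Hom(\I^n[0,a,b),\I^n[0,a',b'))$ is one-dimensional when $a\ge a'$ and $b\ge b'$ and vanishes otherwise. By symmetry of the right-hand side of the asserted inequality I may assume $t\ge t'$, and I write $X=\I^n[0,s,t)$, $Y=\I^n[0,s',t')$. A preliminary observation I would record is that the $2\eps$-shift natural transformation $X\to X_{2\eps}$ is the zero morphism precisely when $t\le 2\eps$, because at time $r$ its component is nonzero only when $r<t-2\eps$; likewise for $Y$. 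In particular $X^{\cdot\le\cdot+2\eps}=0=Y^{\cdot\le\cdot+2\eps}$ as soon as $\eps\ge\max\{t/2,t'/2\}$.

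For the direction ``$\eps$-interleaved $\Rightarrow$ inequality'', let $(\phi,\psi)$ be an $\eps$-interleaving with $\phi\colon X\to Y_\eps$ and $\psi\colon Y\to X_\eps$, and split on whether both $\phi$ and $\psi$ are nonzero. If one of them, say $\phi$, is the zero morphism, then $\psi_\eps\circ\phi=0$ must equal $X^{\cdot\le\cdot+2\eps}$ and (since $\phi_\eps=0$) $\phi_\eps\circ\psi=0$ must equal $Y^{\cdot\le\cdot+2\eps}$, forcing $t\le 2\eps$ and $t'\le 2\eps$, i.e. $\eps\ge\max\{t/2,t'/2\}$. If instead $\phi\ne 0$ and $\psi\ne 0$, then $\Hom(X,Y_\eps)\ne 0$; as $Y_\eps=\I^n[0,\max\{s'-\eps,0\},\max\{t'-\eps,0\})$ by \Cref{rem:shift-of-interval}, this rules out $Y_\eps=0$, so $Y_\eps$ is a genuine interval functor and \Cref{lem:non-zero-morphism} yields $s\ge\max\{s'-\eps,0\}$ and $t\ge\max\{t'-\eps,0\}$, hence $s\ge s'-\eps$ and $t\ge t'-\eps$. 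The symmetric argument applied to $\psi$ gives $s'\ge s-\eps$ and $t'\ge t-\eps$, so $\max\{|s-s'|,|t-t'|\}\le\eps$. In either case $\eps\ge\min\{\max\{|s-s'|,|t-t'|\},\max\{t/2,t'/2\}\}$.

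For the direction ``inequality $\Rightarrow$ $\eps$-interleaved'', suppose $\eps$ is at least the asserted minimum. If $\eps\ge\max\{t/2,t'/2\}$, then by the preliminary observation $X^{\cdot\le\cdot+2\eps}=0=Y^{\cdot\le\cdot+2\eps}$, so the pair $(0,0)$ of zero morphisms is trivially an $\eps$-interleaving. Otherwise $\eps\ge\max\{|s-s'|,|t-t'|\}$, and I may further assume $t'>\eps$ (if $t'\le\eps$ then $t\le t'+\eps\le 2\eps$ and $t'\le 2\eps$, which puts us in the previous case). Then $X_\eps$ and $Y_\eps$ are nonzero interval functors, and since $s\ge s'-\eps$, $t\ge t'-\eps$ and symmetrically, \Cref{lem:non-zero-morphism} provides nonzero $\phi\colon X\to Y_\eps$ and $\psi\colon Y\to X_\eps$, which I would normalize so that their degree-$n$ components at time $0$ are the identity of $\Fi$. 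It then remains to verify the two interleaving identities; the key point is that $\psi_\eps\circ\phi$ and $\phi_\eps\circ\psi$ lie in $\Hom(X,X_{2\eps})$ and $\Hom(Y,Y_{2\eps})$, which by \Cref{lem:non-zero-morphism} are at most one-dimensional and, when nonzero, are spanned by the respective $2\eps$-shift natural transformations; hence it suffices to compare both sides on a single generator — concretely in degree $n$ at time $0$, where a short naturality computation shows every relevant component is the identity of $\Fi$ — to conclude equality.

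The step I expect to require the most care is this last verification: one must ensure the explicitly chosen maps satisfy the interleaving identities exactly rather than merely up to a nonzero scalar, and one must correctly handle the degenerate sub-cases in which $X_{2\eps}$, $Y_{2\eps}$, $X_\eps$, or $Y_\eps$ vanishes. The reduction ``at most one-dimensional $\Hom$-space, so check on one generator'' is what makes this tractable, but keeping straight the various vanishing thresholds ($t$ versus $\eps$ versus $2\eps$, and similarly for $t'$) is where errors are easy to make.
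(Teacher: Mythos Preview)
Your proof is correct and follows essentially the same approach as the paper: both directions use \Cref{rem:shift-of-interval} and \Cref{lem:non-zero-morphism} in the same way, with the zero-map interleaving handling the case $\eps\ge\max\{t/2,t'/2\}$ and the nonzero $\Hom$-space handling $\eps\ge\max\{|s-s'|,|t-t'|\}$. The only organizational differences are that you prove the forward direction by a direct case split on whether $\phi,\psi$ are zero (the paper argues by contrapositive), and you make explicit the ``one-dimensional $\Hom$, so compare on a generator'' argument for the interleaving identities where the paper simply writes ``one can then check''.
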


\begin{proof}
    If $\frac{t}{2},\frac{t'}{2} \le \eps$, then the internal maps of length $2\eps$ are zero in both $\I^n[0,s,t)$ and $\I^n[0,s',t')$. Therefore, the zero maps are an $\eps$-interleaving between $\I^n[0,s,t)$ and $\I^n[0,s',t')$.

    If $|s-s'|,|t-t'| \le \eps$, then, by \Cref{rem:shift-of-interval} and \Cref{lem:non-zero-morphism}, there exist non-zero parametrized chain maps $\phi\colon \I^n[0,s,t) \to \I^n[0,s',t')_\eps$ and $\psi\colon \I^n[0,s',t') \to \I^n[0,s,t)_\eps$. More explicitly, we let $\phi$ and $\psi$ be the maps constructed in the proof of \Cref{lem:non-zero-morphism}. One can then check that the compositions $\psi_\eps \circ \phi$ and $\phi_\eps\circ \psi$ agree with the internal maps of length $2\eps$ in $\I^n[0,s,t)$ and $\I^n[0,s',t')$. 

    Let us now assume that $\eps < \min \left\{ \max\left\{|s-s'|,|t-t'|\right\}, \max\left\{\frac{t}{2}, \frac{t'}{2} \right\} \right\}$ and show that in this case there exists no $\eps$-interleaving. Since $\eps < \max\left\{\frac{t}{2},\frac{t'}{2}\right\}$, we can assume without loss of generality that $\eps < \frac{t}{2}$. Therefore, there exist internal maps in $\I^n[0,s,t)$ of length $2\eps$ which are non-zero. Therefore, any $\eps$-interleaving must consist of non-zero morphisms. However, since $\eps < \max\left\{|s-s'|,|t-t'| \right\}$, it follows from \Cref{rem:shift-of-interval} and \Cref{lem:non-zero-morphism} that no non-zero morphism with a shift of $\eps$ can exist in one of the two directions.
\end{proof}

\begin{corollary}\label{cor:intervals-tepch-interleaved}
    Two interval functors $\I^n[0,s,t)$ and $\I^n[0,s',t')$ in $\TEPCh$ are interleaved if and only if one of the following cases holds:
    \begin{enumerate}[$(i)$]
        \item $t,t'<\infty$,\label{item:type-1}
        \item $t=t'=\infty$ and $s,s'<\infty$,\label{item:type-2}
        \item $t=t'=s=s'=\infty$.\label{item:type-3}
    \end{enumerate}
    More generally, if $X,Y \in \TEPCh$ are interleaved, then it follows that the tagged barcodes of $X$ and $Y$ in each degree have the same amount of intervals of the form $[0,s,\infty)$ with $s<\infty$ and of the form $[0,\infty,\infty)$.
\end{corollary}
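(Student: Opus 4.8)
The first assertion is a direct unpacking of \Cref{lem:intervals-tepch-eps-interleaved}. By that lemma, $\I^n[0,s,t)$ and $\I^n[0,s',t')$ are $\eps$-interleaved exactly when $\eps\ge\min\{\max\{|s-s'|,|t-t'|\},\max\{t/2,t'/2\}\}$, so they are interleaved for some $\eps\ge 0$ if and only if this minimum is finite, i.e.\ if and only if $t,t'<\infty$, or else $|t-t'|<\infty$ and $|s-s'|<\infty$. I would then unpack the two convention-dependent conditions: using $|\infty-\infty|=0$, one has $|t-t'|<\infty$ precisely when $t,t'$ are both finite or both equal to $\infty$, and similarly for $s,s'$. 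Since $s\le t$ and $s'\le t'$, the case ``$t,t'$ finite'' forces $s,s'$ finite, and the case ``$s=s'=\infty$'' forces $t=t'=\infty$; sorting this out leaves exactly the three possibilities $(\ref{item:type-1})$, $(\ref{item:type-2})$, $(\ref{item:type-3})$.

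For the last statement, the plan is to transport the interleaving through the functors $H_n$ and $\partial_n$ of \Cref{sec:par-vec-ind-from-par-chain} and then read off the conclusion from the structure theorem. First I would observe that $H_n$ and $\partial_n$ commute with the $\eps$-shift, in the sense that $H_n(Z_\eps)=H_n(Z)_\eps$ and $\partial_n(Z_\eps)=\partial_n(Z)_\eps$ for every parametrized chain complex $Z$; this is immediate because both sides are defined pointwise. Combined with functoriality, it follows that an $\eps$-interleaving $(\phi,\psi)$ between $X$ and $Y$ is sent to an $\eps$-interleaving between $H_n(X)$ and $H_n(Y)$, and to one between $\partial_n(X)$ and $\partial_n(Y)$; hence $d_I(H_n(X),H_n(Y))\le d_I(X,Y)<\infty$, and likewise for $\partial_n$. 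By \Cref{prop:par-hom-im} these are all tame parametrized vector spaces, so by the isometry theorem in $\TPVect$ (\Cref{thm:iso-thm-tpvect}) the bottleneck distances $d_B(\Barc(H_n(X)),\Barc(H_n(Y)))$ and $d_B(\Barc(\partial_n(X)),\Barc(\partial_n(Y)))$ are finite.

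Next I would invoke the weight function on intervals: an unbounded interval $[a,\infty)$ has weight $\frac{\infty-a}{2}=\infty$, so, fixing a matching of finite cost (which exists as the bottleneck distance is finite), such an interval can neither be left unmatched nor matched to a bounded interval. Hence the matching restricts to a bijection between the unbounded intervals of $\Barc(H_n(X))$ and those of $\Barc(H_n(Y))$, so these are equal in number, and similarly for the $\partial_n$-barcodes. It then remains to translate these counts into statements about $\tBarc_n$, using the computations in the uniqueness part of the proof of \Cref{thm:structure-theorem}: the unbounded intervals of $\Barc(H_n(X))$ are exactly the $[s,\infty)$ with multiplicity $\tBarc_n(X)([0,s,\infty))$ for $s<\infty$, so their total number is the number of tagged intervals of $\tBarc_n(X)$ of the form $[0,s,\infty)$ with $s<\infty$; and every interval of $\Barc(\partial_n(X))$ begins at $0$, so its only unbounded interval is $[0,\infty)$, occurring with multiplicity $\sum_{\infty\le t}\tBarc_n(X)([0,\infty,t))=\tBarc_n(X)([0,\infty,\infty))$. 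The same identities for $Y$, combined with the equalities of counts just obtained, give the claim.

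There is no serious obstacle here: the argument is bookkeeping once \Cref{lem:intervals-tepch-eps-interleaved} and \Cref{thm:structure-theorem} are in hand. The two points requiring a little care are the $\infty$-arithmetic in the case analysis for the first assertion — it is precisely the convention $|\infty-\infty|=0$ that makes cases $(\ref{item:type-2})$ and $(\ref{item:type-3})$ possible — and the verification that $H_n$ and $\partial_n$ commute with the shift functor so that interleavings are transported; both are immediate from the pointwise definitions, but they are what the proof hinges on.
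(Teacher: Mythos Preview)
Your argument is correct. For the first assertion your unpacking of \Cref{lem:intervals-tepch-eps-interleaved} matches the paper's proof essentially verbatim.

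For the second assertion you take a genuinely different route. The paper argues directly at the level of chain complexes: by tameness there is a $T$ beyond which all internal maps of both $X$ and $Y$ are isomorphisms, and then an $\eps$-interleaving forces $X^T\cong Y^T$ (the interleaving maps become mutually inverse once the internal maps are invertible, using finite-dimensionality). The count of $n$-disks and $n$-spheres in $X^T$ then reads off the number of tagged intervals $[0,\infty,\infty)$ and $[0,s,\infty)$ with $s<\infty$, respectively. Your approach instead pushes the interleaving through the functors $H_n$ and $\partial_n$, invokes the isometry theorem in $\TPVect$, and then extracts the counts from the barcode formulas in the uniqueness proof of \Cref{thm:structure-theorem}. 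The paper's argument is more self-contained (it does not rely on \Cref{thm:iso-thm-tpvect}) and slightly shorter; yours is more systematic, and makes explicit the general principle that any functor $\PCh\to\PVect$ commuting with shifts is nonexpansive for the interleaving distances, which is worth knowing in its own right.
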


\begin{proof}
    By \Cref{lem:intervals-tepch-eps-interleaved}, the two interval functors in $\TEPCh$ are interleaved if and only if $\max\{|s-s'|,|t-t'|\}<\infty$ or $\max\left\{\frac{t}{2},\frac{t'}{2}\right\}<\infty$. Note that the former is equivalent to saying that $s$ and $s'$ are either both finite or both infinite and $t$ and $t'$ are either both finite or both infinite. The statement  $\max\left\{\frac{t}{2},\frac{t'}{2}\right\}<\infty$ is equivalent to saying that $t$ and $t'$ are both finite, so it is a stronger statement and we can ignore it. Thus we get the three cases described in the statement: Either $t,t',s,s'$ are all finite, or $t,t'$ are infinite and $s,s'$ are finite, or $t,t',s,s'$ are all infinite.
    
    In order to prove the more general statement, note that by tameness there exists $T>0$ such that for all $T\le s \le t$, the internal maps $X^{s\le t}$ and $Y^{s\le t}$ are isomorphisms. From this it follows that the chain complexes $X^T$ and $Y^T$ are isomorphic and must thus have the same number of disks and spheres in their decomposition. The number of $n$-disks in the decomposition of $X^T$ is equal to the number of copies of $\I^n[0,\infty,\infty)$ in the decomposition of $X$ and the number of $n$-spheres in the decomposition of $X^T$ is equal to the number of copies of $\I^n[0,s,\infty)$ for any $s<\infty$ in the decomposition of $X$. The same holds for $Y^T$ and $Y$, hence the result follows.
\end{proof}

\paragraph{The bottleneck distance.}\label{sec:tagged-barcodes-bottleneck-dist}

Similar to the interleaving distance, the bottleneck distance is a special case of the generalized bottleneck distance described in \Cref{sec:par-obj}. We only need to give a pseudometric and a weight function on the set $\J$ of tagged intervals.

\begin{definition}
    Given two tagged intervals $J=[0,s,t)$ and $J'=[0,s',t')$, we define the \textbf{cost of matching $J$ to $J'$} as $c(J,J') := \max\{|s-s'|,|t-t'|\}$. The \textbf{weight of $J$} is defined as $W(J):=\frac{t}{2}$.
\end{definition}

One can check that $c$ is an extended metric on $\J$ and that $c$ and $W$ are compatible. Therefore, by \Cref{rem:bott-distance-ext-pseudometric}, the bottleneck distance $d_B$ is an extended pseudometric on $\Mult(\J)$. 

Note that the bottleneck distance of two multisets of tagged intervals from $\J$ is finite if and only if they contain the same number of tagged intervals of the form $[0,\infty,\infty)$ and of the form $[0,s,\infty)$ with $s<\infty$, respectively. This is analogous to the \Cref{cor:intervals-tepch-interleaved} about interleaving distances between interval functors in $\TEPCh$. 

\paragraph{The isometry theorem.}

We are now ready to state the isometry theorem and prove one of the two inequalities.

\begin{theorem}[Isometry Theorem in $\TEPCh$] \label{thm:iso-thm-tepch}
    If $X$ and $Y$ are factored chain complexes, then
    \[
    d_I(X,Y) = \max_{n\in \N} d_B(\tBarc_n(X),\tBarc_n(Y)).
    \]
\end{theorem}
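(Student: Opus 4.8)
The plan is to prove the two inequalities $d_I(X,Y) \le \max_n d_B(\tBarc_n(X),\tBarc_n(Y))$ and $d_I(X,Y) \ge \max_n d_B(\tBarc_n(X),\tBarc_n(Y))$ separately, following the classical strategy for persistence modules but adapted to the tagged setting. For the \textbf{``easy'' inequality} (interleaving $\le$ bottleneck), I would start from an $\eps$-matching $\M_n$ between $\tBarc_n(X)$ and $\tBarc_n(Y)$ for each $n$ realizing the bottleneck distance up to an arbitrarily small slack. Using the structure theorem (\Cref{thm:structure-theorem}), write $X$ and $Y$ as direct sums of interval functors. Each matched pair $(\I^n[0,s,t), \I^n[0,s',t'))$ is $\eps$-interleaved by \Cref{lem:intervals-tepch-eps-interleaved}, precisely because $\operatorname{cost}$ of the match is $\max\{|s-s'|,|t-t'|\}$ (and $c$-cost $\le \eps$ means this is $\le\eps$), while each unmatched interval $\I^n[0,s,t)$ with $W = t/2 \le \eps$ is $\eps$-interleaved with the zero complex (again by \Cref{lem:intervals-tepch-eps-interleaved}, taking the second term in the min). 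Taking the direct sum of all these interleavings — over all $n$ and over all matched/unmatched summands — yields an $\eps$-interleaving between $X$ and $Y$, since interleavings are closed under direct sums (the shift and composition conditions are checked componentwise). Letting the slack go to zero gives $d_I(X,Y) \le \max_n d_B(\tBarc_n(X),\tBarc_n(Y))$.

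For the \textbf{``hard'' inequality} (bottleneck $\le$ interleaving), the strategy is to leverage the already-established isometry theorem in $\TPVect$ (\Cref{thm:iso-thm-tpvect}) together with the functors $H_n, \partial_n \colon \TEPCh \to \TPVect$ from \Cref{sec:par-vec-ind-from-par-chain}. Given an $\eps$-interleaving between $X$ and $Y$, applying the functors $H_n$ and $\partial_n$ (which preserve the $\eps$-shift and send interleavings to interleavings, by functoriality and the fact that $(\square)_\eps$ commutes with post-composition by a functor) produces $\eps$-interleavings $H_n(X) \sim_\eps H_n(Y)$ and $\partial_n(X) \sim_\eps \partial_n(Y)$ in $\TPVect$. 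By \Cref{thm:iso-thm-tpvect}, we get $\eps$-matchings between $\Barc(H_n(X))$ and $\Barc(H_n(Y))$ and between $\Barc(\partial_n(X))$ and $\Barc(\partial_n(Y))$. The task is then to reassemble these into a single $\eps$-matching between $\tBarc_n(X)$ and $\tBarc_n(Y)$. Recall from the uniqueness proof of \Cref{thm:structure-theorem} that a tagged interval $[0,s,t) \in \tBarc_n(X)$ with $s<t$ contributes the interval $[s,t)$ to $\Barc(H_n(X))$ and the interval $[0,s)$ to $\Barc(\partial_n(X))$, while one with $s=t$ contributes only $[0,s)$ to $\Barc(\partial_n(X))$. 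So from the matching on the $H_n$-barcodes we recover how the ``$s<t$'' tagged intervals should be matched (a matched pair $[s,t) \leftrightarrow [s',t')$ corresponds to $[0,s,t) \leftrightarrow [0,s',t')$, and its $c$-cost is the same $\max\{|s-s'|,|t-t'|\}$), and from the matching on the $\partial_n$-barcodes we recover how to match the ``$s=t$'' tagged intervals and how to handle the left endpoints consistently.

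The \textbf{main obstacle} is this last reassembly step: the two matchings (on $H_n$-barcodes and on $\partial_n$-barcodes) are produced independently and need not be ``compatible'' in the sense that the left endpoint $s$ of a tagged interval is shared between its image in $\Barc(H_n(X))$ and its image in $\Barc(\partial_n(X))$. One must argue that one can choose the two matchings so that: (a) a tagged interval $[0,s,t)$ with $s<t$ whose $[s,t)$-part is matched in the $H_n$-barcodes automatically has its $[0,s)$-part matched consistently in the $\partial_n$-barcodes, and (b) the intervals $[0,s)$ in the $\partial_n$-barcodes that remain after removing those coming from ``$s<t$'' tagged intervals — i.e., those coming from the ``$s=t$'' tagged intervals — get matched among themselves with cost $\le\eps$. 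The cleanest route is probably not to quote \Cref{thm:iso-thm-tpvect} as a black box but to go back into the proof of the hard inequality in $\TPVect$ (the induction-on-the-number-of-intervals argument, or a canonical-matching argument) and run it simultaneously on the paired data $(H_n, \partial_n)$, exploiting that an $\eps$-interleaving of $X$ and $Y$ in $\TEPCh$ induces a morphism $X \to Y_\eps$ whose components on elements $a \in X^0_n$ control both $s(a)$ (via $\partial_n$) and $t(a)$ (via $H_n$) at once — this is exactly the content of \Cref{lem:non-zero-parmaps}. Alternatively, one can observe that the number of intervals $[0,s)$ in $\Barc(\partial_n(X))$ with a given left endpoint $0$ splits according to the multiplicity formula from the uniqueness proof, and a counting/exchange argument shows the pieces can be matched with the correct cost. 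I expect the authors to take the self-contained route, essentially redoing the $\TPVect$ hard inequality while carrying the extra tag along, since a naïve "combine two independent matchings" argument does not obviously give the cost bound.
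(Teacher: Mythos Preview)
Your easy inequality ($d_I \le \max_n d_B$) matches the paper's proof essentially verbatim: decompose via the structure theorem, use \Cref{lem:intervals-tepch-eps-interleaved} on matched pairs and on unmatched small intervals, and assemble the interleaving summand by summand.

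For the hard inequality, your route diverges from the paper's, and the obstacle you flag is real and not resolved by your sketch. Pushing an $\eps$-interleaving through $H_n$ and $\partial_n$ does produce $\eps$-matchings on $\Barc(H_n)$ and $\Barc(\partial_n)$, but these are obtained independently and need not be compatible. Concretely: if the $H_n$-matching pairs $[s,t)$ with $[s',t')$, you know $|s-s'|\le\eps$, so the induced partial matching on the $\partial_n$-barcodes has cost $\le\eps$; but there is no general principle guaranteeing that \emph{any} partial $\eps$-matching on a persistence barcode extends to a full $\eps$-matching, and a careless choice of $H_n$-matching (e.g.\ leaving short $[s,t)$ unmatched when they could be matched) can force the residual $\partial_n$-matching to exceed $\eps$. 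For instance, with $\tBarc_n(X)=\{[0,2,4),[0,3,3)\}$ and $\tBarc_n(Y)=\{[0,3,5),[0,2,2)\}$ one has $d_B=1$; the $H_n$-$\eps$-matching that leaves both $[2,4)$ and $[3,5)$ unmatched is legitimate at $\eps=1$, yet the only cost-$0$ $\partial_n$-matching then forces $[0,2,4)\leftrightarrow[0,2,2)$ and $[0,3,3)\leftrightarrow[0,3,5)$, each of cost $2$. So neither of your proposed fixes (running the $\TPVect$ argument ``simultaneously on the paired data'', or a counting/exchange argument) is made precise enough to close the gap, even though you correctly predict that a na\"ive combination fails.

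The paper instead argues directly on tagged intervals, adapting Bjerkevik's Hall-theorem method. Given an $\eps$-interleaving $(\phi,\psi)$, one defines for each $I\in\tBarc_n^\eps(X)$ the set $\mu_\eps(I)$ of $J\in\tBarc_n(Y)$ with $c(I,J)\le\eps$, and proves that $|\mu_\eps(A)|\ge|A|$ for every $A\subseteq\tBarc_n^\eps(X)$ (and symmetrically). This Hall condition is verified by ordering the intervals in $A$ by $\alpha([0,s,t)):=s+t$, expanding the interleaving identity $(\pi^X_{I'})_{2\eps}\circ\psi_\eps\circ\phi\circ\iota^X_I$ as a sum over $J\in\mu_\eps(A)$ using \Cref{lem:non-zero-morphism} and a monotonicity lemma (\Cref{lem:alpha-and-interleaving}), and reading off a triangular matrix identity whose rank forces $|\mu_\eps(A)|\ge|A|$. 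A two-sided Hall argument (\Cref{cor:Hall-marriage}) then yields a single $\eps$-matching covering all heavy intervals on both sides. This bypasses entirely the compatibility problem you ran into, at the price of reproving the hard direction from scratch rather than reducing to \Cref{thm:iso-thm-tpvect}.
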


\begin{proof}[Proof of the inequality $\le$]
    It suffices to prove that if for some $\eps>0$ there exist $\eps$-matchings between $\tBarc_n(X)$ and $\tBarc_n(Y)$ for all $n$, then there exists an $\eps$-interleaving between $X$ and $Y$. By \Cref{thm:structure-theorem}, we have that
    \[
    X \cong \bigoplus_{n\in \N} \left( \bigoplus_{[0,s,t) \in \tBarc_n(X)} \I^n[0,s,t) \right)
    \qquad \text{and} \qquad
    Y \cong \bigoplus_{n\in \N} \left( \bigoplus_{[0,s',t')\in \tBarc_n(Y)} \I^n[0,s',t') \right).
    \]
    Fix a number $\eps>0$ and assume that there exists an $\eps$-matching between $\tBarc_n(X)$ and $\tBarc_n(Y)$ for each $n$. From this, we want to construct an $\eps$-interleaving between $X$ and $Y$. If $[0,s,t) \in \tBarc_n(X)$ and $[0,s',t') \in \tBarc_n(Y)$ are matched, then $\max\{|s-s'|,|t-t'|\} \le \eps$. Therefore $\I^n[0,s,t)$ and $\I^n[0,s',t')$ are $\eps$-interleaved by \Cref{lem:intervals-tepch-eps-interleaved}. For intervals $[0,s,t) \in \tBarc_n(X)$ and $[0,s',t') \in \tBarc_n(Y)$ that are unmatched we have $t,t' \le 2\eps$, and therefore the internal maps of length $2\eps$ in $I^n[0,s,t)$ and $\I^n[0,s',t')$ are zero, thus they are $\eps$-interleaved with the zero parametrized chain complex.

    We now construct an $\eps$-interleaving between $X$ and $Y$. Given a interval $[0,s,t) \in \tBarc_n(X)$ that is matched with $[0,s',t') \in \tBarc_n(Y)$, we define $\phi$ on the summand $\I^n[0,s,t)$ to be the interleaving constructed in the proof of \Cref{lem:intervals-tepch-eps-interleaved}. For any unmatched interval $[0,s,t) \in \tBarc_n(X)$ we define $\phi$ to be zero on $\I^n[0,s,t)$. By extending linearly, this defines a parametrized chain map $\phi\colon X \to Y_\eps$. We define $\psi\colon Y \to X_\eps$ analogously. It follows that $(\phi,\psi)$ is an $\eps$-interleaving between $X$ and $Y$.
\end{proof}

For the other inequality, we mimic the proof given in \cite{Bjerkevik_StabilityDPS_21} as suggested by previous attempts \cite{StabFiltCh}. We begin by setting up the notation.

\begin{definition}
    Given $X \in \TEPCh$ and $\eps>0$, we define $\tBarc_n^\eps(X):=\{I \in \tBarc_n(X) \mid W(I)> \eps\}$. 
    %This of course depends on the value of $\eps$, but we hide this from the notation. %% This was for when we were writing * instead of \eps. 
    Given $X,Y \in \TEPCh$, we define
    for $I\in \tBarc_n^\eps(X)$ and for $A\subseteq \tBarc_n^\eps(X)$
    \[
    \mu_\eps(I) := \{J \in \tBarc_n(Y) \mid c(I,J) \le \eps \}
    \quad \text{ and } \quad
    \mu_\eps(A) := \bigcup_{I \in A}\mu(I).
    \]
    For $J \in \tBarc_n^\eps(Y)$ and $A \subseteq \tBarc_n^\eps(Y)$ the sets $\mu_\eps(J)\subseteq \tBarc_n(X)$ and $\mu_\eps(A)\subseteq \tBarc_n(X)$ are defined analogously.
\end{definition}

The proof of \Cref{thm:iso-thm-tepch} uses Hall's theorem. For this we recall some notions from graph theory. Given a bipartite graph $H$ with vertex set $L\sqcup R$, for any subset $A\subseteq L$ we define $\mu(A)$ to be the set of those vertices in $R$ that have an edge to at least one vertex in $A$. For $A\subseteq R$ we define $\mu(A)$ analogously. A \textbf{partial matching} $\M$ on $H$ is a set of edges of $H$ such that no two edges share a common vertex. We say that a matching $\M$ \textbf{covers} a set $A\subseteq L$ (or $A\subseteq R$) if every vertex of $A$ is contained in an edge of $\M$. Now we can state Hall's marriage theorem.

\begin{theorem}[Hall's marriage theorem \cite{HallMarriageTheorem}]\label{thm:hall-marriage}
    Let $H$ be a finite bipartite graph on the vertex set $L \sqcup R$. Then there exists a partial matching on $H$ that covers $L$ if and only if, for any subset $A\subseteq L$, we have $|\mu(A)| \ge |A|$.
\end{theorem}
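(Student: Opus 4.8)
This is the classical Hall marriage theorem, and the plan is to prove it by induction on $|L|$; the argument is entirely self-contained and does not rely on anything else in the paper. The forward implication is immediate: if $\M$ is a partial matching of $H$ covering $L$, then $\M$ assigns to each vertex of $L$ a distinct partner in $R$, so for $A\subseteq L$ the $|A|$ partners of the vertices of $A$ are distinct and all lie in $\mu(A)$, whence $|\mu(A)|\ge|A|$. The substance is the converse, which I would prove as follows. Assume Hall's condition $|\mu(A)|\ge|A|$ for all $A\subseteq L$ and induct on $n:=|L|$. If $n=0$ the empty matching works; if $n=1$, applying the condition to $L$ itself shows the unique vertex of $L$ has a neighbour, and that single edge is the desired matching. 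For the inductive step, suppose the statement holds whenever the left vertex set has fewer than $n$ elements, and split into two cases depending on whether Hall's condition holds with strict slack on every nonempty proper subset of $L$.

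Case 1: $|\mu(A)|\ge|A|+1$ for every nonempty proper subset $A\subsetneq L$. Choose any $x\in L$ and any neighbour $y$ of $x$ (which exists since $|\mu(\{x\})|\ge1$), and let $H'$ be the bipartite graph obtained from $H$ by deleting $x$, $y$, and all incident edges, with left vertex set $L'=L\setminus\{x\}$. For $A\subseteq L'$ we have $\mu_{H'}(A)\supseteq\mu_H(A)\setminus\{y\}$, so $|\mu_{H'}(A)|\ge|\mu_H(A)|-1\ge|A|$ by the slack hypothesis. Since $|L'|=n-1$, the induction hypothesis yields a partial matching of $H'$ covering $L'$, and adjoining the edge $xy$ gives a partial matching of $H$ covering $L$.

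Case 2: there is a nonempty proper subset $A_0\subsetneq L$ with $|\mu(A_0)|=|A_0|$. Let $H_1$ be the subgraph of $H$ induced on $A_0\sqcup\mu(A_0)$. For $B\subseteq A_0$ we have $\mu_{H_1}(B)=\mu_H(B)$ since $\mu_H(B)\subseteq\mu_H(A_0)$, so Hall's condition holds for $H_1$; as $|A_0|<n$, the induction hypothesis produces a partial matching $\M_1$ of $H_1$ covering $A_0$, and since it is a matching between $A_0$ and the equicardinal set $\mu(A_0)$, it saturates every vertex of $\mu(A_0)$. Now let $H_2$ be the subgraph of $H$ induced on $(L\setminus A_0)\sqcup(R\setminus\mu(A_0))$. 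For $B\subseteq L\setminus A_0$, every neighbour of $B$ lying outside $\mu(A_0)$ is a neighbour of $A_0\cup B$, so $\mu_{H_2}(B)=\mu_H(A_0\cup B)\setminus\mu(A_0)$; since $\mu_H(A_0\cup B)\supseteq\mu(A_0)$, this gives $|\mu_{H_2}(B)|=|\mu_H(A_0\cup B)|-|\mu(A_0)|\ge|A_0\cup B|-|A_0|=|B|$ by Hall's condition for $A_0\cup B$. Hence Hall's condition holds for $H_2$, and as $|L\setminus A_0|<n$, the induction hypothesis yields a partial matching $\M_2$ of $H_2$ covering $L\setminus A_0$. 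The edges of $\M_1$ and $\M_2$ use disjoint vertex sets, so $\M_1\cup\M_2$ is a partial matching of $H$ covering $L$.

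The step I expect to require the most care is Case 2: one must verify that Hall's condition is inherited by \emph{both} pieces $H_1$ and $H_2$ — in particular the set identity $\mu_{H_2}(B)=\mu_H(A_0\cup B)\setminus\mu(A_0)$ and the cardinality bookkeeping that follows from it — and that the two partial matchings can be glued without conflict, which hinges precisely on $\M_1$ saturating all of $\mu(A_0)$ so that $\M_2$, living on $R\setminus\mu(A_0)$, cannot collide with it. Everything else (the base cases, the forward direction, Case 1) is routine.
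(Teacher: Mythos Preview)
Your proof is correct and is the standard inductive argument for Hall's theorem. Note, however, that the paper does not give its own proof of this statement: it simply cites Hall's original paper and uses the result as a black box (the paper only proves the slight strengthening in the subsequent corollary, where matchings covering subsets $L^*\subseteq L$ and $R^*\subseteq R$ simultaneously are constructed via an alternating-path argument). So there is nothing to compare against here; your self-contained induction is a perfectly good justification of the cited result.
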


We will need a slightly stronger version of this theorem, which we formulate in the following corollary.

\begin{corollary}\label{cor:Hall-marriage}
    Let $H$ be a finite bipartite graph on the vertex set $L \sqcup R$ and assume we are given two subsets $L^*\subseteq L$ and $R^*\subseteq R$. Then there exists a partial matching on $H$ that covers both $L^*$ and $R^*$ if and only if, for any subset $A\subseteq L^*$ or $A\subseteq R^*$, we have $|\mu(A)| \ge |A|$.
\end{corollary}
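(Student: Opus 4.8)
The plan is to deduce \Cref{cor:Hall-marriage} from \Cref{thm:hall-marriage} by producing two one-sided matchings and then merging them. The ``only if'' direction is immediate: if a partial matching $\M$ covers both $L^*$ and $R^*$ and $A$ is a subset of $L^*$ (or of $R^*$), then $\M$ restricts to an injection from $A$ into $\mu(A)$ sending each vertex to its partner, so $|\mu(A)| \ge |A|$. For the ``if'' direction, first apply \Cref{thm:hall-marriage} to the subgraph of $H$ induced on $L^* \sqcup R$: for $A \subseteq L^*$ the neighbourhood of $A$ there equals $\mu(A)$, so the hypothesis is exactly Hall's condition and we obtain a partial matching $\M_1$ of $H$ covering $L^*$. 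Running the same argument with the two sides of $H$ interchanged, i.e.\ applying \Cref{thm:hall-marriage} to the subgraph induced on $L \sqcup R^*$ with $R^*$ in the role of the saturated side, yields a partial matching $\M_2$ of $H$ covering $R^*$. What remains is to merge $\M_1$ and $\M_2$ into a single partial matching covering both $L^*$ and $R^*$.

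To merge, I would look at $\M_1 \triangle \M_2$ as a subgraph of $H$: every vertex has degree at most two in it, so its components are alternating paths and alternating (hence even) cycles. Form a new partial matching $\M$ by keeping all edges of $\M_1 \cap \M_2$; keeping, on each alternating cycle, its $\M_1$-edges, which cover every vertex of the cycle; and keeping, on each alternating path $P$, one of the two maximal alternating sub-matchings, chosen as follows. An interior vertex of $P$ is covered by either choice, so the only issue is at the two endpoints of $P$. If $P$ has an odd number of edges, one of the two sub-matchings is a perfect matching of $P$ and we take it. If $P$ has an even number of edges, its two endpoints lie on the same side of $H$, one incident to an $\M_1$-edge and one incident to an $\M_2$-edge, and each of the two sub-matchings covers all of $P$ except exactly one endpoint; we pick the one that misses the endpoint incident to an $\M_2$-edge when the endpoints lie in $L$, and the one that misses the endpoint incident to an $\M_1$-edge when they lie in $R$. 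Since edges from distinct components are vertex-disjoint, $\M$ is a partial matching on $H$.

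The crux — and the step I expect to require the most care — is checking that the endpoint we deliberately leave uncovered is never a required vertex, i.e.\ the legitimacy of the last choice above. The key observation is that an endpoint $v$ of a component of $\M_1 \triangle \M_2$ has degree one there, so its unique incident edge lies in exactly one of $\M_1 \setminus \M_2$ or $\M_2 \setminus \M_1$, and then $v$ is in fact not covered \emph{at all} by the other matching (any such covering edge would have to lie in $\M_1 \cap \M_2$ or back inside the component, both impossible). Hence an endpoint in $L$ whose edge lies in $\M_2 \setminus \M_1$ cannot belong to $L^*$ (as $\M_1$ covers $L^*$), and symmetrically for $R^*$, which is precisely what makes the choice above admissible. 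Finally, every $v \in L^*$ is covered by $\M_1$, hence incident to an $\M_1$-edge that is either kept in the first step or lies in some component of $\M_1 \triangle \M_2$, where $v$ is covered by construction; the same holds for $R^*$, so $\M$ covers $L^* \cup R^*$. (This merging argument is the classical Mendelsohn--Dulmage lemma; one could instead cite it, but the case analysis is short enough to include in full.)
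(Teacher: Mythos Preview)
Your proof is correct. Both you and the paper obtain the two one-sided matchings $\M_1$ and $\M_2$ in the same way via Hall's theorem on the induced subgraphs, and both exploit the same key fact: an endpoint of an $(\M_1,\M_2)$-alternating path that is incident only to an $\M_i$-edge cannot lie in the set that the other matching was built to cover. The merging steps differ, however. The paper proceeds \emph{iteratively}: it starts from $\M_1$, picks an uncovered $x\in R^*$, follows the $(\M_2,\M_1)$-alternating path from $x$ until it terminates (necessarily outside $L^*\cup R^*$), swaps edges along that path, and repeats. You instead take the \emph{global} Mendelsohn--Dulmage route: decompose $\M_1\triangle\M_2$ into cycles and paths once and for all, and on each component select the sub-matching that avoids only non-required endpoints. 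Your approach handles all components simultaneously and sidesteps the bookkeeping of how the evolving matching interacts with $\M_2$ across iterations; the paper's approach is more procedural but makes the ``augment one vertex at a time'' picture explicit. Either argument is standard and short; yours is arguably the cleaner write-up.
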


\begin{proof}
    As in the original Hall's theorem, it is not hard to see that if there exists a partial matching covering $L^*$ and $R^*$, then this condition is satisfied. It thus remains to show the other implication. Denote by $H_1$ the subgraph of $H$ resulting from restricting the vertex set to $L^*\sqcup R$ and denote by $H_2$ the subgraph of $H$ resulting from restricting the vertex set to $L\sqcup R^*$. By \Cref{thm:hall-marriage} there exists a matching $\M_1$ on $H_1$ covering $L^*$ and a matching $\M_2$ on $H_2$ covering $R^*$. We now view both $\M_1$ and $\M_2$ as partial matchings on $H$ and we want to combine them into a partial matching that covers both $L^*$ and $R^*$.

    We start with $\M_1$. This covers all vertices of $L^*$, but it may not cover all vertices of $R^*$. If we are given such a vertex $x\in R^*$, it must be covered in $\M_2$, so we can start drawing a path through $H$, starting at $x$ and alternating between edges from $\M_1$ and $\M_2$. By finiteness, this path must end at some point, and it can only end outside of $L^*$ and $R^*$. The reason for this is that when we follow the path up to a vertex $y \in L^*$, then we must have arrived there by an edge from $\M_2$. However, this vertex $y$ is covered in the matching $\M_1$, so the path cannot end there. An analogous argument shows why it cannot end in $R^*$. We can thus replace all the edges from $\M_1$ along this path with the edges from $\M_2$ along the path and get a matching which still covers $L^*$ and additionally covers the vertex $y$. Repeating this argument eventually yields a partial matching that covers both $L^*$ and $R^*$.
\end{proof}

The following result indicates how we are going to apply Hall's theorem.

\begin{lemma}\label{lem:apply-hall}
    If $X,Y \in \TEPCh$ are $\eps$-interleaved, then for any subset $A \subseteq \tBarc_n^\eps(X)$ or $A \subseteq \tBarc_n^\eps(Y)$, we have $|\mu_\eps(A)| \ge |A|$.
\end{lemma}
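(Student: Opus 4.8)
By the symmetry of "$\eps$-interleaved" it suffices to treat a sub-multiset $A\subseteq\tBarc_n^\eps(X)$; fix an $\eps$-interleaving $(\phi,\psi)$ between $X$ and $Y$. The plan is to push the problem into tame parametrized vector spaces via the functors $H_n$ and $\partial_n$, and then to run the counting argument for persistence modules there. By \Cref{thm:structure-theorem}, $X$ has a direct summand $X_A\cong\bigoplus_{[0,s,t)\in A}\I^n[0,s,t)$; let $\iota\colon X_A\hookrightarrow X$ and $\rho\colon X\twoheadrightarrow X_A$ be the associated section and retraction (which are parametrized chain maps), and for a parametrized chain complex $Z$ write $\mathbf{s}_Z\colon Z\to Z_{2\eps}$ for the parametrized chain map with $(\mathbf{s}_Z)^r=Z^{r\le r+2\eps}$. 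Since $\mathbf{s}_X=\psi_\eps\circ\phi$ and $X_A$ is a summand, we get $\mathbf{s}_{X_A}=\rho_{2\eps}\circ\mathbf{s}_X\circ\iota$, so the $2\eps$-self-map of $X_A$ factors as $X_A\xrightarrow{\phi\circ\iota}Y_\eps\xrightarrow{\rho_{2\eps}\circ\psi_\eps}(X_A)_{2\eps}$.

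The decisive point is that $H_n$ and $\partial_n$ annihilate every interval functor $\I^m[0,s',t')$ with $m\neq n$ and commute with $\eps$-shifts (\Cref{prop:par-hom-im} $(\ref{item:par-hom-im-4})$ and the functoriality of $H_n,\partial_n$). Hence applying $H_n$, resp. $\partial_n$, to the factorization above produces a factorization of the $2\eps$-self-map of the tame parametrized vector space $H_n(X_A)$, resp. $\partial_n(X_A)$, through $H_n(Y)_\eps$, resp. $\partial_n(Y)_\eps$ — the summands of $Y$ of degree $\neq n$ simply disappear. By \Cref{prop:par-hom-im} $(\ref{item:par-hom-im-4})$ and the structure theorem in $\TPVect$,
\[
H_n(X_A)\cong\bigoplus_{\substack{[0,s,t)\in A\\ s<t}}\Fi[s,t),\qquad \partial_n(X_A)\cong\bigoplus_{\substack{[0,s,t)\in A\\ s>0}}\Fi[0,s),
\]
and likewise for $Y$ with $\tBarc_n(Y)$ in place of $A$. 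A factorization $M\xrightarrow{\alpha}N_\eps\xrightarrow{\beta}M_{2\eps}$ with $\beta\circ\alpha=\mathbf{s}_M$ gives, by naturality of $\alpha$, the identity $M^{p\le Q}=\beta^{Q-2\eps}\circ N^{p+\eps\le Q-\eps}\circ\alpha^p$ for all $0\le p$ and $Q\ge p+2\eps$, hence $\operatorname{rk}(M^{p\le Q})\le\operatorname{rk}(N^{p+\eps\le Q-\eps})$. Reading ranks as interval counts, this yields, for all $p\ge0$ and $Q\ge p+2\eps$,
\begin{align*}
\#\{[0,s,t)\in A\mid s\le p,\ t>Q\}&\le\#\{[0,s',t')\in\tBarc_n(Y)\mid s'\le p+\eps,\ t'>Q-\eps\},\\
\#\{[0,s,t)\in A\mid s>Q\}&\le\#\{[0,s',t')\in\tBarc_n(Y)\mid s'>Q-\eps\}.
\end{align*}
The assumption $W([0,s,t))=t/2>\eps$ for every interval of $A$ is exactly what makes the slack $Q\ge p+2\eps$ available, so that these estimates are informative for the intervals of $A$.

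It remains to extract $|\mu_\eps(A)|\ge|A|$ from these two families of inequalities, as in the one-parameter case \cite{Bjerkevik_StabilityDPS_21}. Identify a tagged interval $[0,s,t)$ with the point $(s,t)\in[0,\infty]^2$, so that $c$ becomes the $\ell^\infty$-distance and $\mu_\eps(A)$ is the sub-multiset of $\tBarc_n(Y)$ lying within $\ell^\infty$-distance $\eps$ of $A$. The first family of inequalities is a box-count comparison for rectangles strictly above the diagonal $\{s=t\}$, coming from the homological part; the second is a comparison of the collapse-time coordinate $s$, coming from the boundary part, covering rectangles that abut the diagonal. Choosing rectangles tailored to the configuration of $A$ and combining the counts gives $|\mu_\eps(A)|\ge|A|$. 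Intervals of $A$ with $t=\infty$ (and among them those with $s=\infty$) are covered by the same estimates in the limit $Q\to\infty$, consistently with \Cref{cor:intervals-tepch-interleaved}, which guarantees that $X$ and $Y$ have equally many tagged intervals of the forms $[0,s,\infty)$ with $s<\infty$ and $[0,\infty,\infty)$ in degree $n$.

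The delicate point is this last step. A tagged interval $[0,s,t)$ with $0<s<t$ contributes both to $H_n$ (the bar $[s,t)$) and to $\partial_n$ (the bar $[0,s)$), while $[0,0,t)$ and $[0,s,s)$ contribute to only one of the two; assembling the homological and boundary box-counts without over- or under-counting the two-dimensional point configuration, and handling the half-open-interval endpoint conventions so that the $2\eps$ of slack furnished by $W([0,s,t))>\eps$ is genuinely at one's disposal at each required rectangle, is where the real work lies.
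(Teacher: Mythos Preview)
Your reduction to $\TPVect$ via $H_n$ and $\partial_n$ genuinely loses information, and the final step you flag as ``delicate'' cannot be completed from the two families of rank inequalities you derive. Here is a concrete obstruction. Take $\eps=1$, $A=\{[0,3,3)\}$ (so $W([0,3,3))=\tfrac32>\eps$, hence $A\subseteq\tBarc_n^\eps$), and $\tBarc_n(Y)=\{[0,2.5,10)\}$. Then $c([0,3,3),[0,2.5,10))=7$, so $\mu_\eps(A)=\emptyset$ and Hall fails. Yet your inequalities hold: since $s=t=3$ we have $H_n(X_A)=0$, so the homological family is vacuous; and for the boundary family, $\#\{s>Q\}$ on the left is $1$ exactly for $2\le Q<3$, while on the right $\#\{s'>Q-1\}=1$ for all $Q<3.5$. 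One can also check directly that the $\TPVect$-level factorizations of the $2\eps$-self-maps of $H_n(X_A)$ and $\partial_n(X_A)$ through $H_n(Y)_\eps$ and $\partial_n(Y)_\eps$ exist in this example, even though the chain-level factorization of $\mathbf{s}_{X_A}$ through $Y_\eps$ does \emph{not} (any parametrized chain map $\I^n[0,3,3)\to\I^n[0,2.5,10)_1$ is zero by \Cref{lem:non-zero-parmaps}). So projecting to $H_n$ and $\partial_n$ discards precisely the coupling between the two coordinates $s$ and $t$ of a tagged interval, and for intervals on or near the diagonal $\{s=t\}$ this coupling is the whole content.

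The paper avoids this by staying at the chain level. It writes each component $\psi_{J,I'}\circ\phi_{I,J}$ as a scalar via the one-dimensionality of $\Hom$ between interval functors (\Cref{lem:non-zero-morphism}), uses \Cref{lem:alpha-and-interleaving} to kill all summands with $J\notin\mu_\eps(A)$ once the $I$'s are ordered by $\alpha(I)=s+t$, and then reads off $|\mu_\eps(A)|\ge|A|$ from a matrix-rank identity: a product of an $r\times q$ and a $q\times r$ matrix equals an $r\times r$ upper-triangular matrix with $1$'s on the diagonal. The point is that \Cref{lem:alpha-and-interleaving} exploits the existence of nonzero morphisms in \emph{both} directions simultaneously, which is exactly the information your $H_n/\partial_n$ projections throw away.
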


Before we prove \Cref{lem:apply-hall}, we explain how it helps us finish the proof of the isometry theorem. 

\begin{proof}[Proof of the inequality $\ge$ in \Cref{thm:iso-thm-tepch}]
    Let $\eps>0$ and assume that $X$ and $Y$ are $\eps$-interleaved. We need to show that for any $n\in \N$, there exists an $\eps$-matching between $\tBarc_n(X)$ and $\tBarc_n(Y)$. We construct a finite bipartite graph $H$ as follows. Let the vertex set of $H$ be given by $\tBarc_n(X) \sqcup \tBarc_n(Y)$. Connect $I \in \tBarc_n(X)$ and $J\in \tBarc_n(Y)$ with an edge if and only if $c(I,J)\le \eps$. Thus in the graph $H$, it holds that for any subset $A\subseteq \tBarc_n(X)$ or $A\subseteq \tBarc_n(Y)$ we have $\mu(A)=\mu_\eps(A)$. Therefore, by \Cref{lem:apply-hall}, the conditions for \Cref{cor:Hall-marriage} are satisfied and there exists a partial matching in $H$ covering both $\tBarc_n^\eps(X)$ and $\tBarc_n^\eps(Y)$. Such a matching corresponds exactly to an $\eps$-matching of the tagged barcodes $\tBarc_n(X)$ and $\tBarc_n(Y)$, thus we are done.
\end{proof}

It thus remains only to prove \Cref{lem:apply-hall}. For this we define $\alpha([0,s,t)) := s+t$ and we first show the following result.

\begin{lemma}\label{lem:alpha-and-interleaving}
    Let $I_1=[0,s_1,t_1), I_2=[0,s_2,t_2), I_3=[0,s_3,t_3) \in \J$ with $\alpha(I_1) \le \alpha(I_3)$ and let 
    \[
    f\colon \I^n[0,s_1,t_1) \to \I^n[0,s_2,t_2)_\eps
    \quad \text{ and } \quad
    g\colon \I^n[0,s_2,t_2) \to \I^n[0,s_3,t_3)_\eps
    \]
    be parametrized chain maps which are both non-zero. Then $c(I_1,I_2) \le \eps$ or $c(I_2,I_3) \le \eps$.
\end{lemma}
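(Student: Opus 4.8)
### Proof Plan for Lemma \ref{lem:alpha-and-interleaving}

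The plan is to exploit the explicit classification of non-zero parametrized chain maps between interval functors provided by Lemma \ref{lem:non-zero-morphism}, together with the shift formula in Remark \ref{rem:shift-of-interval}. The existence of a non-zero map $f\colon \I^n[0,s_1,t_1) \to \I^n[0,s_2,t_2)_\eps = \I^n[0,\max\{s_2-\eps,0\},\max\{t_2-\eps,0\})$ forces, by Lemma \ref{lem:non-zero-morphism} (the $m=n$ case, since domain and codomain are both in degree $n$), the inequalities $s_1 \ge \max\{s_2-\eps,0\}$ and $t_1 \ge \max\{t_2-\eps,0\}$; similarly from $g$ we get $s_2 \ge \max\{s_3-\eps,0\}$ and $t_2 \ge \max\{t_3-\eps,0\}$. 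So the first step is to write down these four inequalities cleanly. Note these give half of each ``$c \le \eps$'' condition: they bound $s_2 - s_1 \le \eps$ and $t_2 - t_1 \le \eps$ (and likewise $s_3-s_2\le\eps$, $t_3-t_2\le\eps$), but not the reverse directions $s_1 - s_2 \le \eps$, etc. So the content of the lemma is to use the hypothesis $\alpha(I_1) = s_1+t_1 \le s_3+t_3 = \alpha(I_3)$ to recover the missing bounds for at least one of the two pairs.

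The key step is a case analysis. Suppose, for contradiction, that both $c(I_1,I_2) > \eps$ and $c(I_2,I_3) > \eps$. Combined with the four inequalities above (which already ensure $s_2-s_1\le\eps$, $t_2-t_1\le\eps$, $s_3-s_2\le\eps$, $t_3-t_2\le\eps$), the failure $c(I_1,I_2)>\eps$ means $\max\{|s_1-s_2|,|t_1-t_2|\} > \eps$, hence $s_1 - s_2 > \eps$ or $t_1 - t_2 > \eps$; similarly $c(I_2,I_3)>\eps$ gives $s_2 - s_3 > \eps$ or $t_2 - t_3 > \eps$. This yields four subcases. In the subcase ``$s_1 - s_2 > \eps$ and $t_2 - t_3 > \eps$'': then $s_1 > s_2 + \eps \ge s_3$ (using $s_2 \ge s_3 - \eps$, which is one of our inequalities — wait, we need $s_2 \ge \max\{s_3-\eps,0\}$, giving $s_2 \ge s_3-\eps$, so $s_1 > s_2+\eps \ge s_3$; actually we get strictly $s_1 > s_3$), and $t_2 > t_3 + \eps \ge t_3$, together with $t_1 \ge t_2 - \eps > t_3$. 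Hmm, let me be careful: from $f$ nonzero we only directly get $t_1 \ge \max\{t_2-\eps,0\}$, i.e. $t_1 \ge t_2-\eps$; combined with $t_2 > t_3+\eps$ this gives $t_1 > t_3$. And $s_1 > s_3$ from above. So $\alpha(I_1) = s_1+t_1 > s_3+t_3 = \alpha(I_3)$, contradicting the hypothesis. The symmetric subcase ``$t_1-t_2>\eps$ and $s_2-s_3>\eps$'' gives the same contradiction with the roles of $s$ and $t$ swapped.

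The remaining two subcases — ``$s_1-s_2>\eps$ and $s_2-s_3>\eps$'' and ``$t_1-t_2>\eps$ and $t_2-t_3>\eps$'' — are the main obstacle, since they don't immediately pit $\alpha(I_1)$ against $\alpha(I_3)$ through mismatched coordinates. In the first of these, adding gives $s_1 - s_3 > 2\eps$, so $s_1 > s_3$; I then need to also control the $t$-coordinates, and here I use the ``forward'' inequalities from $f$ and $g$ in the other direction: $t_1 \ge t_2 - \eps$ and $t_2 \ge t_3 - \eps$ give $t_1 \ge t_3 - 2\eps$, which is not enough on its own. The resolution should be that these subcases are actually impossible or still produce a contradiction — I expect one must revisit what ``non-zero chain map'' forces more carefully, possibly invoking that when $s_1 - s_2 > \eps$ strictly, the structure of the map $f$ (which is built from $\one_{D^n}$, $\Psi^n$, $\one_{S^n}$ blocks as in the proof of Lemma \ref{lem:non-zero-morphism}) constrains things further, or else handle the $\infty$ conventions separately. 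A cleaner route, which I would try first, is to observe that the conclusion we want is symmetric and to argue: if $c(I_1,I_2)>\eps$ then necessarily (given the four base inequalities) $s_1 > s_2$ or $t_1 > t_2$ with a gap; track which coordinate of $I_1$ ``sticks out'' past $I_2$ and which coordinate of $I_2$ sticks out past $I_3$, and show that in every combination where they are the same coordinate, the $\alpha$-hypothesis is violated because the other coordinate is pinned by the forward inequality, while when they are different coordinates the argument of the previous paragraph applies. I would organize the writeup around exactly these coordinate bookkeeping cases, being careful with the $\max\{\cdot,0\}$ truncations when $s_i - \eps < 0$ and with the $t=\infty$ / $s=\infty$ conventions, where $|\infty-\infty|=0$ makes several of the bad inequalities automatically fail.
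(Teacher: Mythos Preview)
Your setup is right: from Lemma~\ref{lem:non-zero-morphism} and Remark~\ref{rem:shift-of-interval}, the existence of a nonzero $f$ forces $s_2 \le s_1+\eps$ and $t_2 \le t_1+\eps$, and similarly $g$ gives $s_3 \le s_2+\eps$ and $t_3 \le t_2+\eps$. Your contradiction hypothesis then correctly yields the disjunctions $s_1-s_2>\eps$ or $t_1-t_2>\eps$, and $s_2-s_3>\eps$ or $t_2-t_3>\eps$.

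Where you go astray is in the four-way case split. You try to compare $I_1$ with $I_3$ coordinate by coordinate, and in the ``same-coordinate'' subcases (e.g.\ $s_1-s_2>\eps$ and $s_2-s_3>\eps$) you get stuck because you only obtain $t_1 \ge t_3 - 2\eps$, which indeed does not give $t_1 > t_3$. But you don't need $t_1 > t_3$: you only need $s_1+t_1 > s_3+t_3$. The clean observation --- and this is exactly what the paper does --- is to go through $\alpha(I_2)$ rather than jumping straight from $I_1$ to $I_3$. Concretely, show the \emph{one-step} implication: if a nonzero map $\I^n I_j \to (\I^n I_{j+1})_\eps$ exists and $c(I_j,I_{j+1})>\eps$, then $\alpha(I_j) > \alpha(I_{j+1})$. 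Indeed, the nonzero map gives $s_{j+1}\le s_j+\eps$ and $t_{j+1}\le t_j+\eps$; if $s_j - s_{j+1}>\eps$ then
\[
\alpha(I_j)=s_j+t_j > (s_{j+1}+\eps)+(t_{j+1}-\eps)=\alpha(I_{j+1}),
\]
and symmetrically if $t_j-t_{j+1}>\eps$. Applying this for $j=1$ and $j=2$ gives $\alpha(I_1)>\alpha(I_2)>\alpha(I_3)$, contradicting the hypothesis. So your ``problematic'' subcase dissolves: e.g.\ with $s_1-s_2>\eps$ and $s_2-s_3>\eps$ you get $\alpha(I_1)>\alpha(I_2)$ from the first and $\alpha(I_2)>\alpha(I_3)$ from the second, done.

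Your speculation that one must look more closely at the internal block structure of $f$, or treat the $\max\{\cdot,0\}$ truncations and $\infty$ conventions separately, is unnecessary: the inequalities $s_{j+1}\le s_j+\eps$, $t_{j+1}\le t_j+\eps$ hold as stated in all cases (with $|\infty-\infty|=0$), and that is all that is used.
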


\begin{proof}
    By \Cref{lem:non-zero-morphism}, the existence of $f$ implies that $s_2 \le s_1+\eps$ and $t_2 \le t_1+\eps$. Assume that $\I^n[0,s_1,t_1)$ and $\I^n[0,s_2,t_2)$ are not $\eps$-interleaved. Then, by \Cref{lem:intervals-tepch-eps-interleaved}, $s_2 < s_1 - \eps$ or $t_2 < t_1 - \eps$. In the first case we have $\alpha(I_1) = s_1+t_1 > s_2+\eps+t_1 \ge s_2+t_2 = \alpha(I_2)$. Similarly, in the second case, we have $\alpha(I_1)=s_1+t_1 > s_1+t_2+\eps \ge s_2+t_2 = \alpha(I_2)$. Assuming that $\I^n[0,s_2,t_2)$ and $\I^n[0,s_3,t_3)$ are not $\eps$-interleaved either, with a similar computation we get $\alpha(I_2)>\alpha(I_3)$. Hence $\alpha(I_1)>\alpha(I_2)>\alpha(I_3)$, which contradicts the hypothesis.
\end{proof}

We introduce more notation.

\begin{definition}
    If $X \in \TEPCh$, $I=[0,s,t) \in \tBarc_n(X)$ and $\eps>0$, then we denote by $\eta_I^{\eps} \colon \I^n[0,s,t) \to \I^n[0,s,t)_\eps$ the canonical parametrized chain map given by the internal maps of $\I^n[0,s,t)$.  

    Let $X,Y \in \TEPCh$. For any $I=[0,s,t) \in \tBarc_n(X)$, we write $\iota^X_I\colon \I^n[0,s,t) \hookrightarrow X$ for the inclusion and $\pi^X_I\colon X \twoheadrightarrow \I^n[0,s,t)$ for the projection. Analogously, for $J \in \tBarc_n(Y)$, we write $\iota^Y_J$ and $\pi^Y_J$ for the inclusion and projection in $Y$. 
    Now let $(\phi,\psi)$ be an $\eps$-interleaving between $X$ and $Y$. For any $I \in \tBarc_n(X)$ and $J \in \tBarc_{n'}(Y)$, we write $\phi_{I,J} := (\pi^Y_J)_\eps \circ \phi \circ \iota^X_I$ and $\psi_{J,I} := (\pi^X_I)_\eps \circ \psi \circ \iota^Y_J$. 
\end{definition}

As a final preparation for the proof of \Cref{lem:apply-hall}, recall that by \Cref{lem:non-zero-parmaps} any morphism $\varphi\colon \I^n[0,s,t) \to \I^n[0,s',t')$ is uniquely determined by the scalar $\varphi(1_n^0) \in \I^n[0,s',t')^0_n = \Fi$. We denote this scalar by $\lambda(\varphi)$. This scalar has the following properties:
\begin{itemize}
    \item $\lambda(\varphi' + \varphi)= \lambda(\varphi') + \lambda(\varphi)$ for $\varphi'\colon \I^n[0,s,t) \to \I^n[0,s',t')$,
    \item $\lambda(\varphi'\circ\varphi)=\lambda(\varphi')\cdot\lambda(\varphi)$ for $\varphi'\colon \I^n[0,s',t') \to \I^n[0,s'',t'')$,
    \item $\lambda(\varphi_\eps)=\lambda(\varphi)$ for $0< \eps < t'$,
    \item $\lambda(\eta_I^{2\eps})=1$ whenever $I \in \tBarc_n^\eps(X)$ for some $X \in \TEPCh$.
\end{itemize}

\begin{proof}[Proof of \Cref{lem:apply-hall}]
    Let $(\phi,\psi)$ be an $\eps$-interleaving between $X$ and $Y$. We prove the statement for a subset $A\subseteq \tBarc_n^\eps(X)$, the case where  $A\subseteq \tBarc_n^\eps(Y)$ is analogous. Not that for any $I,I' \in A$ with $\alpha(I)\le \alpha(I')$ we have
    \begin{equation}\label{eq:same-or-different-int}
        (\pi^X_{I'})_{2\eps} \circ \psi_\eps \circ \phi \circ \iota^X_I 
        = \sum_{m\in \N} \sum_{J \in \tBarc_m(Y)} (\psi_{J,I'})_\eps \circ \phi_{I,J}
        = \sum_{J \in \tBarc_n(Y)} (\psi_{J,I'})_\eps \circ \phi_{I,J}
        = \sum_{J \in \mu_\eps(A)} (\psi_{J,I'})_\eps \circ \phi_{I,J},
    \end{equation}
    where the second equality follows from \Cref{lem:non-zero-morphism} and the third one holds since, for $J \notin \mu_\eps(A)$, at least one of the two morphisms $\psi_{J,I'}$ and $\phi_{I,J}$ must be zero by \Cref{lem:alpha-and-interleaving}. The left-hand side of \Cref{eq:same-or-different-int} is equal to $\eta_I^{2\eps}$ if $I=I'$ and equal to zero if $I\neq I'$. In particular, $\mu_\eps(A)\neq \emptyset$ when $A\neq \emptyset$.

    We order the elements of $A= \{I_1=[0,s_1,t_1), \ldots, I_r=[0,s_r,t_r) \}$ so that $\alpha(I_i) \le \alpha(I_{i'})$ for $i\le i'$, and we 
    set $\mu_\eps(A) = \{J_1=[0,s_1',t_1'), \ldots, J_q=[0,s_q',t_q') \}$. The goal is to show that $q\ge r$.

    Computing $\lambda$ on both sides of  \Cref{eq:same-or-different-int} applied to $I=I'=I_i \in A$, and using the properties of $\lambda$, we get
    \[
    1 = \lambda(\eta_{I_i}^{2\eps}) 
    =  \sum_{J \in \mu_\eps(A)} \lambda(\psi_{J,I_i}) \cdot \lambda(\phi_{I_i,J})
    = \sum_{k=1}^q \lambda(\psi_{J_k,I_i}) \cdot \lambda(\phi_{I_i,J_k}).
    \]
    If, on the other hand, we apply \Cref{eq:same-or-different-int} to $I=I_i$ and $I'=I_{i'}$, for $i<i'$, we get 
    \[
    0 = \sum_{J \in \mu_\eps(A)} \lambda(\psi_{J,I_{i'}}) \cdot \lambda(\phi_{I_i,J})
    = \sum_{k=1}^q \lambda(\psi_{J_k,I_{i'}}) \cdot \lambda(\phi_{I_i,J_k}).
    \]
    We can put these equations into the form of a matrix equation, which yields
    \begin{equation*}
        \begin{pmatrix}
            \lambda(\psi_{J_1,I_1}) & \cdots &\lambda(\psi_{J_q,I_1}) \\
            \vdots &\ddots & \vdots \\
            \lambda(\psi_{J_1,I_r}) & \cdots & \lambda(\psi_{J_q,I_r})
        \end{pmatrix}
        \begin{pmatrix}
            \lambda(\phi_{I_1,J_1}) & \cdots &\lambda(\phi_{I_r,J_1}) \\
            \vdots &\ddots & \vdots \\
            \lambda(\phi_{I_1,J_q}) & \cdots & \lambda(\phi_{I_r,J_q})
        \end{pmatrix}
        =
        \begin{pmatrix}
            1 & * &\cdots &* \\
            0 & 1 &\cdots & * \\
            \vdots & \vdots & \ddots &\vdots \\
            0 & 0 & \cdots &1
        \end{pmatrix}.
    \end{equation*}
    Note that for $i>i'$ we make no statement, which is why there are unknown entries on the upper right triangle of the matrix on the right-hand side. This matrix has rank $r$ and the product of the two matrices on the left-hand side has rank at most $q$, therefore $q \ge r$.
\end{proof}

\subsection{Parametrizing chain complexes}\label{sec:parametrize-ch}

We are now going to describe two methods of transforming a chain complex into a factored chain complex. This requires additional input. In the first place, we need to use based chain complexes. A \textbf{based chain complex} is a chain complex $C_\bullet$ together with a basis $\B_k$ for every $C_k$.
The other thing we need is to equip pairs of basis elements of adjacent degrees with weights. Formally, this is dealt with in the following definition.

\begin{definition}\label{def:weighted-based-chain-complex}
     A \textbf{weighted based chain complex} is a based chain complex $C_\bullet$, with bases $\B_k$, additionally equipped with functions $\operatorname{w}\colon \B_k \times \B_{k-1} \to [0,\infty)$ for all $k$. The values $\operatorname{w}(a,b)$, for $a \in \B_k$ and $b \in \B_{k-1}$, are called \textbf{weights}. A weighted based chain complex $(C_\bullet,\B_\bullet,\operatorname{w})$ is called \textbf{ordered}, if for every $t \in [0,\infty)$, a total order is given on the set
     \[
     \operatorname{w}^{-1}(t) := \bigcup_k \left\{ (a,b) \in \B_k \times \B_{k-1} \mid \operatorname{w}(a,b)=t \right\}.
     \]
     A weighted based chain complex is called \textbf{generic} if all the weights are non-zero and pairwise different.
     We denote by $\wbCh$ (resp. $\wbChgen$) the class of (ordered) weighted based chain complexes.
\end{definition}

A generic weighted based chain complex is in particular ordered, since the sets $\operatorname{w}^{-1}(t)$ have cardinality zero or one for all values of $t$.

The main idea for constructing a factored chain complex from an ordered weighted based chain complex is the following: Repeatedly simplify the chain complex with respect to the pair of minimal weight (breaking the tie by using the ordering, if needed) until we are left with a chain complex whose differential is zero in all degrees. The difference between the two constructions we present is how we build a factored chain complex from this sequence of simplifications. In the first construction, represented by the letter $X$, the resulting functor $X\colon [0,\infty) \to \Ch$ corresponds closely to this sequence of simplifications, in the sense that the chain complexes $X^t$ are given by the result of applying some of the simplifications, and the internal chain maps are given by compositions of the quotient maps. The parameter $t\in [0,\infty)$ can be thought of as time, in which case we can interpret the construction as applying one simplification after the other, always waiting for a time equal to the weight of a pair before simplifying that pair. 

We present the first construction only for the case of strictly positive weights. Pairs of weight zero could be allowed, resulting in multiple simplifications happening at the same time. However, we leave it to the reader to fill in the details for this, as it adds some technicalities and hinders the readability of the exposition.

\begin{construction}\label{def:assign-parchain-to-vectorfield}
    Given an ordered weighted based chain complex $C_\bullet = (C_\bullet,\B_\bullet,\operatorname{w})$ with strictly positive weights, we define the factored chain complex $X=X(C_\bullet)$ as follows.
    \begin{enumerate}[(1)]
        \item\label{item:assign-1} Let $t_0 := 0$ and $X^{t_0} := C_\bullet$. 

        \item\label{item:assign-2} Assume that we have already defined a sequence of numbers $0=t_0 < t_1 < \ldots < t_r < \infty$ and a functor $X\colon [0,t_r] \to \Ch$. Further assume that $X^{t_r}$ is an ordered weighted based chain complex, i.e.~we have a basis $\B^r_k$ for each $X^{t_r}_k$ and totally ordered weights $\operatorname{w}(a,b)$ between basis elements of adjacent degrees. 

        If all the differentials in the chain complex $X^{t_r}$ are zero, go to step $(\ref{item:assign-3})$.

        Otherwise, consider the set 
        \[
        A := \bigcup_{i\ge 1} \{ (a',b') \in \B^r_i \times \B^r_{i-1} \mid \langle \partial a',b' \rangle \neq 0 \} 
        \]
        and let $(a,b) \in A$ be the unique pair of minimum weight, or, if the minimal weight is not unique, the pair of minimal weight which is minimal with respect to the order. Define 
        \[
        t_{r+1} := t_r + \operatorname{w}(a,b).
        \]
        Define $X^{t_{r+1}}$ to be the chain complex resulting from \Cref{lem:chain-complex-simplification}, when applied to the chain complex $X^{t_r}$ and the element $a \in \B^{r}_k$. 
        We endow the vector spaces $X^{t_{r+1}}_i$ with the bases $\B^{r+1}_i$ coming from \Cref{lem:chain-complex-simplification}, when applied to the chain complex $X^{t_r}$, the bases $\B^r_i$, and the elements $a \in \B^{r}_k$, $b \in \B^r_{k-1}$. Consider basis elements $a' \in \B^r_i$ and $b' \in \B^r_{i-1}$, both different from $a$ and $b$. We define $\operatorname{w}([a'],[b']) := \operatorname{w}(a',b')$, thus the weights on the bases $\B^r_\bullet$ induce weights on the bases $\B^{r+1}_\bullet$. The same holds for the orderings of pairs with the same weight.
        
        For $t_r < t < t_{r+1}$, define $X^t := X^{t_r}$ and for $t_r \le s \le t < t_{r+1}$, let $X^{s\le t} := \one$ and define $X^{s\le t_{r+1}}$ to be the epimorphism from \Cref{lem:chain-complex-simplification}. The remaining internal chain maps are defined by composition, thus we have extended $X$ to a functor $X\colon [0,t_{r+1}] \to \Ch$. 
        Update $r$ to $r+1$ and repeat step $(\ref{item:assign-2})$.
        
        \item\label{item:assign-3} If all the differentials in the chain complex $X^{t_r}$ are zero, then we define $X^t := X^{t_r}$ for all $t_r < t < \infty$ and $X^{t \le s} = \one$ for all $t_r \le t \le s < \infty$. The remaining internal maps are defined by composition, so we have extended $X$ to a functor $X\colon [0,\infty) \to \Ch$.
    \end{enumerate}
\end{construction}

\begin{proposition}\label{prop:barcode-construction1}
    If $C_\bullet = (C_\bullet,\B_\bullet,\operatorname{w})$ is an ordered weighted based chain complex with strictly positive weights, then \Cref{def:assign-parchain-to-vectorfield} assigns to it a factored chain complex $X(C_\bullet)$. Moreover, if $(a_1,b_1) \in \B_{n_1}\times \B_{n_1-1},\ldots,(a_r,b_r) \in \B_{n_r}\times \B_{n_r-1}$ are the pairs that get simplified in the construction of $X(C_\bullet)$,
    then 
    \[
    X(C_\bullet) \cong \bigoplus_{i=1}^r \I^{n_i}[0,t_i,t_i) \oplus \bigoplus_{k=0}^\infty (\I^k[0,0,\infty))^{\beta_k},
    \]
    where $\beta_k=\dim(H_k(C_\bullet))$. The numbers $t_i$ are given by
    \[
    t_i = \operatorname{w}(a_1,b_1) + \cdots + \operatorname{w}(a_i,b_i).
    \]
\end{proposition}

\begin{proof}
    First note that, since $C_\bullet$ is ordered, the set $A$ contains a unique minimal pair in the first iteration. Since then we only get rid of one pair in each iteration and do not change any of the other weights, this holds also for later iterations of step (\ref{item:assign-2}).

    We need to check that the conditions from the beginning of step (\ref{item:assign-2}) are satisfied when we go to another iteration of step (\ref{item:assign-2}). They are satisfied for the first iteration of step (\ref{item:assign-2}), when $r=0$, due to the definitions in step (\ref{item:assign-1}). If $r>0$, the conditions are satisfied at the beginning of step (\ref{item:assign-2}) and the differentials in $X^{t_r}$ are not all zero, then it follows from \Cref{lem:chain-complex-simplification} that the conditions are also satisfied for the next iteration.

    We also need to show that the definition stops at some point, i.e.~that we do not get an infinite loop of repeating step (\ref{item:assign-2}). This holds because with each iteration we reduce the total dimension of the chain complex $X^{t_r}$ and since $X^{t_0}=C_\bullet$ has finite total dimension, we can do this only a finite number of times.

    Once we get to step (\ref{item:assign-3}), we get a parametrized chain complex $X=X(C_\bullet) \in \PCh$. It remains to check that $X$ is tame and epimorphic. Indeed $X$ is epimorphic since the only internal maps that are not isomorphisms are quotient maps of the form given by \Cref{lem:chain-complex-simplification}, or compositions of such maps. Also, by construction, $X$ is right-constant everywhere and fails to be left-constant exactly at the points $t_1,\ldots,t_r$, where $r$ is the number of times we have repeated step (\ref{item:assign-2}). There are only finitely many of these points, since we have shown before that we repeat step (\ref{item:assign-2}) only a finite number of times. It follows that $X$ is tame and hence a factored chain complex. 
    
    It remains to prove the claim about the decomposition of $X$. For any $i=1,\ldots,r$, picking the $i$-th pair $(a_i, b_i)$ yields a short exact sequence
    \[
    0 \longrightarrow D^{n_i} \overset{\iota}{\longrightarrow} X^{t_{i-1}} \overset{q}{\longrightarrow} X^{t_i} \longrightarrow 0,
    \]
    where $\iota$ is the unique chain map that sends $D^{n_i}_{n_i} \ni 1_{n_i} \mapsto a_i \in C_{n_i}$ and $q=q_{a_i}$ is the quotient map from \Cref{lem:chain-complex-simplification}. This sequence splits by \Cref{lem:chain-complex-ses-splits}, where the splitting depends on $b_i$. Thus we have $X^{t_{i-1}} = X^{t_i} \oplus D^{n_i} = X^{t_i} \oplus \ker X^{t_{i-1} \le t_i}$, and the result follows by induction.
\end{proof}

Now we present our second construction for assigning a factored chain complex to an ordered weighted based chain complex, represented by the letter $Y$. It is formulated without further conditions and has the advantage that when a pair of small weight gets simplified, this always yields a small tagged interval in the tagged barcode, even if this pair gets simplified at a later stage of the construction. In particular, pairs of weight zero, if they exist, do not contribute to the tagged barcode.

\begin{construction}\label{def:assign-parchain-to-vectorfield2}
    Given an ordered weighted based chain complex $C_\bullet = (C_\bullet,\B_\bullet,\operatorname{w})$, we define the factored chain complex $Y=Y(C_\bullet)$ as follows.
    \begin{enumerate}[(1)]
        \item Let $Z_\bullet := C_\bullet \in \wbCh$.
        
        \item Initialize  $Y$ by setting it to be equal to the zero factored chain complex discretized by $t_0=0$ and $t_1=+\infty$ with only zero vector spaces and null linear maps.

        \item Find the pair $(a,b) \in \B_k \times \B_{k-1}$ in $Z_\bullet$, among all $k$, that has the smallest weight $\operatorname{w}(a,b)$ (in case of multiple pairs sharing the smallest weight, pick the minimal one with respect to the order) and  satisfies the  condition: $\langle \partial a,b \rangle \neq 0$. If we cannot find such a pair, then all boundaries are zero in $Z_\bullet$ and in that case, we go to Step (\ref{step-final}). \label{step-iterate}
        
        \item Update $Y$ by summing to it  the interval functor $\I^n[0,t,t)$ in $\TEPCh$, where $t=\operatorname{w}(a,b)$ and $(a,b) \in \B_n \times \B_{n-1}$. If $\operatorname{w}(a,b)=0$, leave $Y$ as it is.

        \item Update $Z_\bullet$ by quotienting it by the $n$-disk of $\Ch$ generated by $a$ and $\partial a$ as in \Cref{lem:chain-complex-simplification}. Update the bases according to \Cref{lem:chain-complex-simplification}, applied to the elements $a \in \B_n$ and $b \in \B_{n-1}$. Given two basis elements $[a']$ and $[b']$ in the updated bases, in adjacent degrees, we define $\operatorname{w}([a'],[b']) := \operatorname{w}(a',b')$, thereby inducing weights on the updated bases. The new orderings of pairs with the same weight are defined analogously.

        \item Repeat from Step (\ref{step-iterate}).

        \item Add as many interval functors $\I^n[0,0,\infty)$ to $Y$ as needed to reach the dimension of $Z_n$ for each $n\ge 0$. \label{step-final}
    \end{enumerate}
\end{construction}

It immediately follows from the fact that $Y(C_\bullet)$ is constructed as the direct sum of factored chain complexes that \Cref{def:assign-parchain-to-vectorfield2} defines an assignment $Y\colon \wbChgen \to \TEPCh$ with the following property.

\begin{proposition}\label{prop:barcode-construction2}
    If $C_\bullet = (C_\bullet,\B_\bullet,\operatorname{w})$ is an ordered weighted based chain complex and $(a_1,b_1) \in \B_{n_1}\times \B_{n_1-1},\ldots,(a_r,b_r) \in \B_{n_r}\times \B_{n_r-1}$ are the pairs that get simplified in the construction of $Y=Y(C_\bullet)$, then 
    \[
    Y(C_\bullet) \cong \bigoplus_{i=1}^r \I^{n_i}[0,s_i,s_i) \oplus \bigoplus_k (\I^k[0,0,\infty))^{\beta_k},
    \]
    where $\beta_k=\dim(H_k(M;\Fi))$ and $s_i=\operatorname{w}(a_i,b_i)$.
\end{proposition}

Comparing \Cref{def:assign-parchain-to-vectorfield} and \Cref{def:assign-parchain-to-vectorfield2}, we see that for any $C_\bullet \in \wbChgen$, the same pairs get simplified in the construction of $X(C_\bullet)$ and $Y(C_\bullet)$, but at different times. Note that the sequence of simplification times is increasing by construction for $X$ but not necessarily for $Y$. The barcode resulting from the $Y$ construction always has bars of length given by the weights of the pairs, while the lenghts of the bars in $X$ accumulate. Thus, if we simplify a pair of small weight, this will always result in a small change of the barcode of $Y$, which is not necessarily true for $X$ (compare also the different upper bounds given in \Cref{lem:stability-X-Y}).

Based on \Cref{lem:chain-complex-simplification}, the explicit computation of the tagged barcode from \Cref{def:assign-parchain-to-vectorfield2} consists of Gaussian elimination and deletion of rows and columns. We include the pseudocode for completeness, see \Cref{alg:tagged-barcode-computation}. The pseudocode for computing the tagged barcode from \Cref{def:assign-parchain-to-vectorfield} would look very similar, with the only difference being that we would initialize the variable $t$ in the beginning by setting it to zero and then update it in every step of the while loop by adding $\operatorname{w}(b_j,b_i)$ to it in line~\ref{algo:tagged-barcode-t}. 

\begin{algorithm}
\caption{Tagged barcode computation (in $\Z/2\Z$)}\label{alg:tagged-barcode-computation}
\begin{algorithmic}[1]
\Require $(C_\bullet, \B_\bullet, \operatorname{w}_\bullet) \in \wbChgen$
\Ensure Tagged barcode of $Y(C_\bullet, \B_\bullet, \operatorname{w}_\bullet)$ and sequence of simplified pairs
\State $\tBarc := \emptyset$ and $\operatorname{Pairs} := \emptyset$
\State $d :=$ total boundary matrix of $C_\bullet$ w.r.t. bases $\B_\bullet$
\While{a non-zero column $j$ exists in $d$}
\State $i,j :=$ pair with $d[i,j]\neq 0$ and $t:= \operatorname{w}(b_j,b_i)$ minimal \label{algo:tagged-barcode-t}
\State Append $(n,[0,t,t))$ to $\tBarc$ and $(b_j,b_i)$ to $\operatorname{Pairs}$, with $n= \deg(b_j)$
\For{$k$ with $d[k,j] \neq 0$}
\State Add the $i$-th row to the $k$-th row
\EndFor
\State Delete rows $i,j$ and columns $i,j$ from $d$
\EndWhile
\For{all indices $j$ of remaining columns in $d$}
\State Append $(n,[0,0,+\infty))$ to $\tBarc$ with $n=\deg(b_j)$
\EndFor
\State Return $\tBarc$ and $\operatorname{Pairs}$
\end{algorithmic}
\end{algorithm}

\subsection{Stability}\label{sec:stability}

We want to give a stability result that bounds the interleaving distance between the factored chain complexes assigned by \Cref{def:assign-parchain-to-vectorfield} and \Cref{def:assign-parchain-to-vectorfield2} to sufficiently similar weighted based chain complexes. To do this, we start by introducing reparametrizations, since in the case of \Cref{def:assign-parchain-to-vectorfield}, the two factored chain complexes will be related by a reparametrization.

\begin{definition}
    A map $\alpha\colon [0,\infty) \to [0,\infty)$ is called a \textbf{reparametrization} if it is a homeomorphism (which implies that $\alpha(0)=0$ and that $\alpha$ is order-preserving) and $\eps_\alpha := \sup_{t \in [0,\infty)}|\alpha(t)-t| < \infty$. If $X\colon [0,\infty) \to \Ch$ is a parametrized chain complex, then we call $X \circ \alpha\colon [0,\infty) \to \Ch$ the \textbf{reparametrization of $X$ by $\alpha$}. This is again a parametrized chain complex, with $(X\circ \alpha)^t=X^{\alpha(t)}$ and $(X\circ \alpha)^{s\le t} = X^{\alpha(s) \le \alpha(t)}$.
\end{definition}

\begin{lemma}
    The inverse $\alpha^{-1}$ of a reparametrization is again a reparametrization and $\eps_{\alpha^{-1}}=\eps_\alpha$.
\end{lemma}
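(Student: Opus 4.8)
The plan is to check the two defining properties of a reparametrization for $\alpha^{-1}$, namely that it is a homeomorphism of $[0,\infty)$ and that $\eps_{\alpha^{-1}} := \sup_{s}|\alpha^{-1}(s)-s| < \infty$, and to extract the equality $\eps_{\alpha^{-1}} = \eps_\alpha$ as a byproduct. The first property is immediate: the inverse of a homeomorphism is a homeomorphism, so $\alpha^{-1}\colon [0,\infty)\to[0,\infty)$ is a homeomorphism, and in particular $\alpha^{-1}(0)=0$ and $\alpha^{-1}$ is order-preserving.

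For the second property, I would perform a change of variable in the supremum defining $\eps_{\alpha^{-1}}$. Given $s\in[0,\infty)$, set $t:=\alpha^{-1}(s)$, so that $s=\alpha(t)$; then $|\alpha^{-1}(s)-s| = |t-\alpha(t)| = |\alpha(t)-t|$. Since $\alpha$ is a bijection of $[0,\infty)$, the assignment $s\mapsto t=\alpha^{-1}(s)$ is a bijection of the index set of the supremum onto itself, whence
\[
\eps_{\alpha^{-1}} = \sup_{s\in[0,\infty)} |\alpha^{-1}(s)-s| = \sup_{t\in[0,\infty)} |\alpha(t)-t| = \eps_\alpha.
\]
In particular $\eps_{\alpha^{-1}} = \eps_\alpha < \infty$, so $\alpha^{-1}$ satisfies all the requirements and is a reparametrization.

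There is no real obstacle in this argument; it is a one-line change of variables. The only point deserving a moment of care is that the substitution $s\mapsto\alpha^{-1}(s)$ reindexes the supremum over exactly the same set $[0,\infty)$, which is precisely the bijectivity already guaranteed by $\alpha$ being a homeomorphism; no further topological input is needed for the equality of the two suprema.
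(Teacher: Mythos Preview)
Your proof is correct and follows essentially the same approach as the paper: both note that the inverse of a homeomorphism is a homeomorphism, and both obtain $\eps_{\alpha^{-1}}=\eps_\alpha$ via the substitution $s=\alpha(t)$, reducing $|\alpha^{-1}(s)-s|$ to $|\alpha(t)-t|$. Your version is slightly more explicit about the bijectivity justifying the reindexing of the supremum, but the argument is the same.
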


\begin{proof}
    The inverse of a homeomorphism is again a homeomorphism, so the only thing to show is that $\eps_{\alpha^{-1}}=\eps_\alpha$. This follows because for any $t \in [0,\infty)$, we have $|\alpha^{-1}(\alpha(t))- \alpha(t)| = |t-\alpha(t)| = |\alpha(t)-t|$. 
\end{proof}

\begin{proposition}\label{prop:reparam-interleaved}
    Given $\alpha$ and $X$ as in the definition above, we have that $X$ and $X \circ \alpha$ are $\eps_\alpha$-interleaved.
\end{proposition}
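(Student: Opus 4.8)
The plan is to exhibit an explicit $\eps_\alpha$-interleaving between $X$ and $X\circ\alpha$ using the internal maps of $X$. The key observation is that, since $\alpha$ is an order-preserving homeomorphism with $\sup_t|\alpha(t)-t|\le\eps_\alpha$, for every $t\ge 0$ we have both $\alpha(t)\le t+\eps_\alpha$ and $t\le\alpha(t)+\eps_\alpha$, i.e.~$\alpha(t)\le t+\eps_\alpha$ and $\alpha^{-1}(t)\le t+\eps_\alpha$ (the latter using the previous lemma, $\eps_{\alpha^{-1}}=\eps_\alpha$). This lets us define candidate parametrized chain maps out of the internal maps of $X$.

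Concretely, I would set $\eps:=\eps_\alpha$ and define $\phi\colon X\to (X\circ\alpha)_\eps$ by $\phi^t := X^{\alpha(t+\eps)\,\ge\, ?}$… more precisely, note $(X\circ\alpha)_\eps^t = X^{\alpha(t+\eps)}$, and since $\alpha(t+\eps)\ge t$ (because $\alpha(t+\eps)\ge (t+\eps)-\eps = t$, using $|\alpha(s)-s|\le\eps$), the internal map $X^{t\le\alpha(t+\eps)}$ is defined; set $\phi^t := X^{t\le\alpha(t+\eps)}$. Similarly $(X_\eps)^t = X^{t+\eps}$ while $(X\circ\alpha)^t = X^{\alpha(t)}$, and since $\alpha(t)\le t+\eps$, the map $X^{\alpha(t)\le t+\eps}$ is defined; set $\psi^t := X^{\alpha(t)\le t+\eps}\colon (X\circ\alpha)^t\to (X_\eps)^t$. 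One then checks naturality of $\phi$ and $\psi$: for $s\le t$ this reduces to the identity $X^{s\le\alpha(s+\eps)}$ followed by $X^{\alpha(s+\eps)\le\alpha(t+\eps)}$ equalling $X^{s\le t}$ followed by $X^{t\le\alpha(t+\eps)}$, which holds by functoriality of $X$ since all four compositions equal $X^{s\le\alpha(t+\eps)}$ (all indices are comparable because $s\le t\le\alpha(t+\eps)$ and $s\le\alpha(s+\eps)\le\alpha(t+\eps)$). The same argument applies to $\psi$.

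Finally I would verify the interleaving equations. We have $(\psi_\eps\circ\phi)^t = \psi^{t+\eps}\circ\phi^t = X^{\alpha(t+\eps)\le t+2\eps}\circ X^{t\le\alpha(t+\eps)} = X^{t\le t+2\eps}$ by functoriality (the indices satisfy $t\le\alpha(t+\eps)\le t+2\eps$, the right inequality because $\alpha(t+\eps)\le (t+\eps)+\eps$). Symmetrically, $(\phi_\eps\circ\psi)^t = \phi^{t+\eps}\circ\psi^t = X^{t+\eps\le\alpha(t+2\eps)}\circ X^{\alpha(t)\le t+\eps}$; this composes to $X^{\alpha(t)\le\alpha(t+2\eps)} = (X\circ\alpha)^{t\le t+2\eps}$ again by functoriality, once we check $\alpha(t)\le t+\eps\le\alpha(t+2\eps)$, where the right inequality holds because $\alpha(t+2\eps)\ge (t+2\eps)-\eps = t+\eps$. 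Hence $(\phi,\psi)$ is an $\eps_\alpha$-interleaving, so $X$ and $X\circ\alpha$ are $\eps_\alpha$-interleaved.

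I do not anticipate a serious obstacle here; the only thing requiring mild care is bookkeeping the inequalities relating $t$, $\alpha(t)$, $t\pm\eps_\alpha$ and making sure the needed internal maps of $X$ are defined and that all the relevant triples of parameters are comparable so that functoriality of $X$ applies. This is essentially the standard proof that reparametrization costs at most $\eps_\alpha$ in interleaving distance, adapted to the $[0,\infty)$-indexed, $\Ch$-valued setting; nothing about $\Ch$ versus $\Vect$ plays any role.
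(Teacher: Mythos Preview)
Your proposal is correct and follows essentially the same approach as the paper: both define $\phi^t := X^{t\le\alpha(t+\eps)}$ and $\psi^t := X^{\alpha(t)\le t+\eps}$ using the same inequalities $\alpha(t)\le t+\eps$ and $t\le\alpha(t+\eps)$, and your writeup in fact spells out the naturality and interleaving verifications that the paper leaves as ``one can check.''
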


\begin{proof}
    Let $\eps := \eps_\alpha$ and $Y:=X\circ \alpha$. The internal maps of $Y$ are given by $Y^{s\le t} = X^{\alpha(s)\le \alpha(t)}$. We construct an $\eps$-interleaving between $X$ and $Y$ as follows.

    Since $\eps=\eps_\alpha=\eps_{\alpha^{-1}}$, we have $\alpha(t) \le t+\eps$ and $t\le \alpha(t+\eps)$ for all $t \in [0,\infty)$, where the second inequality results from applying $\alpha$, which is order preserving, to the inequality $\alpha^{-1}(t)\le t+\eps$. Defining a morphism $\psi\colon Y \to X_\eps$ means defining, for every $t\ge 0$, a map from $Y^t$ to $X^{t+\eps}$ commuting with the internal maps. Since $Y^t=X^{\alpha(t)}$ and $\alpha(t)\le t+\eps$, we can take $\psi^t$ to be the internal map of $X$, i.e.~$\psi^t:= X^{\alpha(t)\le t+\eps}$. Similarly we get $\phi\colon X \to Y_\eps$ by $\phi^t := X^{t\le \alpha(t+\eps)}$. One can check that $(\phi,\psi)$ is an interleaving pair for $X,Y$.
\end{proof}

We are now ready to prove our stability result for generic weighted based complexes. The following lemma gives an upper bound on the interleaving distance between the parametrizations of two weighted based chain complexes which are isomorphic and whose weights are similar enough. In the next section it will be used in the proof of \Cref{thm:local-stability} and \Cref{thm:comb-approx}.
 
\begin{lemma}\label{lem:stability-X-Y}
    Assume that we are given generic weighted based chain complexes $(C_\bullet,\B_\bullet,\operatorname{w}),(C'_\bullet,\B'_\bullet,\operatorname{w}')$ and bijections $\varphi\colon \B_k \to\B'_k$, inducing an isomorphism of chain complexes $C_\bullet \cong C'_\bullet$. 
    Assume further that the bijections $\varphi$ respect the ordering of the weights in $\B_\bullet$ and $\B'_\bullet$, i.e.~for all $(a,b) \in \B_k \times \B_{k-1}$ and $(c,d) \in \B_j \times \B_{j-1}$ we have $\operatorname{w}(a,b)< \operatorname{w}(c,d)$ if and only if $\operatorname{w}'(\varphi(a),\varphi(b))< \operatorname{w}'(\varphi(c),\varphi(d))$. If in the construction of $X(C_\bullet)$ and $Y(C_\bullet)$ the pairs 
    \[
    (a_1,b_1), \quad \ldots, \quad (a_n,b_n),
    \]
    get simplified in this order, then in the construction of $X(C'_\bullet)$ and $Y(C'_\bullet)$ the pairs 
    \[
    (\varphi(a_1),\varphi(b_1)), \quad \ldots, \quad (\varphi(a_n),\varphi(b_n)),
    \]
    get simplified in this order. Setting $d_\varphi(C_\bullet,C'_\bullet):= \max \{ |\operatorname{w}(a,b)-\operatorname{w}'(\varphi(a),\varphi(b))| \mid a \in \B_k,\ b \in \B_{k-1} \}$, the following inequalities hold:
    \begin{align*}
        d_I(X(C_\bullet),X(C'_\bullet)) &\le n d_\varphi(C_\bullet,C'_\bullet), \\
        d_I(Y(C_\bullet),Y(C'_\bullet)) &\le d_\varphi(C_\bullet,C'_\bullet).
    \end{align*}
\end{lemma}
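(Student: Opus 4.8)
The plan is to run \Cref{def:assign-parchain-to-vectorfield} and \Cref{def:assign-parchain-to-vectorfield2} ``in parallel'' on the two given weighted based chain complexes, and then to read off the two distance bounds separately: the bound for $Y$ from the isometry theorem applied to explicitly described tagged barcodes, the bound for $X$ from a suitable reparametrization. As a first reduction I would use the bijections $\varphi$ to identify $\B'_\bullet$ with $\B_\bullet$, so that we may assume $C'_\bullet=C_\bullet$ and $\B'_\bullet=\B_\bullet$ as based chain complexes — in particular they have the same boundary matrices $M_\bullet$ — the only difference being that $\operatorname{w}$ is replaced by a generic weight function $\operatorname{w}'$ with $\operatorname{w}(a,b)<\operatorname{w}(c,d)\iff\operatorname{w}'(a,b)<\operatorname{w}'(c,d)$ for all pairs of basis elements in adjacent degrees, and $d_\varphi(C_\bullet,C'_\bullet)=\max_{a,b}|\operatorname{w}(a,b)-\operatorname{w}'(a,b)|$.

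Next I would establish the parallel execution by induction on the number of simplifications performed so far. Before a step, the current boundary matrix is, by the inductive hypothesis, the same in both runs; the step selects the pair $(a,b)$ with $\langle\partial a,b\rangle\ne 0$ of minimal weight, and since $\operatorname{w}$ and $\operatorname{w}'$ induce the same total order, this pair is the same in both runs; and by \Cref{lem:chain-complex-simplification}~$(\ref{item:chain-complex-simplification-1})$--$(\ref{item:chain-complex-simplification-4})$ the boundary matrix, the induced bases, and the induced weights (hence their order) after simplifying $(a,b)$ are determined by these data alone, so they again coincide in the two runs; genericity is preserved since the new weights form a subset of the old ones. Thus both constructions simplify a common list $(a_1,b_1),\dots,(a_n,b_n)$, with $(a_i,b_i)\in\B_{n_i}\times\B_{n_i-1}$, in the same order, passing through the same intermediate chain complexes $D_0=C_\bullet,D_1,\dots,D_n$ and the same quotient maps $q_i\colon D_i\to D_{i+1}$.

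For $Y$: by construction $Y(C_\bullet,\operatorname{w})$ is the direct sum of $\I^{n_i}[0,\operatorname{w}(a_i,b_i),\operatorname{w}(a_i,b_i))$ for $i=1,\dots,n$ together with $\dim(D_n)_k$ copies of $\I^k[0,0,\infty)$ in each degree $k$, and $Y(C_\bullet,\operatorname{w}')$ is the analogous sum with $\operatorname{w}$ replaced by $\operatorname{w}'$ (the numbers $\dim(D_n)_k$ being equal). Matching $\I^{n_i}[0,\operatorname{w}(a_i,b_i),\operatorname{w}(a_i,b_i))$ with $\I^{n_i}[0,\operatorname{w}'(a_i,b_i),\operatorname{w}'(a_i,b_i))$ for each $i$ and matching the essential summands to each other gives, in each degree $n$, a matching of cost at most $d_\varphi(C_\bullet,C'_\bullet)$ (the cost of each matched pair being $|\operatorname{w}(a_i,b_i)-\operatorname{w}'(a_i,b_i)|$, resp.~$0$) with no unmatched summand, so $d_B(\tBarc_n(Y(C_\bullet)),\tBarc_n(Y(C'_\bullet)))\le d_\varphi(C_\bullet,C'_\bullet)$ for all $n$, and the bound for $Y$ follows from \Cref{thm:iso-thm-tepch}. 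For $X$: the $i$-th simplification now occurs at time $t_i:=\sum_{j=1}^{i}\operatorname{w}(a_j,b_j)$ in $X(C_\bullet,\operatorname{w})$ and at $t'_i:=\sum_{j=1}^{i}\operatorname{w}'(a_j,b_j)$ in $X(C_\bullet,\operatorname{w}')$ (with $t_0=t'_0=0$), and between consecutive such times, as well as after the last, each construction is constant with the same intermediate complexes $D_i$ and maps $q_i$. I would take $\alpha\colon[0,\infty)\to[0,\infty)$ to be the increasing piecewise-linear homeomorphism with $\alpha(t'_i)=t_i$ for $i=0,\dots,n$ and slope $1$ on $[t'_n,\infty)$; then $X(C_\bullet,\operatorname{w}')=X(C_\bullet,\operatorname{w})\circ\alpha$ under the identification above. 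Since $\alpha-\mathrm{id}$ is affine on each $[t'_{i-1},t'_i]$ and equals the constant $t_n-t'_n$ on $[t'_n,\infty)$,
\[
\eps_\alpha=\max_{0\le i\le n}|t_i-t'_i|\le\max_{0\le i\le n}\sum_{j=1}^{i}|\operatorname{w}(a_j,b_j)-\operatorname{w}'(a_j,b_j)|\le n\,d_\varphi(C_\bullet,C'_\bullet),
\]
so \Cref{prop:reparam-interleaved} gives $d_I(X(C_\bullet),X(C'_\bullet))\le\eps_\alpha\le n\,d_\varphi(C_\bullet,C'_\bullet)$.

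The step I expect to be the main obstacle is the parallel execution: one must verify carefully, via the explicit matrix transformations in \Cref{lem:chain-complex-simplification}, that after each simplification all the data governing the next choice — boundary matrix, induced bases, induced weights and their order — are genuinely identical in the two runs, so that the induction closes. Once this is secured, the two distance estimates are essentially bookkeeping: a term-by-term matching of tagged barcodes for $Y$, and a piecewise-linear reparametrization for $X$.
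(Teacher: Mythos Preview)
Your proposal is correct and follows essentially the same approach as the paper: an inductive argument that the same pairs are simplified in the same order, then the isometry theorem plus a direct matching of tagged barcodes for $Y$, and a piecewise-linear reparametrization together with \Cref{prop:reparam-interleaved} for $X$. The only cosmetic difference is that your reparametrization $\alpha$ goes in the opposite direction (with $\alpha(t'_i)=t_i$ rather than $\alpha(t_i)=t'_i$), which is immaterial since $\eps_\alpha=\eps_{\alpha^{-1}}$.
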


\begin{proof}
    Since the orders of the weights in $C_\bullet$ and $C'_\bullet$ correspond via $\varphi$, the same is true in particular for the pair of basis elements with the smallest weight. This also does not change after any of the simplifications, since we update the weights by simply deleting some basis elements and not changing the other weights. It follows that whenever we simplify a pair $(a,b)$ in the construction of $X(C_\bullet)$, we simplify the corresponding pair $(\varphi(a),\varphi(b))$ in the construction of $X(C'_\bullet)$. Since we simplify the same pairs in the construction of $X$ and $Y$, the same is true also for $Y$.

    To prove the inequalities we use two different strategies. We begin with the second inequality, for which we apply \Cref{thm:iso-thm-tepch}, so that we can use the bottleneck distance instead of the interleaving distance. From \Cref{def:assign-parchain-to-vectorfield2} it is straightforward to see that the $k$-th tagged barcode of $Y(C_\bullet)$ consists of one copy of $[0,t,t)$ for any pair $(a,b) \in \B_k \times \B_{k-1}$ that gets simplified, where $t=\operatorname{w}(a,b)$. We can match each such tagged interval with the corresponding tagged interval $[0,t',t')$ in the $k$-th tagged barcode of $Y(C'_\bullet)$, where $t'=\operatorname{w}'(\varphi(a),\varphi(b))$. This yields a matching whose cost is upper bounded by $d_\varphi(C_\bullet,C'_\bullet)$.

    To show the first inequality we will use \Cref{prop:reparam-interleaved}, so we want to construct a reparametrization $\alpha\colon [0,\infty) \to [0,\infty)$ such that $X(C'_\bullet) \circ \alpha \cong X(C_\bullet)$ and $\eps_\alpha \le n d_\varphi(C_\bullet,C'_\bullet)$. 
    
    For $i=1,\ldots,n$, let $\operatorname{w}_i := \operatorname{w}(a_i,b_i)$ and $\operatorname{w}_i' := \operatorname{w}'(\varphi(a_i),\varphi(b_i))$. 
    Next we define 
    \[
    t_i := \operatorname{w}_1 + \cdots +\operatorname{w}_i \quad \text{and} \quad t_i' := \operatorname{w}_1' + \cdots +\operatorname{w}_i'.
    \]
    We construct the reparametrization $\alpha\colon [0,\infty) \to [0,\infty)$ as follows. We define $\alpha(0):=0$ and $\alpha(t_i):=t_i'$ for all $i$. In between $t_i$ and $t_{i+1}$ we interpolate linearly. Above $t_n$ we define $\alpha(t_r+s):=t_r'+s$ for all $s\ge 0$. One can check that this indeed defines a reparametrization and that the supremum of $|\alpha(t)-t|$ is attained at one of the $t_i$ and is bounded by $n d_\varphi(C_\bullet,C'_\bullet)$, since for all $i=1,\ldots,n$ we have
    \[
    |t_i-t_i'| \le |\operatorname{w}_1-\operatorname{w}_1'| +\cdots+ |\operatorname{w}_i-\operatorname{w}_i'| \le i d_\varphi(C_\bullet,C'_\bullet) \le n d_\varphi(C_\bullet,C'_\bullet).
    \]
    It remains to check that $X(C'_\bullet) \cong X(C_\bullet) \circ \alpha$. 
    By assumption the bijections $\varphi\colon \B_k \to \B'_k$ induce an isomorphism $X(C_\bullet)^0 \cong X(C'_\bullet)^0$. The isomorphism type of $X(C_\bullet)^t$ changes when we do the simplifications, which is exactly at the time $t_i$. For $X(C'_\bullet)$ it is at the time $t_i'$ instead. Since we do the same simplification moves in the same order, the complexes $X(C_\bullet)^t$ and $X(C'_\bullet)^{\alpha(t)}$ remain isomorphic for all $t\in [0,\infty)$.
\end{proof}

\section{The tagged barcode of a gradient-like Morse-Smale vector field}\label{sec:barcode-of-vector-field}

Let $M$ be a closed Riemannian manifold of dimension $n$ and denote by $d_M$ the distance on $M$ induced by the Riemannian structure.
The goal of this section is to assign a factored chain complex to a gradient-like Morse-Smale vector field in general position (defined below) defined on $M$. For this, we apply either of our two constructions from \Cref{sec:parametrize-ch} to the Morse complex $\MC_\bullet(v)$. This yields a sequence of algebraic simplifications, depending on the distances between the singular points of $v$, resulting in a factored chain complex. 

\subsection{General position}\label{sec:gen-pos} 

In our construction of a factored chain complex from a weighted based chain complex, we need that the weights are ordered. The simplest way to satisfy this condition is to demand that they are pairwise different, i.e. the weighted based chain complex is generic. We thus formulate an analogous condition for the distances between the singular points of the vector fields that we study, i.e. that no two of these distances are the same. We show that this condition is not too restricting, in the sense that the set of vector fields satisfying it forms an open and dense subset of all gradient-like Morse-Smale vector fields. 

\begin{definition}
    Given a finite set of points $S \subseteq M$, we define
    \[
    \xi(S) := \min \{|d_M(a,b)-d_M(a',b')| \mid a,b,a',b' \in S \text{ and } a\neq b,\ a'\neq b',\ \{a,b\}\neq \{a',b'\}\}.
    \]
    If $\xi(S)>0$, then we say that the points in $S$ are \textbf{in general position}. In plain words, this means that no two pairs of points of $S$ have the same distance. A gradient-like Morse-Smale vector field is said to be \textbf{in general position} if its set of singular points is in general position. We denote by $\X_{gMS+}(M)$ the set of gradient-like Morse-Smale vector fields in general position.
\end{definition}

\begin{proposition}\label{prop:general-position-open-dense}
    The set $\X_{gMS+}(M)$ is open and dense in $\X^1_{gMS}(M)$.
\end{proposition}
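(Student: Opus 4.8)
The plan is to prove openness and denseness separately, reducing both to statements about the finite set $\Sing(v)$ of singular points and the structural stability of gradient-like Morse-Smale vector fields.

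For \textbf{openness}: given $v \in \X_{gMS+}(M)$, its singular set is a finite set $S = \{p_1,\dots,p_m\}$ with $\xi(S) > 0$. By \Cref{thm:str-stab}, $v$ is structurally stable, so there is a $C^1$-neighbourhood $\Nn$ of $v$ such that every $w \in \Nn$ is topologically equivalent to $v$ via a homeomorphism $h_w$ close to the identity; in particular each $w \in \Nn$ lies in $\X_{gMS}(M)$ and $h_w$ restricts to a bijection $\Sing(v) \to \Sing(w)$ respecting indices. Since $h_w \to \id$ (in the $\sup$-distance) as $w \to v$, the singular points of $w$ move continuously with $w$, and hence all the pairwise distances $d_M(a,b)$ for $a,b \in \Sing(w)$ vary continuously. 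The function $\xi$ is a minimum of finitely many continuous functions of these distances, so $w \mapsto \xi(\Sing(w))$ is continuous on $\Nn$ in an appropriate sense; since $\xi(\Sing(v)) > 0$, there is a smaller neighbourhood on which $\xi(\Sing(w)) > 0$, i.e.\ $w \in \X_{gMS+}(M)$. The one point needing care here is that the cardinality of $\Sing(w)$ is constant (equal to $|S|$) on $\Nn$, which is exactly what structural stability via a topological equivalence gives us, so the combinatorial index set of the distance-differences does not jump.

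For \textbf{denseness}: let $v \in \X_{gMS}(M)$ be arbitrary and fix a neighbourhood $\U$ of $v$ in $\X^1_{gMS}(M)$; we must find $w \in \U \cap \X_{gMS+}(M)$. The idea is to perturb $v$ slightly so as to move its (finitely many) singular points into general position while keeping it gradient-like Morse-Smale. Concretely, one takes a perturbation supported in small disjoint coordinate balls around the singular points, each chosen small enough to stay inside $\U$ and small enough (using hyperbolicity and the implicit function theorem) that the perturbed field still has exactly one hyperbolic singular point of the same index in each ball and no new singular points, and that transversality of stable/unstable manifolds is preserved (transversality is an open condition, and structural stability again guarantees the perturbed field is still in $\X_{gMS}(M)$ once it is $C^1$-close enough). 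Having this freedom to move each singular point independently within its ball, the set of configurations $(q_1,\dots,q_m)$ of singular point locations for which some two pairs share a distance is a finite union of proper real-analytic subvarieties of $(\text{ball})^m$, hence has empty interior; so a generic arbitrarily small choice of displacements lands in the complement, giving $\xi(\Sing(w)) > 0$.

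The \textbf{main obstacle} is the denseness half, and specifically the need to know that an arbitrarily small $C^1$-perturbation can be arranged to (i) keep the field in $\X_{gMS}(M)$, (ii) preserve the number, indices, and hyperbolicity of singular points, and (iii) move the singular points independently enough to break all distance ties. Points (i) and (ii) are standard consequences of structural stability (\Cref{thm:str-stab}) together with the fact that hyperbolic singular points persist and can be tracked via the implicit function theorem under $C^1$-small perturbations; the genericity argument in (iii) is a routine "finite union of measure-zero sets" argument once the independent mobility of the singular points is set up. I would present (i)--(ii) by citing structural stability and the persistence of hyperbolic fixed points, and then spell out (iii) as the only genuinely new computation, which is short.
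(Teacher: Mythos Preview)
Your proposal is correct and follows the same overall strategy as the paper: openness via structural stability (\Cref{thm:str-stab}) and continuous dependence of the pairwise singular-point distances; denseness by making $C^1$-small perturbations supported in disjoint coordinate balls around the singular points so as to move them into general position while staying gradient-like Morse-Smale.

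The differences are in the execution of the denseness half. Where you assert the ``freedom to move each singular point independently within its ball'' and gesture at the implicit function theorem, the paper proves this explicitly (its \Cref{lem:gen-pos-dense-2}): it precomposes the local expression $\vec v_i$ with a compactly supported diffeomorphism $\varphi$ satisfying $\varphi(y)=0$, so the unique zero of $\vec v_i\circ\varphi$ in the ball is exactly the prescribed target $y$, and it controls the $C^1$-norm of $v - v\circ\varphi$ by hand. This gives the surjectivity onto nearby configurations that your argument needs but does not quite supply (persistence of a hyperbolic zero under perturbation is not the same as being able to place it at a prescribed location). For the genericity of point configurations, you invoke a ``finite union of proper real-analytic subvarieties'' argument, whereas the paper (its \Cref{lem:gen-pos-dense-1}) simply slides one point at a time along a minimizing geodesic to break each distance coincidence. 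Your route is more conceptual, theirs more constructive; both are valid. One small correction: the Riemannian distance $d_M$ is generally only smooth away from the cut locus, not real-analytic, so ``real-analytic subvariety'' should be weakened to ``closed nowhere-dense set'' (e.g.\ smooth hypersurface locally) --- the conclusion is unaffected.
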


The part about openness is fairly straightforward, but for denseness we will first prove two lemmas. The first lemma states that if we are given some points on $M$, we can put them into general position by moving them by an arbitrarily small amount.

\begin{lemma}\label{lem:gen-pos-dense-1}
    Given distinct points $p_1,\ldots,p_n \in M$ and $\delta>0$, there exist distinct points $q_1,\ldots,q_n \in M$ such that $d_M(p_i,q_i)<\delta$ for all $i$ and the points $q_1,\ldots,q_n$ are in general position.
\end{lemma}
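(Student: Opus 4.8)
The plan is to perturb the points one at a time and reduce each perturbation step to a Baire-category observation. I would argue by induction on the number of points $n$. The cases $n\le 2$ are trivial: a set with at most two points contains no two \emph{distinct} unordered pairs, so in the definition of $\xi$ the minimum runs over the empty set and equals $+\infty>0$; hence one may take $q_i=p_i$. For the inductive step, apply the statement to $p_1,\dots,p_{n-1}$ (with the same $\delta$) to obtain distinct points $q_1,\dots,q_{n-1}\in M$ in general position with $d_M(p_i,q_i)<\delta$ for $i<n$. It then suffices to choose $q_n$ in the open $\delta$-ball $B$ around $p_n$ so that (a) $q_n\notin\{q_1,\dots,q_{n-1}\}$; (b) $d_M(q_n,q_i)\ne d_M(q_n,q_j)$ for all $i\ne j$ in $\{1,\dots,n-1\}$; and (c) $d_M(q_n,q_i)\ne d_M(q_k,q_l)$ for all $i\in\{1,\dots,n-1\}$ and all $k\ne l$ in $\{1,\dots,n-1\}$. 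Indeed, together with the inductive hypothesis (which takes care of all pairs not involving $q_n$), conditions (a)--(c) say precisely that all $\binom n2$ pairwise distances among $q_1,\dots,q_n$ are distinct, i.e.\ that $\{q_1,\dots,q_n\}$ is in general position.

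The set of choices of $q_n$ forbidden by (a)--(c) is a finite union of: the singletons $\{q_i\}$; the bisectors $B_{ij}:=\{q\in M\mid d_M(q,q_i)=d_M(q,q_j)\}$ with $i\ne j$; and the metric spheres $\Sigma_{i,c}:=\{q\in M\mid d_M(q,q_i)=c\}$ with $c=d_M(q_k,q_l)>0$. Each of these sets is closed, so it is enough to show that each one has empty interior: then so does their finite union, its complement is dense, and any point of $B$ lying in that complement can serve as $q_n$. A singleton in a positive-dimensional manifold clearly has empty interior. For $\Sigma_{i,c}$ with $c>0$: if an open set $U$ were contained in $\Sigma_{i,c}$, pick $q\in U$ and a minimizing geodesic $\gamma\colon[0,c]\to M$ from $q_i$ to $q$ (which exists by Hopf--Rinow, $M$ being compact); then $\gamma(c-t)\in U$ for small $t>0$, while $d_M(q_i,\gamma(c-t))=c-t\ne c$, a contradiction.

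The one point that needs a little care — and the only real obstacle — is that each bisector $B_{ij}$ has empty interior. Suppose a nonempty open set $U$ lies in $B_{ij}$ and pick $q\in U$; note $q\ne q_i$, since $q_i\in U\subseteq B_{ij}$ would force $q_i=q_j$. Let $c:=d_M(q_i,q)=d_M(q_j,q)>0$ and let $\gamma\colon[0,c]\to M$ be a minimizing geodesic from $q_i$ to $q$. For small $t>0$ we have $\gamma(c-t)\in U$, hence $d_M(q_j,\gamma(c-t))=c-t$; since also $d_M(\gamma(c-t),q)=t$ and $d_M(q_j,q)=c$, the concatenation of a minimizing geodesic from $q_j$ to $\gamma(c-t)$ with $\gamma|_{[c-t,c]}$ is a length-minimizing path from $q_j$ to $q$, hence — after unit-speed reparametrization — a geodesic defined on $[0,c]$ that agrees with $\gamma$ on $[c-t,c]$. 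By uniqueness of geodesics with prescribed position and velocity, this geodesic coincides with $\gamma$ on all of $[0,c]$, so that $q_j=\gamma(0)=q_i$, a contradiction. This completes the inductive step and hence the proof.
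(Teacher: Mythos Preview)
Your proof is correct and takes a genuinely different route from the paper's. The paper argues directly: it calls each coincidence $d_M(p_i,p_j)=d_M(p_r,p_s)$ a ``problem'', picks a point appearing on only one side, and slides it a small amount along a minimizing geodesic toward its partner; this strictly decreases exactly one of the two equal distances, and by continuity a sufficiently small slide creates no new coincidences, so finitely many such moves clear all problems. Your argument is instead inductive in $n$ and Baire-style: having put $q_1,\dots,q_{n-1}$ in general position, you choose $q_n$ in the complement of a finite union of closed nowhere-dense sets (singletons, metric spheres, and bisectors). The substantive work is showing that bisectors $B_{ij}$ and spheres $\Sigma_{i,c}$ have empty interior, which you do via a clean geodesic-concatenation argument and uniqueness of geodesics. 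Both proofs rely on minimizing geodesics (available by Hopf--Rinow since $M$ is closed), but use them differently: the paper uses a geodesic as a direction in which to perturb, while you use geodesics to rule out open patches of the bad sets. Your approach is slightly more machinery-heavy (first variation to know a minimizing broken geodesic is smooth, ODE uniqueness for geodesics) but in exchange avoids the paper's somewhat informal ``small enough not to introduce new problems'' step and gives a clean structural reason---nowhere-density---why a good $q_n$ exists.
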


\begin{proof}
    For this proof, we call each instance of four points $p_i,p_j,p_r,p_s$ with $d_M(p_i,p_j)=d_M(p_r,p_s)$, where $p_i\neq p_j$, $p_r\neq p_s$ and $\{p_i,p_j\}\neq\{p_r,p_s\}$, a problem. We show that we can get rid of any given problem by moving just one point by an arbitrarily small amount. In order to see this, note that there are two essentially different cases. The first case is the one where all four points are distinct. In the second case we have just three distinct points, one of which appears on both sides of the equation. However, in both cases there is a point which appears only once. Without loss of generality let $p_i$ be that point. Then, consider a length minimizing geodesic $\gamma$ from $p_i$ to $p_j$. If we move $p_i$ along $\gamma$ towards $p_j$, we reduce $d_M(p_i,p_j)$ but do not change $d_M(p_r,p_s)$. If the amount we move $p_i$ is small enough, then we will also not introduce any new problems. Repeating this procedure finitely many times, we can get rid of all the problems one by one. For each point $p_i$ we denote the position where it ended up by $q_i$, which yields points in general position.
\end{proof}

Let us recall the definitions of the Whitney $C^0$- and $C^1$-topologies on $\X(M)$. We give the definitions in the same spirit as they are given in \cite{palis2012geometric}, however we define the $C^1$-topology on $\X(M)$ directly, instead of using an embedding $M \hookrightarrow \R^s$ and then topologizing $\X(M)$ as a closed subspace of $C^1(M,\R^s)$. The resulting topology is the same.

We start by saying that, for any matrix $A \in \R^{p\times m}$, we write $\norm{A}$ for the Frobenius norm of $A$, i.e. the square root of the sum of all squared entries of $A$. In the special case where $p=1$ or $m=1$, this equals the standard Euclidean norm on $R^m$ or $\R^p$, respectively. Recall that the Frobenius matrix norms are submultiplicative, i.e. for any $A \in \R^{k\times p}$ and $B \in \R^{p\times m}$ we have $\norm{AB} \le \norm{A} \cdot \norm{B}$.

For any $r>0$, we denote by $B_r(0) \subseteq \R^m$ the ball with radius $r$ centered around the origin. We will consider smooth functions $F\colon B_4(0) \to \R^p$, for arbitrary values of $p$. This general setup allows us to define, at the same time, the $C^0$-norms and $C^1$-norms of functions $f\colon B_4(0) \to \R$, vector fields $\vec{v}\colon B_4(0) \to \R^m$, and their respective differentials. Note that the differential of a vector field on $B_4(0)$ is a function $D\vec{v}\colon B_4(0) \to \R^{m\times m}$, and we identify $\R^{m\times m} \cong \R^{m^2}$ in order to include also this in our definition. Hence, for any smooth function $F\colon B_4(0) \to \R^p$ we define
\begin{align*}
    \norm{F}_0 &:= \sup_{x \in B_2(0)} \norm{F(x)}, \\
    \norm{F}_1 &:= \max \{ \norm{F}_0, \norm{DF}_0 \}.
\end{align*}
Note that, even though we are considering functions defined on $B_4(0)$, in the definition of these norms we only look at the values on $B_2(0)$. Thus they are technically speaking not norms, but defining them in this way will be convenient for us when defining the $C^0$- and $C^1$-norms for vector fields on manifolds.

Let us cover $M$ by sets $V_1,\ldots,V_k$ such that each of the sets is contained in a larger set $V_i \subseteq U_i$ on which we are given a chart $\sigma_i\colon U_i \to B_4(0)$ with $\sigma_i(V_i)=B_2(0)$. Given these charts, any vector field $v\in \X(M)$ induces Euclidean vector fields $\vec{v}_i\colon B_4(0) \to \R^m$ by
\begin{center}
    \begin{tikzcd}
        B_4(0) \ar[rrrr,bend right=20,"=:\vec{v}_i"] \ar[r,"\sigma_i^{-1}"] &U_i \ar[r,"v|_{U_i}"] &TU_i \ar[r,"\cong"] &U_i \times \R^m \ar[r,"\operatorname{pr}_2"] &\R^m,
    \end{tikzcd}
\end{center}
where the diffeomorphism $TU_i \cong U_i \times \R^m$ is induced by the chart $\sigma_i$.
Explicitly, if we denote by $(x_i^1,\ldots,x_i^m)$ the local coordinates on $U_i$ induced by $\sigma_i$, then %$\vec{v}_i$ is given by
${\vec{v}_i(x) = \begin{pmatrix}
    v_i^1(\sigma_i^{-1}(x))\\
    \vdots \\
    v_i^m(\sigma_i^{-1}(x))
\end{pmatrix}}$, 
where the functions $v_i^j\colon U_i \to \R$ are defined through the equation $v(p) = \sum_j v_i^j(p) \cdot \frac{\partial}{\partial x_i^j}|_p$.
We define the $C^0$-norm and the $C^1$-norm of $v\in \X(M)$ by  
\[
\norm{v}_0 := \max_{i=1,\ldots,k} \norm{\vec{v}_i}_0, \quad \quad
\norm{v}_1 := \max_{i=1,\ldots,k} \norm{\vec{v}_i}_1.
\]

One can check that this defines a norm on $\X(M)$ and that the topology induced from this norm does not depend on the choices that are involved. We now state and prove the second lemma needed for \Cref{prop:general-position-open-dense}. It will be used to show that it is possible to $C^1$-perturb a vector field and move a singular point to an arbitrary point in a small neighbourhood.

\begin{lemma}\label{lem:gen-pos-dense-2}
    Let $\vec{v}\colon B_4(0) \to \R^m$ be a smooth map and let $\eps>0$. Then there exists $\delta>0$ such that for all $y \in B_\delta(0)$ there exists a diffeomorphism $\varphi=\varphi_y\colon B_4(0)\to B_4(0)$ that satisfies
    \begin{itemize}
        \item $\varphi(x)=x$ for $\norm{x}\ge 1$,
        \item $\varphi(y)=0$,
        \item $\norm{\vec{v}-\vec{v}\circ \varphi}_1 < \eps$.
    \end{itemize}
\end{lemma}

\begin{proof}
    We start by picking a smooth function $\rho\colon \R^m \to \R$ with the following properties:
    \begin{itemize}
        \item $\rho(0)=1$,
        \item $0\le \rho(x) \le 1$ for $\norm{x}\le \frac{1}{2}$,
        \item $\rho(x)=0$ for $\norm{x}\ge \frac{1}{2}$.
    \end{itemize}
    We can explicitly choose $\delta$ to be 
    \[
    \delta := \min \left\{ \frac{1}{2}, \frac{\eps}{L_1}, \frac{1}{\norm{\rho}_1}, \frac{\eps}{L_2+\norm{\vec{v}}_1\cdot \norm{\rho}_1} \right\},
    \]
    where the constants $L_1=L_1(\vec{v})$ and $L_2=L_2(\vec{v})$ are the Lipschitz constants of $\vec{v}$ and $D\vec{v}$, i.e.
    \[
    L_1 := \sup \left\{ \frac{\norm{\vec{v}(x)-\vec{v}(y)}}{\norm{x-y}} \mid x \neq y \in B_2(0) \right\}
    \quad \text{and} \quad
    L_2 := \sup \left\{ \frac{\norm{D\vec{v}(x)-D\vec{v}(y)}}{\norm{x-y}} \mid x \neq y \in B_2(0) \right\}.
    \]
    The reason for this choice will become clear later. The values of $L_1$ and $L_2$ are bounded by the $C^1$-norms of $\vec{v}$ and $D\vec{v}$, respectively. In particular, they are finite, as the closure of $B_2(0)$ is compact. We now define the map $\varphi=\varphi_y\colon B_4(0) \to B_4(0)$ by $\varphi(x) := x - \rho(x-y)\cdot y$. For  $\norm{y}< \delta$ and $x \in B_2(0)$, we have
    \begin{align*}
        \norm{D\varphi(x)} = \norm{\one - D(\rho(x-y)\cdot y)} = \norm{\one - \nabla\rho(x-y)\cdot y^T} &\ge \norm{\one} - \norm{\nabla\rho(x-y)\cdot y^T} \\
        &\ge 1 - \norm{\nabla\rho(x-y)} \cdot \norm{y^T} \\ 
        &\ge 1 - \norm{\rho}_1\cdot \norm{y} > 0.
    \end{align*}
    For $\norm{x}\ge 1$ we have $\varphi(x)=x$ since for $\norm{y}\le \frac{1}{2}$ and $\norm{x}\ge 1$ we have $\norm{x-y}\ge \frac{1}{2}$ and thus $\rho(x-y)=0$. Therefore, $\varphi$ is a diffeomorphism.
    
    For any $x \in B_2(0)$ we get
    \[
    \norm{\vec{v}(x)-\vec{v}(\varphi(x))} \le L_1 \norm{x-\varphi(x)} = L_1 \norm{\rho(x-y)\cdot y} \le L_1 \norm{y}
    < L_1 \delta \le \eps.
    \]
    Also we have
    \begin{align*}
        \norm{D\vec{v}(x) - D(\vec{v}\circ\varphi)(x)} &= \norm{D\vec{v}(x)-D\vec{v}(\varphi(x)) \cdot D\varphi(x)} \\ 
        &\le \norm{D\vec{v}(x) - D\vec{v}(\varphi(x))} + \norm{D\vec{v}(\varphi(x))-D\vec{v}(\varphi(x))\cdot D\varphi(x)} \\
        &\le L_2 \norm{x-\varphi(x)} + \norm{D\vec{v}(\varphi(x))} \cdot \norm{\one - D\varphi(x)} \\
        &\le L_2 \norm{y} + \norm{\vec{v}}_1 \cdot \norm{(\nabla \rho(x-y)) \cdot y^T} \\
        &\le L_2 \norm{y} + \norm{\vec{v}}_1 \cdot \norm{\rho}_1 \cdot \norm{y} \\
        &= (L_2 + \norm{\vec{v}}_1 \cdot \norm{\rho}_1 ) \norm{y}
        < (L_2 + \norm{\vec{v}}_1 \cdot \norm{\rho}_1 ) \delta \le \eps,
    \end{align*}
    where in the third inequality we use that $\one - D \varphi(x) = D(x - \varphi(x)) = D(\rho(x-y)\cdot y) = \nabla \rho(x-y) \cdot y^T$. This finishes the proof.
\end{proof}

We are now ready to prove \Cref{prop:general-position-open-dense}.

\begin{proof}[Proof of \Cref{prop:general-position-open-dense}]
    Let $v \in \X_{gMS+}(M)$ and choose $\eps>0$ small enough, such that for any $a,b,c,d \in \Sing(v)$ with $a\neq b$, $c\neq d$ and $\{a,b\} \neq \{c,d\}$ we have $\eps< |d_M(a,b)-d_M(c,d)|$. By \Cref{thm:str-stab} there exists a neighbourhood $\Nn$ of $v$ in $\X^1(M)$ such that for all $w \in \Nn$, the singular points of $w$ are within $\eps$-distance from the singular points of $v$. By our choice of $\eps$ this implies that $\Nn \subseteq \X_{gMS+}(M)$, so we have shown that $\X_{gMS+}(M)$ is open.

    It remains to show that $\X_{gMS+}(M)$ is dense in $\X^1_{gMS}(M)$. For this, let $v \in \X^1_{gMS}(M)$ and let $\Nn \subseteq \X^1_{gMS}(M)$ be an open neighbourhood of $v$. Denote by $p_1,\ldots,p_n \in M$ the singular points of $v$. We want to construct $w \in \Nn$ with singular points $q_1,\ldots,q_n$ which are in general position. 

    We cover $M$ by $V_1,\ldots,V_k$ ($k\ge n$), contained in charts $(U_i,\sigma_i)$, satisfying
    \begin{itemize}
        \item $\sigma_i\colon U_i \to B_4(0)\subseteq \R^m$ with $\sigma_i(V_i)=B_2(0)$,
        \item $W_i \cap U_j = \emptyset$ for $i\neq j$ with $W_i:= \sigma_i^{-1}(B_1(0))$,
        \item $p_i \in V_i$ and $\sigma_i(p_i)=0$ for $i=1,\ldots,n$,
        \item $\norm{\sigma_i(q)}=d_M(p_i,q)$ for all $i=1,\ldots,n$ and for all $q \in W_i$.
    \end{itemize}
    The last condition can be satisfied by choosing the $\sigma_i$ to agree with the exponential map at $p_i$ for nearby points. The topology induced on $\X(M)$ by the norm $\norm{\argdot}_1$ yields $\X^1(M)$. Now since $\Nn$ is a neighbourhood of $v$ in $\X^1(M)$, there exists $\eps>0$ such that $\{w \in \X^1(M) \mid \norm{v-w}_1 < \eps \} \subseteq \Nn$. Thus it suffices to show that there exists $w \in \X^1_{gMS+}(M)$ with $\norm{v-w}_1 < \eps$. We want to show the following claim.\vspace{1em}

    \underline{Claim.} There exists $\delta>0$ such that, for arbitrarily chosen $q_1,\ldots,q_n \in M$ with $d_M(p_i,q_i)<\delta$ for all $i$, there exists $w \in \X^1(M)$ with $\norm{v-w}_1 < \eps$ and $\Sing(w)=\{q_1,\ldots,q_n\}$. \vspace{1em}

    Assuming this claim, the theorem follows since by \Cref{lem:gen-pos-dense-1} we can choose the points $q_i \in M$ in such a way that $w \in \X^1_{gMS+}(M)$. It thus remains only to show the claim. For this we apply \Cref{lem:gen-pos-dense-2} to each of the local vector fields $\vec{v}_i\colon B_4(0) \to \R^m$ and obtain $\delta_i>0$ such that for $y \in B_{\delta_i}(\sigma_i(p_i))$ there exists $\varphi_i\colon B_4(0)\to B_4(0)$ with the properties listed in \Cref{lem:gen-pos-dense-2}. Define $\delta:=\min_i \delta_i$. 
    %Given $q \in W_i$, we write $\vec{q}:= \sigma_i(q) \in B_1(0)$. 
    Thus for $q_i \in M$ with $d_M(p_i,q_i) < \delta$, there exists $\varphi_i \colon B_4(0) \to B_4(0)$ such that $\varphi_i$ is the identity outside of $B_1(0)$, $\varphi_i(\sigma_i(q_i))=0$, and $\norm{\vec{v}_i - \vec{v}_i \circ \varphi_i}_1 < \eps$. 
    
    We now define $w \in \X(M)$ as follows. We define $w_i \in \X(U_i)$ by
    \begin{center}
        \begin{tikzcd}
            U_i \ar[rrrrr,bend right=20,"=:w_i"] \ar[r,"\Delta"] &U_i \times U_i \ar[r,"\id \times \sigma_i"] &U_i \times B_4(0) \ar[r,"\id \times \varphi_i"] &U_i \times B_4(0) \ar[r,"\id \times \vec{v}_i"] &U_i \times \R^m \ar[r,"\cong"] &TU_i.
        \end{tikzcd}
    \end{center}
    Note that $w_i=v$ on $U_i \setminus W_i$. Thus, for $i\neq j$, since $U_i \cap W_j=\emptyset$ and $W_i \cap U_j=\emptyset$, we have $w_i=w_j=v$ on $U_i \cap U_j$, thus we can glue together the $w_i$ to obtain $w \in \X(M)$ with $w|_{U_i}=w_i$ for all $i$. 

    We first check that $\Sing(w)=\{q_i\}$. Outside of the sets $W_i$, $i=1,\ldots,n$, we have $w=v\neq 0$. Inside $W_i$, $v$ has exactly one zero, namely $p_i$. For any $q \in W_i$ we thus have
    \[
    w(q)=0 \iff w_i(q)=0 \iff \vec{v}_i(\varphi_i(\sigma_i(q)))=0 \iff \varphi_i(\sigma_i(q))=0 \iff \sigma_i(q) = \sigma_i(q_i) \iff q=q_i.
    \]
    Thus $\Sing(w)= \bigcup_i \Sing(w)\cap W_i = \bigcup_i \{q_i\}$. It remains to show that $\norm{v-w}_1 < \eps$. Indeed, we have
    \[
    \norm{v-w}_1 = \max_i \norm{\vec{v}_i - \vec{w}_i}_1 = \max_i \norm{\vec{v}_i - \vec{v}_i\circ \varphi_i}_1 < \eps. 
    \]
    This proves the claim and hence the proposition.
\end{proof}

\subsection{Assigning a factored chain complex to a vector field}

Now we are in the position to assign a factored chain complex to a gradient-like Morse-Smale vector field in general position $v \in \X_{gMS+}(M)$. The idea is to take the Morse complex $\MC_\bullet(v)$ and apply a sequence of simplifications, each determined by a pair $(a,b)$ of singular points of index $k$ and $k-1$, for some $k=1,\ldots,n$. This yields a factored chain complex with constant homology whose tagged barcode consists of finite bars of the form $[0,t,t)$ and infinite bars $[0,0,\infty)$. Each finite bar corresponds to a pair of singular points and the value $t$ is related to their distance on $M$. The infinite bars correspond to singular points which are not paired up, and they can be viewed as generators for the homology of $M$.

More precisely, we endow $\MC_\bullet(v)$ with some additional information in order to apply \Cref{def:assign-parchain-to-vectorfield} or \Cref{def:assign-parchain-to-vectorfield2}.
We view the Morse complex $\MC_\bullet(v)$ as a based chain complex, where the basis in each degree is given by the singular points of that index. If we are moreover given a distance $d_M$ on $M$ (for example the shortest path distance induced from a Riemannian metric), then we view $\MC_\bullet(v)$ as a weighted based chain complex, with the weight of a pair of basis elements given by the distance of the corresponding singular points on $M$. 
Note that in case $M$ is not connected, the shortest path distance is not defined between points of different connected components. In this case, we set it to be any fixed number greater or equal to the maximum distance between any two points from the same component. One can check that this yields a metric. 
We will slightly abuse notation and always write $\MC_\bullet(v)$ for the Morse complex of $v$, but consider it to be a chain complex, or a based chain complex, or a weighted based chain complex, depending on the context.
If $v$ is in general position, then $\MC_\bullet(v)$ is generic, thus in particular ordered, so we can apply \Cref{def:assign-parchain-to-vectorfield} or \Cref{def:assign-parchain-to-vectorfield2} and construct $X(v):= X(\MC_\bullet(v))\in \TEPCh$ or $Y(v):= Y(\MC_\bullet(v))\in \TEPCh$, respectively. Note that the basis elements that appear in \Cref{def:assign-parchain-to-vectorfield} or \Cref{def:assign-parchain-to-vectorfield2} always correspond to singular points of $v$, even after multiple iterations. We therefore get a sequence of pairs of singular points $(a_1,b_1),\ldots,(a_r,b_r)$, whose indices always differ by one. 
We say that the pairs $(a_1,b_1),\ldots,(a_r,b_r)$ are simplified in the construction of $X(v)$ or $Y(v)$.

The resulting tagged barcode, together with the pairs of singular points that were simplified in the construction, then gives a topological invariant of the vector field. It can be interpreted as a sequence of algebraic simplifications that one can do in order to simplify the Morse complex of the vector field.

\begin{remark}
    One could also use other weights than the distance between the singular points. One may try to use the length of a separatrix or the maximal value of the norm of the vector field along a separatrix, instead. However, in these cases, one would have weights only between points that are connected by a separatrix. It is then not clear a priori how one would update the weights after a simplification of the Morse complex. For this reason, we are using the distance between the points. However, thanks to the generality of the definitions and results of \Cref{sec:parametrize-ch}, other choices for the weights are not in principle precluded.

    In the case where we are studying a Morse-Smale function $f\colon M \to \R$ via the Morse complex of its gradient, $\MC_\bullet(-\nabla f)$, there is another obvious choice for the weights, namely the distance of function values. In that case, the tagged barcode one obtains is closely related to the persistence barcode of $f$, see \Cref{sec:scalar-fields}. Since we are also allowing non-conservative vector fields, however, this approach does not work more generally.
\end{remark}

\begin{remark}[Invariance] Usually, when talking about (topological/geometric) invariants, one would like to describe a set of transformations of the input objects under which the outcome is invariant. In our case, if two vector fields $v,w \in \X_{gMS+}(M)$ are topologically equivalent and the topological equivalence can be realized by a homeomorphism $h\colon M \to M$ that restricts to an isometry of the singular points, then $X(v) \cong X(w)$ and $Y(v) \cong Y(w)$. 
\end{remark}

To the best of our knowledge, ours is the first method that produces a barcode of a vector field with this invariance property.

\begin{remark}[Non-triviality]
    Non-triviality refers to the fact that an invariant is able to distinguish input objects if they are \lq\lq different enough" in some sense. In our case, we can say that the constructions $X$ and $Y$ are sensitive enough to distinguish the vector fields $v$ and $w$ in either of the following situations:
    \begin{enumerate}[(i)]
        \item The two vector fields $v$ and $w$ are defined on  manifolds $M$ and $M'$ whose singular homology groups are not isomorphic.
        \item The two vector fields $v$ and $w$ are both defined on $M$, but there exists $1\le k\le \dim M$ such that $|\Sing_k(v)| \neq |\Sing_k(w)|$.
        \item The two vector fields $v$ and $w$ are both defined on $M$, and $|\Sing_k(v)| = |\Sing_k(w)|$ for all $k$, but the distance values between singular points of $v$ and those of $w$ are all distinct.
    \end{enumerate}
\end{remark}

\begin{figure}[t]
    \centering
    \includegraphics[height=5cm]{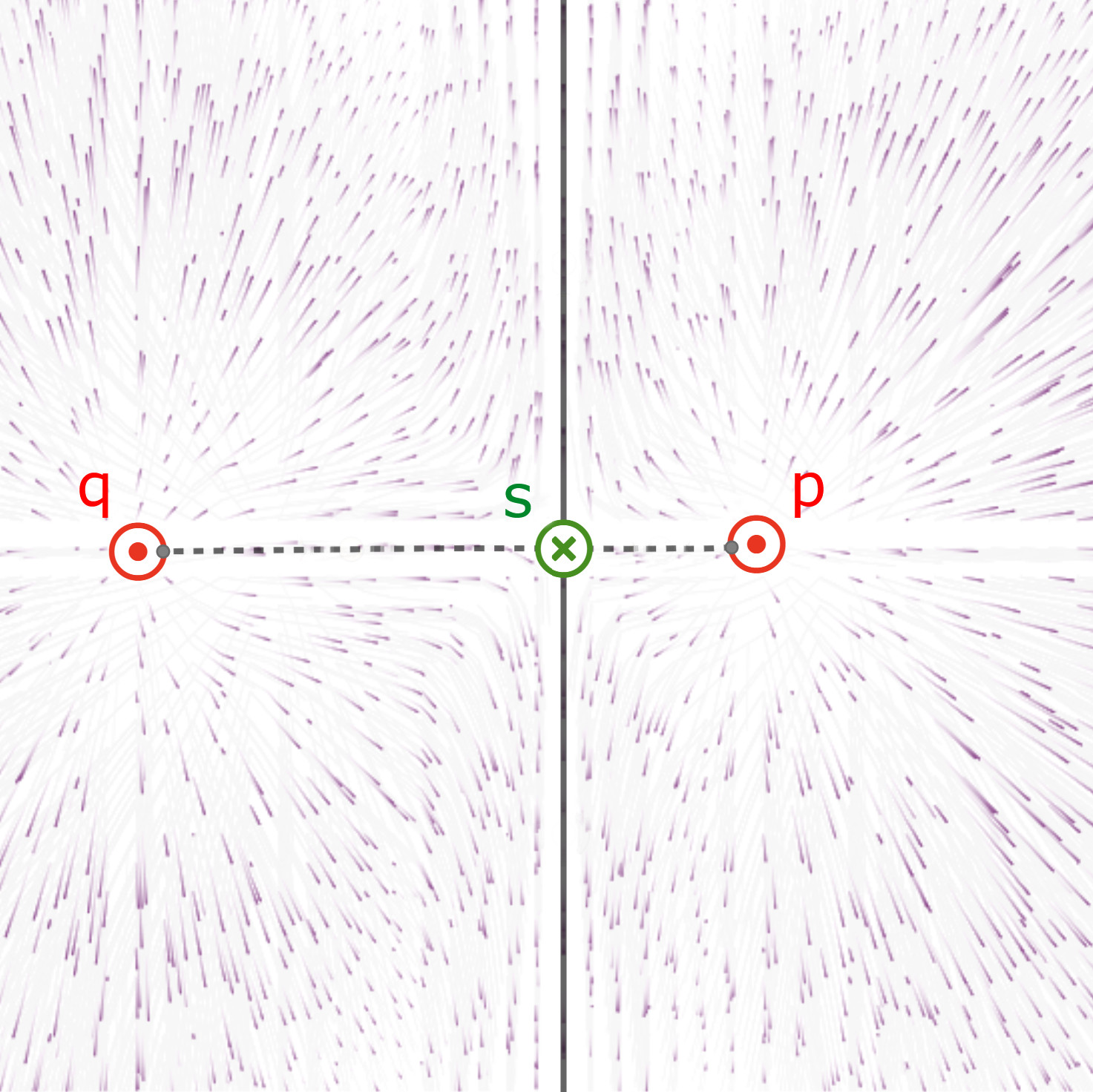}
    \hspace{1cm}
    \includegraphics[height=5cm]{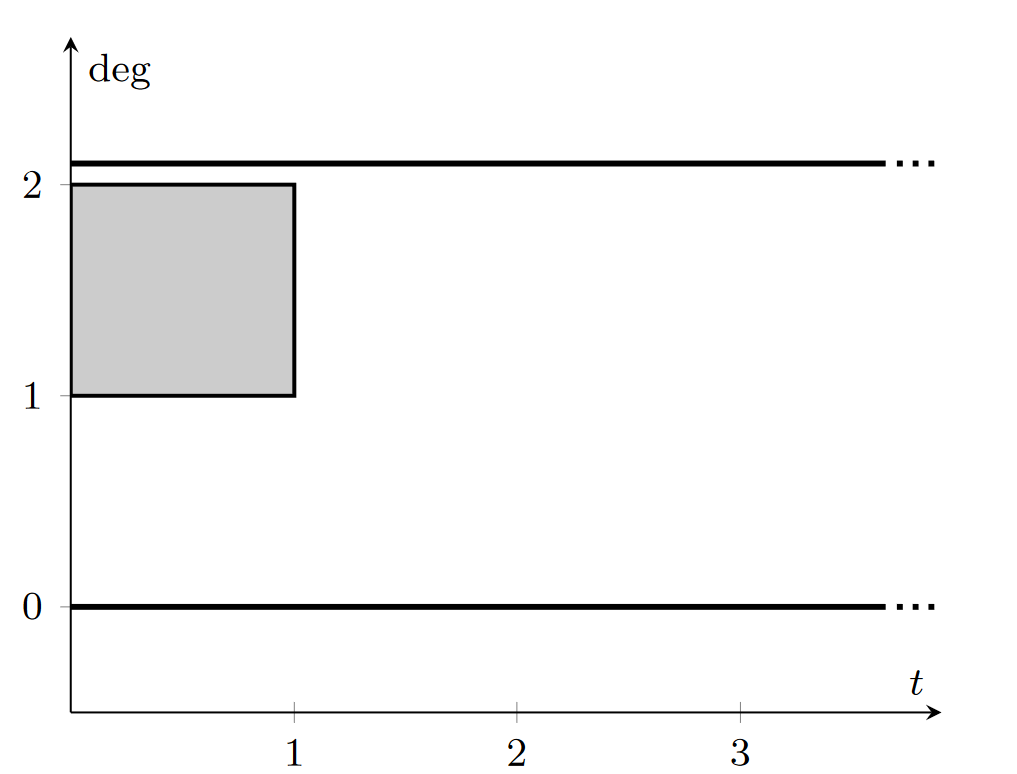}
    \caption{Left: Gradient-like Morse-Smale vector field $v$ on the 2-sphere with two singular points $p,q$ of index 2, one singular point $s$ of index 1, and one singular point $x$ of index 0. The point $x$ is represented in the image by the whole boundary.
    Right: Visualization of the resulting tagged barcode $\tBarc(Y(v))$, as explained in \Cref{ex:tagged-barcode}.}
    \label{fig:vector-field-barcode}
\end{figure}
 
\begin{example}\label{ex:tagged-barcode}
    In \Cref{fig:vector-field-barcode} we see a gradient-like Morse-Smale vector field $v \in \X_{gMS+}(S^2)$. It is defined on the 2-sphere, which is represented as a square, where all of the boundary is contracted to a point $x$, which is a singular point of index 0 for $v$. We denote the remaining singular points by $p,q,s$. Let us assume that $S^2$ is equipped with a Riemannian metric such that $d(p,s)=1$, $d(q,s)=2$, and also all other distances are pairwise different and larger than $1$, so that $v$ is in general position. The Morse complex of $v$ is 
    \begin{center}
        \begin{tikzcd}
            \Fi^2= \Span(p,q) \ar[rr,"{[1 , 1]}"]
            &&\Fi=\Span(s) \ar[rr,"{[0]}"]
            &&\Fi=\Span(x). 
        \end{tikzcd}
    \end{center}
    The weights are given by the distances, so for example $\operatorname{w}(p,s)=1$ and $\operatorname{w}(q,s)=2$. The only pairs that we could possibly simplify are $(p,s)$ and $(q,s)$, so since $\operatorname{w}(p,s)<\operatorname{w}(q,s)$ we will simplify $(p,s)$ at time $1$, giving rise to the tagged interval $[0,1,1)$ in degree 2 the tagged barcode of $X(v)$. After this simplification, all differentials are zero, so $q$ and $x$ cannot be simplified, giving rise to the tagged intervals $[0,0,\infty)$ in the degrees 2 and 0 respectively. On the right of \Cref{fig:vector-field-barcode} we visualize the tagged barcode in the following way: Finite bars $[0,t,t)$ in degree $n$ are showed as a gray block spanning from degree $n-1$ to degree $n$ on the vertical axis and from $0$ to $t$ on the horizontal axis. Infinite bars $[0,0,\infty)$ in degree $n$ are drawn as a black horizontal line on the height $n$.
\end{example}

\begin{figure}[ht]
    \centering
    \includegraphics[width=0.6\linewidth]{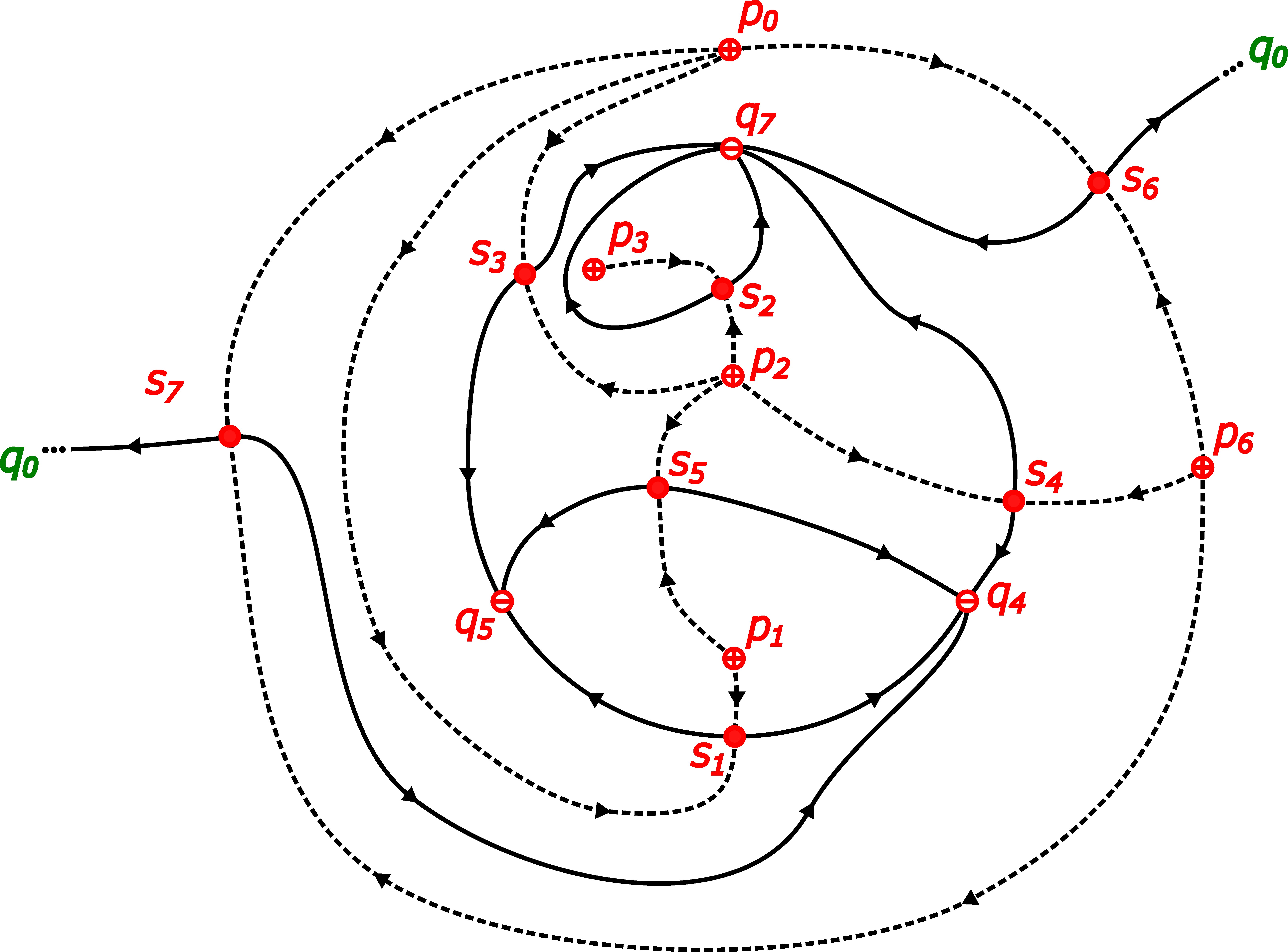}
    \resizebox{0.38\linewidth}{!}{
\begin{tikzpicture}[scale=0.45]
    % Define the grid area
    \draw[step=1cm,gray!30] (-0.8,-0.8) grid (10.8,10.8); % Light gray grid lines every 1 cm
    \draw[step=5cm,gray!70,very thick] (-0.8,-0.8) grid (10.8,10.8); % Darker, thicker grid lines every 5 cm

    % Draw the s-axis and t-axis
    \draw[thick,->] (-0.8,0) -- (10.8,0) node[above] {$s$}; % s-axis with arrow
    \draw[thick,->] (0,-0.8) -- (0,10.8) node[above] {$t$}; % t-axis with arrow

    % Add small ticks on the s-axis and labels at 1, 5, 10
    \foreach \s in {1,2,...,10} {
        \draw[thick] (\s,0.1) -- (\s,-0.1); % Small tick on s-axis
    }
    \foreach \s in {1,5} {
        \node at (\s+0.3,-0.4) {\s}; % Labels for s-axis
    }

    % Add small ticks on the t-axis and labels at 1, 5, 10
    \foreach \t in {1,2,...,9} {
        \draw[thick] (0.1,\t) -- (-0.1,\t); % Small tick on t-axis
    }
    \foreach \t in {1,5} {
        \node at (-0.4,\t+0.3) {\t}; % Labels for t-axis
    }

    % The infinity label
    \node at (-0.7,10.3) {$\infty$};

    \draw[thick] (0,0) -- (10.8,10.8); % diagonal

    %The infinite point of degree 0:
        \node at (0,10) {\color{red}\scalebox{1}{$\ast$}};
    
    %The finite points in the persistence diagram of degree 1:
    \foreach \y in {1.5,2.6,7.9} {
        \node at (\y,\y) {\color{green}\scalebox{0.5}{$\blacksquare$}}; % Points of the persistence diagram
    }

    %The finite points in the persistence diagram of degree 2:
    \foreach \x in {1.1,1.2,0.9,4.2} {
        \node at (\x,\x) {\color{blue}\scalebox{1}{$\circ$}}; % Points of the persistence diagram
    }

    %The infinite point of degree 2:
        \node at (0,10) {\color{blue}\scalebox{1.2}{$\circ$}};

    % Add the textbox
    \draw[fill=white,draw=black,thick,rounded corners=1mm] (5.3,1.2) rectangle (10.5,4.8); % Draw the rectangle
    \node[align=center] at (7.9,3) { % Center of the rectangle (8.5,3)
        {\color{red}\scalebox{1}{$\ast$}} $\I^0[0,s,t)$ \\[2pt] % First row
        {\color{green}\scalebox{0.5}{$\blacksquare$}} $\I^1[0,s,t)$ \\[2pt] % Second row
        {\color{blue}\scalebox{1}{$\circ$}} $\I^2[0,s,t)$   % Third row
    };
\end{tikzpicture}}
    \caption{Left: Gradient-like Morse-Smale vector field on the 2-sphere with five sources, seven saddles, and four sinks. Right: The corresponding tagged barcode is shown in a persistence diagram like fashion as explained in Example \ref{ex:tagged-barcode-complicated}.}
    \label{fig:vector-field-barcode-complicated}
\end{figure}

\begin{example}\label{ex:tagged-barcode-complicated}
    In \Cref{fig:vector-field-barcode-complicated} we see a more complicated gradient-like Morse-Smale vector field $v \in \X_{gMS+}(S^2)$ that calls for a different visualization of its tagged barcode. Moreover, in this example we get also pairings of critical points that are not connected by a flow line and also some pairs get simplified later than pairs having a larger distance. The vector field $v$ has sources $p_i$, saddles $s_i$, and sinks $q_i$, with the indices chosen in such a way that, after applying \Cref{alg:tagged-barcode-computation} to the Morse complex $\MC_\bullet(v)$, the simplified pairs are $(p_1,s_1), (p_2,s_2), (p_3,s_3), (s_4,q_4), (s_5,q_5), (p_6,s_6), (s_7,q_7)$, in this order. The points $p_0$ and $q_0$ do not get paired. The vector field is represented in such a way that all the fixed points lie on the north hemisphere except for $q_0$ which is located at the south pole. The distances in the image reflect the distances between the fixed points on the manifold. The weights of the relevant pairs, given by their distances, are $\operatorname{w}(p_1,s_1)=1.1, \operatorname{w}(p_2,s_2)=1.2, \operatorname{w}(p_3,s_3)=0.9, \operatorname{w}(s_4,q_4)=1.5, \operatorname{w}(s_5,q_5)=2.6, \operatorname{w}(p_6,s_6)=4.2, \operatorname{w}(s_7,q_7)=7.9$. The Morse complex of $v$ is 
    \begin{center}
        \begin{tikzcd}[ampersand replacement=\&, column sep = 2.2cm]
            \Fi^5= \Span(p_0,p_1,p_2,p_3,p_6) \ar[r,"{\begin{bmatrix}
                1 &1 &0 &0 &0 \\
                0 &0 &1 &1 &0 \\
                1 &0 &1 &0 &0 \\
                0 &0 &1 &0 &1 \\
                0 &1 &1 &0 &0 \\
                1 &0 &0 &0 &1 \\
                1 &0 &0 &0 &1
            \end{bmatrix}}"]
            \&\Fi^7=\Span(s_1,\ldots,s_7) \ar[r,"{\begin{bmatrix}
                0 &0 &0 &0 &0 &1 &1 \\
                1 &0 &0 &1 &1 &0 &1 \\
                1 &0 &1 &0 &1 &0 &0 \\
                0 &0 &1 &1 &0 &1 &0 
            \end{bmatrix} \vspace{1.5cm}}"{yshift=0.4cm}]
            \&\Fi^4=\Span(q_0,q_4,q_5,q_7). 
        \end{tikzcd}
    \end{center}
    
    It follows that 
    \begin{align*}
        Y(v) =\hspace{1em} &\I^0[0,0,\infty) \oplus \\
        &\I^1[0,1.5,1.5) \oplus \I^1[0,2.6,2.6) \oplus \I^1[0,7.9,7.9) \oplus \\
        &\I^2[0,1.1,1.1) \oplus \I^2[0,1.2,1.2) \oplus \I^2[0,0.9,0.9) \oplus \I^2[0,0,\infty).
    \end{align*}
\end{example}

\subsection{Local stability}\label{sec:local-stab}

Recall that $\X^1_{gMS}(M)$ is the space of gradient-like Morse-Smale vector fields on $M$ in general position, endowed with the Whitney $C^1$ topology. We endow $\TEPCh$ with the topology induced by the interleaving distance. The goal of this section is to prove the following statement.

\begin{theorem}\label{thm:local-stability}
    The maps $X,Y \colon \X^1_{gMS+}(M) \to \TEPCh$, which assign to a gradient-like Morse-Smale vector field in general position $v$ the factored chain complexes $X(v)$ and $Y(v)$, built as in \Cref{def:assign-parchain-to-vectorfield} and \Cref{def:assign-parchain-to-vectorfield2}, respectively, are continuous.
\end{theorem}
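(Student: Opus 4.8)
The strategy is to reduce continuity at a point $v_0 \in \X^1_{gMS+}(M)$ to the quantitative bound in \Cref{lem:stability-X-Y}. The key point is that the Morse complex is \emph{locally constant up to isomorphism}: once we know that a nearby $w$ has a Morse complex isomorphic to that of $v_0$ via a basis bijection respecting the ordering of the weights, then \Cref{lem:stability-X-Y} gives $d_I(X(v_0),X(w)) \le n\, d_\varphi(\MC_\bullet(v_0),\MC_\bullet(w))$ and $d_I(Y(v_0),Y(w)) \le d_\varphi(\MC_\bullet(v_0),\MC_\bullet(w))$, so it suffices to show that the right-hand sides tend to $0$ as $w \to v_0$ in $\X^1(M)$.

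First I would fix $v_0$ and set $S_0 = \Sing(v_0) = \{p_1,\dots,p_N\}$; since $v_0$ is in general position, $\xi(S_0) > 0$. Choose $\eps_0 > 0$ small enough that $\eps_0 < \tfrac{1}{4}\xi(S_0)$ and also smaller than half the minimal distance between distinct singular points. By \Cref{thm:str-stab}, gradient-like Morse-Smale vector fields are structurally stable, so there is a neighbourhood $\Nn$ of $v_0$ in $\X^1(M)$ such that every $w \in \Nn$ is topologically equivalent to $v_0$ via a homeomorphism $h_w$ with $d_M(p, h_w(p)) \le \eps_0$ for all $p$; moreover $\Nn$ can be shrunk to lie inside $\X_{gMS+}(M)$ using \Cref{prop:general-position-open-dense}. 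The topological equivalence $h_w$ restricts to a bijection $S_0 \to \Sing(w)$ preserving indices, and by \Cref{lem:top-equiv-induces-iso} it induces an isomorphism of based chain complexes $\varphi_w\colon \MC_\bullet(v_0) \xrightarrow{\ \cong\ } \MC_\bullet(w)$ sending the basis element $p_i$ to the singular point $q_i := h_w(p_i)$. Next I would verify that $\varphi_w$ respects the ordering of the weights: the weight of a pair $(p_i, p_j)$ in $\MC_\bullet(v_0)$ is $d_M(p_i,p_j)$, and the weight of $(\varphi_w(p_i),\varphi_w(p_j))$ is $d_M(q_i,q_j)$; since $d_M(p_i,q_i), d_M(p_j,q_j) \le \eps_0$, the triangle inequality gives $|d_M(p_i,p_j) - d_M(q_i,q_j)| \le 2\eps_0 < \tfrac{1}{2}\xi(S_0)$, which is smaller than half the gap between any two distinct weights of $\MC_\bullet(v_0)$, so the strict order of the weights is preserved under $\varphi_w$, and in particular $\MC_\bullet(w)$ is still generic. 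Hence the hypotheses of \Cref{lem:stability-X-Y} are met, and $d_\varphi(\MC_\bullet(v_0),\MC_\bullet(w)) = \max_{i,j}|d_M(p_i,p_j) - d_M(q_i,q_j)| \le 2\,\max_i d_M(p_i, q_i)$.

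It then remains to show that $\max_i d_M(p_i,q_i) \to 0$ as $w \to v_0$, i.e. the singular points of $w$ converge to those of $v_0$. This follows from structural stability: given any $\delta \in (0,\eps_0)$, \Cref{thm:str-stab} applied with $\delta$ in place of $\eps_0$ yields a (possibly smaller) neighbourhood $\Nn_\delta$ of $v_0$ on which the topological equivalence moves points by at most $\delta$, hence in particular moves singular points by at most $\delta$, so $d_\varphi(\MC_\bullet(v_0),\MC_\bullet(w)) \le 2\delta$ for $w \in \Nn_\delta$. Feeding this into the bounds from \Cref{lem:stability-X-Y} gives $d_I(X(v_0),X(w)) \le 2N\delta$ and $d_I(Y(v_0),Y(w)) \le 2\delta$ for $w \in \Nn_\delta$ (with $N = |\Sing(v_0)|$ an upper bound on the number $n$ of simplified pairs), which proves continuity of both $X$ and $Y$ at $v_0$. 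Since $v_0$ was arbitrary, the maps are continuous.

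The main obstacle is the bookkeeping in the second paragraph: one must be careful that the genericity/ordering hypothesis of \Cref{lem:stability-X-Y} is genuinely preserved, which is exactly why the threshold $\eps_0$ must be chosen relative to $\xi(S_0)$ — this is the place where "general position" of $v_0$ is essential and is the analogue of the remark in the introduction that a genericity condition is needed for vector fields that is not needed in the gradient case. A minor subtlety is that $d_M$ on a disconnected $M$ is defined by the ad hoc convention of the previous subsection; but since for small $\eps_0$ each $q_i$ lies in the same connected component as $p_i$, the relevant distances are the genuine shortest-path distances and the convention does not interfere. Everything else is a routine application of the triangle inequality together with the already-established \Cref{thm:str-stab}, \Cref{lem:top-equiv-induces-iso}, \Cref{prop:general-position-open-dense}, and \Cref{lem:stability-X-Y}.
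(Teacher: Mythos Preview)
Your proposal is correct and follows essentially the same route as the paper: structural stability (\Cref{thm:str-stab}) gives a nearby topological equivalence moving singular points by at most $\eps'$, \Cref{lem:top-equiv-induces-iso} turns this into a basis-preserving isomorphism of Morse complexes, the choice $\eps' < \tfrac{1}{4}\xi(\Sing(v_0))$ guarantees the weight ordering is preserved, and \Cref{lem:stability-X-Y} converts the bound $d_\varphi \le 2\eps'$ into the desired interleaving estimate. The paper's proof is organized as a single $\eps$-$\eps'$ step rather than your two-stage $\eps_0$-then-$\delta$ refinement, and it uses the exact number $n$ of simplified pairs rather than the looser bound $N=|\Sing(v_0)|$, but these are cosmetic differences.
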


\begin{proof}
    In order to show that the map $X\colon \X^1_{gMS+}(M) \to \TEPCh$ is continuous, it suffices to show that for every $v \in \X^1_{gMS+}(M)$ and every $\eps >0$ there exists a neighbourhood $\Nn$ of $v$ in $\X^1_{gMS+}(M)$ such that for all $w \in \Nn$, we have $d_I(X(v),X(w)) \le \eps$.
    Denote by 
    \[
    (a_1,b_1), \quad \ldots, \quad (a_n,b_n),
    \]
    the pairs of singular points that are simplified by applying the construction to $v$, in this order.
    Let $\eps'>0$ be a positive number that satsifies
    \[
    2n\eps' \le \eps, 
    \qquad
    4\eps' \le \xi(\Sing(v)),
    \qquad
    2\eps' \le d_M(p,q) \text{ for all } p\neq q \in \Sing(v).
    \]
    By \Cref{thm:str-stab}, there exists a neighbourhood $\Nn$ of $v$ in $\X_{gMS+}^1(M)$ such that for all $w \in \Nn$ there exists a topological equivalence $\varphi$ between $v$ and $w$ such that $d_M(p,\varphi(p))\le \eps'$ for all $p \in \Sing(v)$. By \Cref{lem:top-equiv-induces-iso}, $\varphi$ induces an isomorphism between the Morse complexes of $v$ and $w$. We now check that the conditions for \Cref{lem:stability-X-Y} are satisfied, i.e.~we want to show that for any $a,b,c,d \in \Sing(v)$, if $d_M(a,b)<d_M(c,d)$, then $d_M(\varphi(a),\varphi(b)) < d_M(\varphi(c),\varphi(d))$.

    To show this, note that, if $d_M(a,b)<d_M(c,d)$, then
    \begin{align*}
        d_M(\varphi(a),\varphi(b)) - d_M(\varphi(c),\varphi(d)) 
        &\le d_M(\varphi(a),a) + d_M(a,b) + d_M(b,\varphi(b)) \\
        &\hspace{10pt} + d_M(c,\varphi(c)) - d_M(c,d) + d_M(d,\varphi(d)) \\
        &< d_M(a,b) - d_M(c,d) + 4\eps' \\
        &\le d_M(a,b) - d_M(c,d) + \xi(\Sing(v))
        \le 0,
    \end{align*}
    from which it follows that $d_M(\varphi(a),\varphi(b)) < d_M(\varphi(c),\varphi(d))$. We have thus shown that the weights of pairs from $\Sing(v)$ are in the same order as the weights of the corresponding pairs from $\Sing(w)$, thus, by \Cref{lem:stability-X-Y}, in the construction of $X(w)$ the pairs
    \[
    (\varphi(a_1),\varphi(b_1)), \quad \ldots, \quad (\varphi(a_r),\varphi(b_r)),
    \]
    get simplified in this order and $d_I(\MC_\bullet(v),\MC_\bullet(w)) \le n d_\varphi(\MC_\bullet(v),\MC_\bullet(w))$. Note that for any singular points $a,b \in \Sing(v)$, we have
    \[ 
    |d(a,b)-d(\varphi(a),\varphi(b))| \le d(a,\varphi(a))+d(b,\varphi(b)) \le 2\eps',
    \]
    thus $d_\varphi(\MC_\bullet(v),\MC_\bullet(w)) \le 2\eps'$. Combining this yields the desired inequality.
    If we replace $X$ by $Y$, the same proof works and when choosing $\eps'$ suffices that $2 \eps' \le \eps$.
\end{proof}

\begin{figure}
     \centering
     \includegraphics[height=6cm]{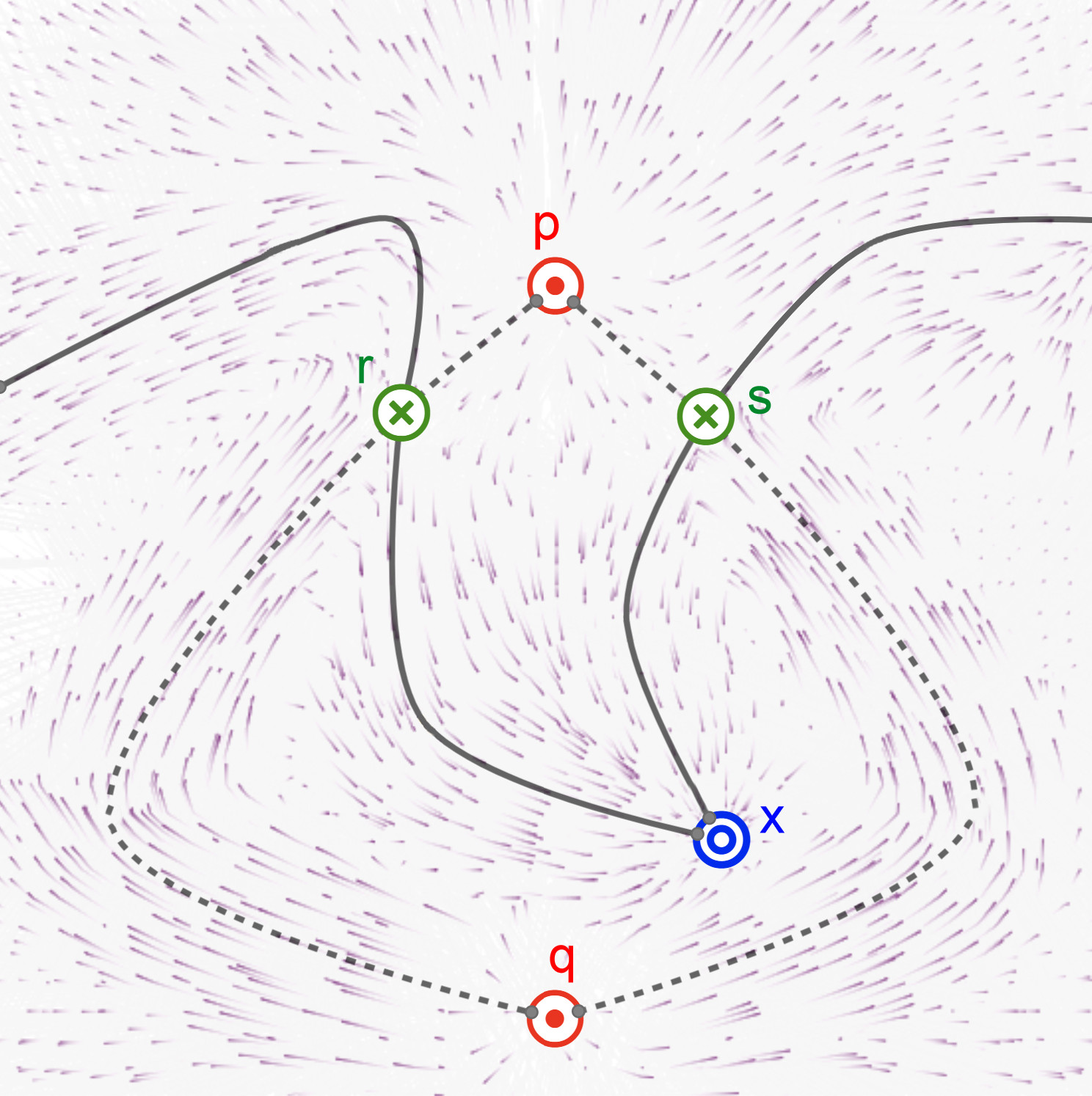}
     \hspace{2cm}
     \includegraphics[height=6cm]{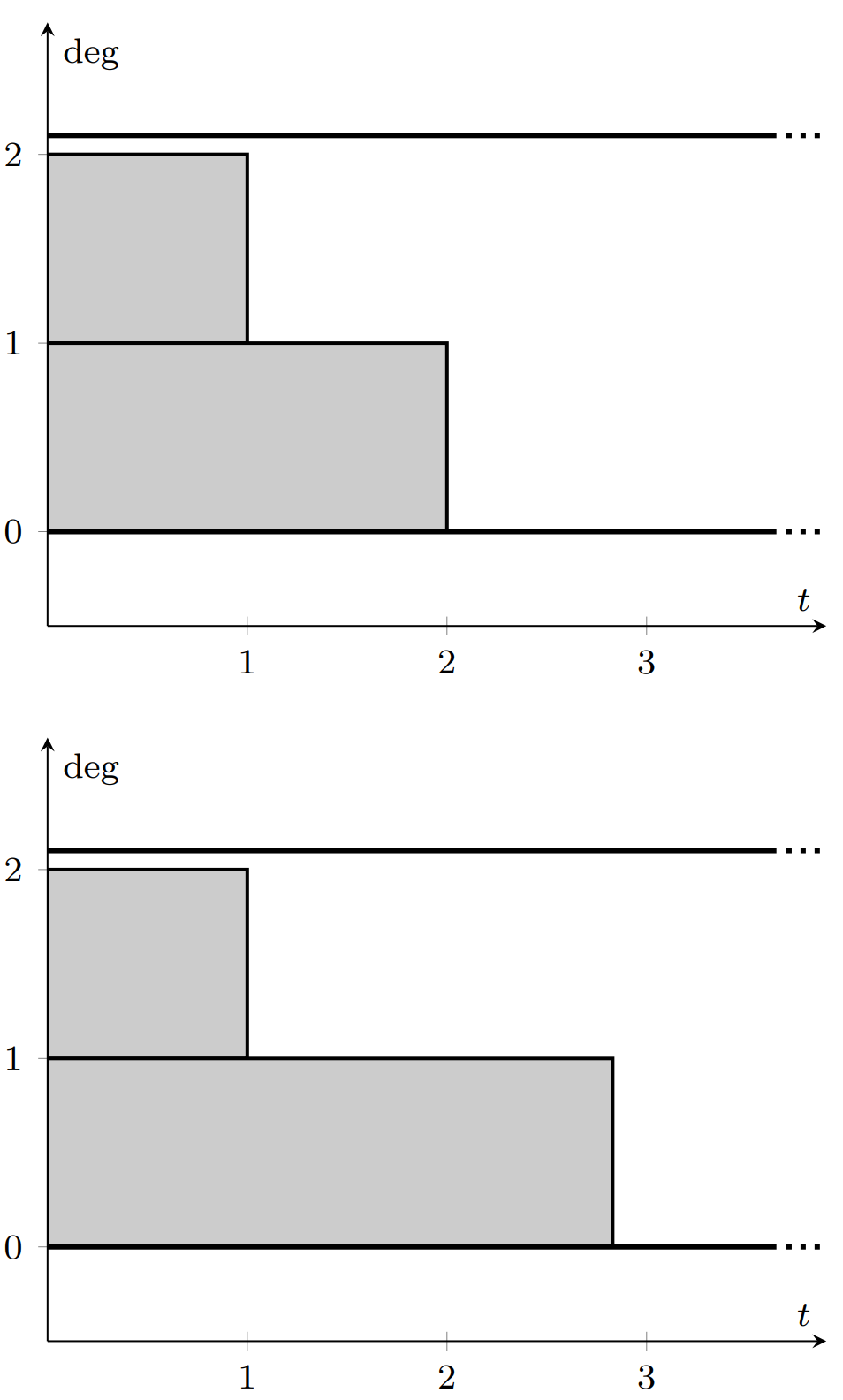}
    \caption{Different orderings of the pairs $(p,r)$ and $(p,s)$ yield different tagged barcodes for this vector field, as explained in \Cref{ex:different-barcodes}.}
    \label{fig:breaking-ties}
\end{figure}

\begin{example}\label{ex:different-barcodes}
    Consider the vector field $v$, defined on the 2-sphere, from \Cref{fig:breaking-ties}.  Again, the whole boundary of the image represents a single point $y$ of $S^2$, which is a sink for $v$. The other singular points of $v$ are two sources $p,q$, two saddle points $r,s$, and a sink $x$. 
    The relevant flow lines for the definition of the Morse complex are visible in the picture. Assume the 2-sphere is equipped with a Riemannian metric such that $d(p,r)=d(p,s)=1$, $d(s,x)=2$, $d(r,x)=\sqrt{8}$, whereas all other pairs of singular points have larger distances. 
    As weights are given by distances, the pairs $(p,r)$ and $(p,s)$ both have the same weight, so our constructions depend on the ordering of these two pairs. 
    If we simplify the pair $(p,r)$ first, then we next simplify the pair $(s,x)$, which has weight $2$. In contrast, if we simplify the pair $(p,s)$ first, then we next simplify the pair $(r,x)$, which has weight $\sqrt{8} \approx 2.8$. 
    This shows that if the weights are not unique, then it can happen that different orderings lead to different tagged barcodes. In \Cref{sec:scalar-fields} we will see that in the case of scalar fields, the situation is different, i.e. we can define a slightly different tagged barcode which is independent of the ordering of the pairs.
\end{example}

\subsection{Combinatorial approximations}\label{sec:comb-approx}

Given a smooth gradient-like Morse-Smale vector field $v$ on a smooth manifold $M$, one can approximate it by a sequence of triangulations of $M$ and combinatorial vector fields on these triangulations. 
First, \Cref{thm:Gallais} asserts that there exists a  triangulation $T$  of $M$ and a combinatorial vector field $V$ on it such that: 
(1) there is a bijection between the set of singular points $p$ of $v$ and the set of critical cells $\sigma_p$ of $V$ such that $p\in \sigma_p$ and $\ind_v(p)=\dim(\sigma_p)$, 
(2) for each pair of critical cells $\sigma_p$ and $\sigma_q$ such that $\dim(\sigma_p)=\dim(\sigma_q)+1$, $V$-paths from hyperfaces of $\sigma_p$ to $\sigma_q$ are in bijection with integral curves of $v$ up to renormalization connecting $q$ to $p$, 
(3) this bijection induces an isomorphism between the Morse complexes  $\MC_\bullet(v)$ and $\overline{\MC}_\bullet(V)$ associated with $v$ and $V$, respectively. 
Then, \Cref{thm:Zhukova} states that if we replace $T$ by its barycentric subdivision $\Delta(T)$, the combinatorial vector field $V$ on $T$ induces a combinatorial vector field $\Delta(V)$ on the barycentric subdivision $\Delta(T)$. By iterating this process we get a combinatorial approximation theorem.

As we did before with the smooth Morse complex, we want to view the combinatorial Morse complex as a weighted based chain complex. To do this, we assume that some extra structure is present on the simplicial complex.

\begin{definition}
    A \textbf{metric simplicial complex} is a simplicial complex equipped with a metric on its set of cells. A metric simplicial complex is called \textbf{generic}, if all the distances are pairwise different. 
\end{definition}

\begin{remark}
    Given a simplicial complex $K$, the combinatorial Morse complex can be viewed as an assignment $\overline{\X}_g(K) \to \operatorname{b}\Ch$, where the bases on $\overline{\MC}_\bullet(V)$ are given by the canonical bases of critical cells.

    If $K$ is moreover a metric simplicial complex, then we view the combinatorial Morse complex as a map $\overline{\X}_g(K) \to \wbCh$
    . The bases of $\overline{\MC}_\bullet(V)$ is given as before by the critical cells and the weights are taken to be the distances between those cells. Thus in that case we can assign a factored chain complex to the combinatorial Morse complex, according to either \Cref{def:assign-parchain-to-vectorfield} or \Cref{def:assign-parchain-to-vectorfield2}. We again shorten our notation by writing $X(V):= X(\overline{\MC}_\bullet(V))$ and $Y(V):= Y(\overline{\MC}_\bullet(V))$ for any $V \in \overline{\X}_g(K)$.
\end{remark}

If $K$ is a simplicial complex and $\sigma$ is a $k$-cell of $K$, then $|\sigma| \subseteq |K|$ has a well-defined \textbf{barycenter}. We denote this point by $b(\sigma) \subseteq |\sigma|$.

Using the barycenters, every triangulated Riemannian manifold induces a metric simplicial complex in the following way: If $(M,M',\phi)$ is a triangulated Riemannian manifold, then we have the simplicial complex $M'$ and for the distance on the cells of $M'$ we take the distance between the barycenters, i.e.~we define $d(\sigma,\tau) := d_M(\phi(b(\sigma)),\phi(b(\tau)))$.

Since there is a canonical homeomorphism between $|K|$ and $|\Delta(K)|$, we will identify these two spaces. Therefore, if $(M,M',\phi)$ is a triangulated manifold, then so is $(M,\Delta(M'),\phi)$.

\begin{definition}
    Let $M$ and $v$ be as in \Cref{thm:Gallais}. A \textbf{triangulation of $v$} is a triple $(M',\phi,V)$ such that 
    \begin{itemize}
        \item $(M',\phi)$ is a triangulation of $M$,
        \item $V \in \overline{\X}_g(M')$,
        \item $V$ satisfies the conditions $(\ref{item:gallais-1})$, $(\ref{item:gallais-2})$, $(\ref{item:gallais-3})$ from \Cref{thm:Gallais}.
    \end{itemize}
\end{definition}

\begin{lemma}\label{lem:subdiv-is-triang}
    Let $M$ and $v$ be as in \Cref{thm:Gallais} and let $(M',\phi,V)$ be a triangulation of $v$. Then it is possible to choose orderings of the vertices of the critical cells of $V$ in such a way that also $(\Delta(M'),\phi,\Delta(V))$ is a triangulation of $v$.
\end{lemma}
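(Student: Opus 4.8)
The plan is to verify directly the three defining conditions of a triangulation of $v$ for the triple $(\Delta(M'),\phi,\Delta(V))$, transporting each of them from $(M',\phi,V)$ via \Cref{thm:Zhukova}. That $(\Delta(M'),\phi)$ is again a triangulation of $M$ is immediate from the identification $|M'|=|\Delta(M')|$ recorded above, and $\Delta(V)\in\overline{\X}_g(\Delta(M'))$ is the first assertion of \Cref{thm:Zhukova}. So the only real content is to choose the vertex orderings of the critical cells of $V$ so that $\Delta(V)$ satisfies conditions $(\ref{item:gallais-1})$, $(\ref{item:gallais-2})$, $(\ref{item:gallais-3})$ of \Cref{thm:Gallais}.

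First I would fix the orderings. For a critical cell $\sigma$ of $V$, let $p\in\Sing(v)$ be the singular point it corresponds to under $(\ref{item:gallais-1})$ for $V$, so that $\phi^{-1}(p)\in|\sigma|$ and $\dim(\sigma)=\ind_v(p)$. The closed top-dimensional cells of the barycentric subdivision of $\sigma$ cover $|\sigma|$, hence $\phi^{-1}(p)$ lies in at least one of them; pick such a cell $\sigma'$. A top-dimensional cell of $\Delta(\sigma)$ lying in $\sigma$ is the same datum as a maximal flag of faces of $\sigma$, equivalently an enumeration of the vertices of $\sigma$, so this choice determines the ordering on which $\Delta(V)$ depends, and thereby fixes $\Delta(V)$.

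Next I would check the three conditions. For $(\ref{item:gallais-1})$: by \Cref{thm:Zhukova}$(\ref{item:bijection-crit-cells})$ the critical cells of $\Delta(V)$ are exactly the chosen cells $\sigma'$, and $\sigma\mapsto\sigma'$ is a dimension-preserving bijection $\Crit(V)\to\Crit(\Delta(V))$; composing it with the bijection $\Sing(v)\to\Crit(V)$ from $(\ref{item:gallais-1})$ for $V$ gives index-preserving bijections $\Sing_k(v)\to\Crit_k(\Delta(V))$, and $p\in\phi(|\sigma'|)$ holds by construction. For $(\ref{item:gallais-2})$: compose the bijection between flow lines of $v$ from $p$ to $q$ and $V$-paths from a hyperface of the critical cell $\tau$ to the critical cell $\sigma$ (condition $(\ref{item:gallais-2})$ for $V$) with the one-to-one correspondence between the gradient paths of $V$ and those of $\Delta(V)$ supplied by \Cref{thm:Zhukova}$(\ref{item:corr-gradient-paths})$, which respects the critical-cell bijection of $(\ref{item:bijection-crit-cells})$ and hence matches $V$-paths from a hyperface of $\tau$ to $\sigma$ with $\Delta(V)$-paths from a hyperface of $\tau'$ to $\sigma'$. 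For $(\ref{item:gallais-3})$: compose the isomorphism $\MC_\bullet(v)\cong\overline{\MC}_\bullet(V)$ of $(\ref{item:gallais-3})$ for $V$ with the isomorphism $\overline{\MC}_\bullet(V)\cong\overline{\MC}_\bullet(\Delta(V))$ noted right after \Cref{thm:Zhukova}; since each of these two isomorphisms is the linear map induced by the respective bijection of critical cells / singular points, the composite is induced by the base bijection from $(\ref{item:gallais-1})$ for $\Delta(V)$, as required.

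The step I expect to require the most care is the first one: confirming that a single consistent choice of vertex orderings can simultaneously realize, for every critical cell $\sigma$, a top cell of $\Delta(\sigma)$ containing $\phi^{-1}(p)$, and that this is indeed the same kind of datum that \Cref{thm:Zhukova} takes as input. Once that bookkeeping is in place, conditions $(\ref{item:gallais-2})$ and $(\ref{item:gallais-3})$ are just compositions of correspondences already provided by \Cref{thm:Gallais} and \Cref{thm:Zhukova}, requiring only the routine checks that these compositions respect dimensions, endpoints, and linear structure.
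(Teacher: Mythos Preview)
Your proposal is correct and follows essentially the same approach as the paper: for each critical cell $\sigma$ of $V$ with associated singular point $p$, choose a top-dimensional cell $\sigma'$ of $\Delta(\sigma)$ whose realization contains $\phi^{-1}(p)$, and use this choice to define $\Delta(V)$. The paper's proof is terser---it simply asserts that with this choice $\Delta(V)$ again satisfies all the conditions of \Cref{thm:Gallais}---whereas you spell out the verification of $(\ref{item:gallais-1})$, $(\ref{item:gallais-2})$, $(\ref{item:gallais-3})$ explicitly via \Cref{thm:Zhukova}, but the underlying argument is the same.
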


\begin{proof}
    Since $(M',\phi,V)$ is a triangulation of $v$, for every singular point $p$ of $v$ there exists a critical cell $\sigma$ of $V$ such that $p \in \sigma$. Since the $k$-cells of $\Delta(M')$ cover the $k$-cells of $M'$, there exists a cell $\sigma'$ in $\Delta(M')$, with $|\sigma'|\subseteq |\sigma|$, such that $p \in \sigma'$. We can choose this cell $\sigma'$ (which may not be unique) for the definition of $\Delta(V)$. Doing so for every singular point of $v$ implies, by \Cref{thm:Zhukova}, that $\Delta(V)$ again satisfies all the conditions from \Cref{thm:Gallais} and so $(\Delta(M'),\phi,\Delta(V))$ is a triangulation of $v$.
\end{proof}

Now we are ready to state and prove our combinatorial approximation theorem.

\begin{theorem}\label{thm:comb-approx}
    Let $M$ be an oriented Riemannian manifold and let $v \in \X_{gMS+}(M)$. Let $(M',\phi,V)$ be a triangulation of $v$. Then for every $\eps>0$ there exists $N \in \N$ such that for all $n\ge N$ we have
    \[
    d_I \left( X(v), X(\Delta^n(V)) \right) < \eps.
    \]
\end{theorem}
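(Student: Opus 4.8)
The strategy is to reduce the statement to the stability result \Cref{lem:stability-X-Y} by showing that for $n$ large enough, the combinatorial Morse complex $\overline{\MC}_\bullet(\Delta^n(V))$, viewed as a generic weighted based chain complex, is isomorphic (as a based chain complex) to $\MC_\bullet(v)$ via a bijection of bases that respects the ordering of the weights, and that the weights differ by less than any prescribed amount. First I would use \Cref{thm:Gallais} together with \Cref{lem:subdiv-is-triang} to see that each $(\Delta^n(M'),\phi,\Delta^n(V))$ is again a triangulation of $v$; hence by \Cref{thm:Gallais}$(\ref{item:gallais-3})$ (combined with the isomorphism $\overline{\MC}_\bullet(V)\cong\overline{\MC}_\bullet(\Delta(V))$ noted after \Cref{thm:Zhukova}) there is, for every $n$, a canonical bijection $\varphi_n$ between the singular points of $v$ and the critical cells of $\Delta^n(V)$ which respects indices/degrees and induces an isomorphism of based chain complexes $\MC_\bullet(v)\cong\overline{\MC}_\bullet(\Delta^n(V))$. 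Moreover, by the barycenter property in \Cref{thm:Gallais}$(\ref{item:gallais-1})$, if $p\in\Sing(v)$ corresponds to the critical cell $\sigma_p^{(n)}$ of $\Delta^n(V)$, then $p\in\phi(|\sigma_p^{(n)}|)$, so the barycenter $\phi(b(\sigma_p^{(n)}))$ lies within $\mathrm{diam}\bigl(\phi(|\sigma_p^{(n)}|)\bigr)$ of $p$ in $(M,d_M)$.

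Next I would control the mesh. Since $M$ is compact and the triangulations $\Delta^n(M')$ have mesh tending to $0$ as $n\to\infty$ (barycentric subdivision shrinks simplices by a factor $\tfrac{\dim M}{\dim M +1}<1$, and $\phi$ is uniformly continuous on the compact space $|M'|$), for every $\delta>0$ there is $N_0$ so that for all $n\ge N_0$ and all critical cells $\sigma$ of $\Delta^n(V)$ we have $\mathrm{diam}\bigl(\phi(|\sigma|)\bigr)<\delta$. Consequently $d_M\bigl(p,\phi(b(\sigma_p^{(n)}))\bigr)<\delta$ for every singular point $p$ of $v$. The weight that \Cref{def:assign-parchain-to-vectorfield} assigns in $\overline{\MC}_\bullet(\Delta^n(V))$ to a pair of critical cells $(\sigma_p^{(n)},\sigma_q^{(n)})$ is, by the definition of the metric simplicial complex induced by a triangulated Riemannian manifold, $d_M\bigl(\phi(b(\sigma_p^{(n)})),\phi(b(\sigma_q^{(n)}))\bigr)$, and the triangle inequality gives
\[
\Bigl| d_M(p,q) - d_M\bigl(\phi(b(\sigma_p^{(n)})),\phi(b(\sigma_q^{(n)}))\bigr) \Bigr| \le d_M\bigl(p,\phi(b(\sigma_p^{(n)}))\bigr) + d_M\bigl(q,\phi(b(\sigma_q^{(n)}))\bigr) < 2\delta.
\]
So $d_{\varphi_n}\bigl(\MC_\bullet(v),\overline{\MC}_\bullet(\Delta^n(V))\bigr)<2\delta$ for all $n\ge N_0$.

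Now I would invoke genericity: since $v\in\X_{gMS+}(M)$, the quantity $\xi(\Sing(v))>0$, so choosing $\delta<\tfrac14\xi(\Sing(v))$ (and $\delta$ smaller than half the minimal pairwise distance between distinct singular points, so that all perturbed weights stay nonzero and distinct) forces, exactly as in the computation in the proof of \Cref{thm:local-stability}, that whenever $d_M(a,b)<d_M(c,d)$ for distinct pairs of singular points we also have $d_M\bigl(\phi(b(\sigma_a^{(n)})),\phi(b(\sigma_b^{(n)}))\bigr)<d_M\bigl(\phi(b(\sigma_c^{(n)})),\phi(b(\sigma_d^{(n)}))\bigr)$; in particular $\overline{\MC}_\bullet(\Delta^n(V))$ is generic and $\varphi_n$ respects the ordering of weights. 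Thus the hypotheses of \Cref{lem:stability-X-Y} are met, and with $r$ the number of simplification steps (which equals $\tfrac12\bigl(\dim\MC_\bullet(v)-\sum_k\beta_k\bigr)$, independent of $n$), we get
\[
d_I\bigl(X(v),X(\Delta^n(V))\bigr) \le r\, d_{\varphi_n}\bigl(\MC_\bullet(v),\overline{\MC}_\bullet(\Delta^n(V))\bigr) < 2r\delta.
\]
Given $\eps>0$, choosing $\delta<\min\bigl\{\tfrac{\eps}{2r},\ \tfrac14\xi(\Sing(v)),\ \tfrac12\min_{p\ne q}d_M(p,q)\bigr\}$ and the corresponding $N:=N_0$ yields $d_I\bigl(X(v),X(\Delta^n(V))\bigr)<\eps$ for all $n\ge N$, as claimed.

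\textbf{Main obstacle.} The routine parts are the triangle-inequality weight estimate and the reuse of the ordering argument from \Cref{thm:local-stability}. The one point requiring genuine care is the mesh-shrinking claim: one must confirm that iterated barycentric subdivision drives the diameters of the \emph{geometric realizations in $M$} (not merely the combinatorial sizes) to zero uniformly, which uses compactness of $M$ and uniform continuity of $\phi$, and one must match this against the \emph{correct} notion of weight, namely the distance between \emph{barycenters} of critical cells rather than the cells as point sets — these agree up to the mesh, which is exactly what the estimate above exploits. A secondary subtlety is that $\varphi_n$ must simultaneously induce a based-chain-complex isomorphism and respect weight order; the former is guaranteed by Gallais–Zhukova, the latter by the genericity slack $\xi(\Sing(v))$.
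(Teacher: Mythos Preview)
Your proposal is correct and follows essentially the same route as the paper: iterate \Cref{lem:subdiv-is-triang} to keep $(\Delta^n(M'),\phi,\Delta^n(V))$ a triangulation of $v$, use that the critical cell containing each $p\in\Sing(v)$ shrinks so its barycenter converges to $p$, and then feed the resulting weight estimate into \Cref{lem:stability-X-Y} with the genericity slack $\xi(\Sing(v))$ exactly as in the proof of \Cref{thm:local-stability}. In fact you spell out several steps (the mesh-shrinking via the $\tfrac{\dim M}{\dim M+1}$ factor and uniform continuity of $\phi$, the explicit triangle-inequality bound, and the explicit choice of $\delta$) that the paper leaves implicit by saying ``the conditions for such $\eps'$ are analogous to the conditions in the proof of \Cref{thm:local-stability}.''
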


The statement of the theorem is also true when we replace $X$ with $Y$. The proof is analogous. Also, when writing $\Delta^n(V)$, we are hiding from the notation the dependence on some choices. The precise statement would be that there exists a sequence of $N$ choices, so that for any choice we do afterwards, the statement from the theorem holds.

\begin{proof}
    By iterated application of \Cref{lem:subdiv-is-triang}, $(\Delta^n(M'),\phi,\Delta^n(V))$ is a triangulation of $v$ for all $n$, given the right choices in the definition of $\Delta(V)$, $\Delta^2(V)$, etc. Since the cells in $\Delta^n(V)$ become smaller as $n$ gets larger, the barycenters of $\sigma^{(n)}_p$ converge to $p$ as $n$ goes to infinity, where by $\sigma^{(n)}_p$ we denote the barycenter of the critical cell in $\Delta^n(M')$ that contains $p$. If we choose $N$ large enough such that $\phi(|\sigma_p^{(N)}|) \subseteq B_{\eps'}(p)$, where $\eps'>0$ is a small enough positive number such that the conditions of \Cref{lem:stability-X-Y} are satisfied for $\MC_\bullet(V)$ and $\overline{\MC}_\bullet(\Delta^N(V))$. The conditions for such $\eps'$ are analogous to the conditions in the proof of \Cref{thm:local-stability}.
\end{proof}

\subsection{The case of scalar fields and connections to persistent homology}\label{sec:scalar-fields}

We now turn our attention to scalar fields, i.e. functions, on a Riemannian manifold. As it is often done, we restrict ourselves to the case of Morse-Smale functions. The gradient of such a function is a gradient-like Morse-Smale vector field. The goal of this section is twofold. First, we relate the classical persistence barcode of the function to the tagged barcode of its gradient field. Secondly, we show that, in contrast to what happens for gradient-like vector fields (see \Cref{ex:different-barcodes}), for gradient fields we do not need further genericity conditions.

Given a Morse-Smale function, we can endow the Morse complex of its gradient with a different set of weights. Instead of distances between singular points we can use the difference in function values. For convenience, we will assume that $f$ takes only non-negative values. Since we consider only closed manifolds, this can always be achieved by adding a constant. 

We now introduce a modification of \Cref{def:weighted-based-chain-complex} for scalar fields. This is analogous to  \cite{Edelsbrunner2023depth}.

\begin{definition}
    A \textbf{based chain complex with a filter} is a based chain complex $(C_\bullet,\B_\bullet)$ together with a \textbf{monotone} function $f\colon \bigcup_k \B_k \to [0,\infty)$, that is, for all $a \in \B_k$, $b \in \B_{k-1}$, if $\langle \partial a,b \rangle \neq 0$, then $f(b)<f(a)$.
\end{definition}

Therefore, if we are given a Morse-Smale function $f$, then we can view the Morse complex $\MC_\bullet(-\nabla f)$ as both a based chain complex with a filter, where the filter is given by the function values of $f$ on the singular points,  and a weighted based chain complex, where the weights are equal to $|f(a)-f(b)|$.
 
Recall that a filtered chain complex is a parametrized chain complex which is tame and where all the internal chain maps are monomorphisms. A  based chain complex with a filter $(C_\bullet,\B_\bullet,f)$ induces a filtered chain complex $F(C_\bullet,\B_\bullet,f)\colon [0,\infty) \to \Ch$, $t \mapsto C^{f\le t}_\bullet$, in the following way. The chain complex $C^{f\le t}_\bullet$ consists of the vector spaces $C^{f\le t}_k := \Span(\{b \in \B_k \mid f(b)\le t \}) \subseteq C_k$. The differential $\partial\colon C^{f\le t}_k \to C^{f\le t}_{k-1}$ is given by the restriction of the differential in $C_\bullet$. This is well-defined because the function $f$ is monotone.

Applying homology then yields persistence modules $H_n(F(C_\bullet,\B_\bullet,f))$, one for each $n$, as explained in \Cref{sec:par-vec-ind-from-par-chain}. These are parametrized vector spaces and hence have persistence barcodes due to \Cref{thm:str-thm-vect}.

On the other hand, given a filtered based chain complex $(C_\bullet,\B_\bullet,f)$, this induces a weighted based chain complex $(C_\bullet,\B_\bullet,\operatorname{w})$, where the weights are given by $\operatorname{w}(a,b):=|f(a)-f(b)|$. Because it has different weights than in the previous sections, the factored chain complexes resulting from applying either \Cref{def:assign-parchain-to-vectorfield} or \Cref{def:assign-parchain-to-vectorfield2} are hence also different. However, if we apply them in this setting, the result is related to the persistence barcode of $f$. More precisely, we will show that it is possible to obtain the tagged barcode of $Y(C_\bullet,\B_\bullet,\operatorname{w})$ by mapping finite intervals $[s,t)$ from $\Barc(H_n(F(C_\bullet,\B_\bullet,f)))$ to $[0,t-s,t-s)$ in $\tBarc_{n+1}(Y(C_\bullet,\B_\bullet,\operatorname{w}))$ and infinite intervals $[s,\infty)$ to $[0,0,\infty)$ in $\tBarc_n(Y(C_\bullet,\B_\bullet,\operatorname{w}))$. The proof strategy is by induction over the number of pairs that get simplified in our construction and show that if we simplify a pair $(a,b) \in \B_n\times \B_{n-1}$, then there is an interval present in $\Barc(H_{n-1}(F(C_\bullet,\B_\bullet,f)))$ of the form $[f(b),f(a))$. Moreover, the persistence barcode resulting from that simplification is equal to the persistence barcode of the original filtered chain complex with that interval removed.

Formally, we want to define the dashed map in the following diagram that makes it commute:

\begin{center}
\begin{tikzcd}
    (C_\bullet,\B_\bullet,f) \ar[r,mapsto,"\operatorname{w}(\argdot{,}\argdot):=|f(\argdot)-f(\argdot)|"] \ar[d,mapsto,"F"] &(C_\bullet,\B_\bullet,\operatorname{w}) \ar[d,mapsto,"Y"] \\
    F((C_\bullet,\B_\bullet,f)) \ar[d,mapsto,"\Barc"] &Y((C_\bullet,\B_\bullet,\operatorname{w})) \ar[d,mapsto,"\tBarc"] \\
    \displaystyle \Barc \left( \bigoplus_{n \in \N} H_n(F(C_\bullet,\B_\bullet,f)) \right) = \bigcup_{n\in \N} \Barc(H_n(F(C_\bullet,\B_\bullet,f)) \ar[r,mapsto,dashed] & \displaystyle \bigcup_{n\in \N} \tBarc_n(Y(C_\bullet,\B_\bullet,\operatorname{w}))
\end{tikzcd}
\end{center}

This will be done in \Cref{thm:persistence-and-tagged-barcode-comparison}. In order to state this result, we use \textbf{filtered interval spheres} \cite{CGJL}. This is the monomorphic analogue to the interval functors in $\TEPCh$.
    Recall that $\iota^n\colon S^n \hookrightarrow D^{n+1}$ denotes the inclusion of chain complexes. Given $n \in \N$, $0\le s < \infty$ and $s \le t \le \infty$, we define $\II^n[s,t) \in \TPCh$ by
    \[
    (\II^n[s,t))^r = \begin{cases}
        0, \text{ if } 0\le r <s,\\
        S^n, \text{ if } s \le r <t,\\
        D^{n+1}, \text{ if } t\le r,
    \end{cases}
    \quad \text{and} \quad 
    (\II^n[s,t))^{q\le r} = \begin{cases}
        0, \text{ if } q <s,\\
        \one_{S^n}, \text{ if } s\le q \le r <t,\\
        \iota^n, \text{ if } s\le q <t \le r,\\
        \one_{D^{n+1}}, \text{ if } t \le q.
    \end{cases}
    \] 

The following lemma is based on Proposition 3.2 in \cite{CGJL}. In the proof, we explain  the necessary adjustments to apply that result.

\begin{lemma}\label{lem:simplify-pair-ab}
    Let $(C_\bullet,\B_\bullet,f)$ be a  based chain complex with a filter. Let $(a,b) \in \B_n\times \B_{n-1}$, such that $\langle \partial a,b \rangle \neq 0$ and
    \begin{enumerate}[(i)]
        \item\label{item:split-cond-1} $f(b) = \max \{f(b') \mid \langle \partial a,b' \rangle \neq 0 \}$,
        \item\label{item:split-cond-2} $f(a) = \min \{f(a') \mid \langle \partial a',b \rangle \neq 0 \}$.
    \end{enumerate}
    Denote by $(\overline{C}_\bullet,\overline{\B}_\bullet)$ the based chain complex resulting from applying \Cref{lem:chain-complex-simplification} to the pair $(a,b)$. Define the filter $\overline{f}$ on $(\overline{C}_\bullet,\overline{\B}_\bullet)$ by $\overline{f}([c]):=f(c)$ for $c \in \B_i$. Then we have an isomorphism of filtered chain complexes
    \[
    F(C_\bullet,\B_\bullet,f) \cong F(\overline{C}_\bullet,\overline{\B}_\bullet,\overline{f}) \oplus \II^{n-1}[f(b),f(a)).
    \]
\end{lemma}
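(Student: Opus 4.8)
\textbf{Proof strategy for \Cref{lem:simplify-pair-ab}.}

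The plan is to explicitly construct an isomorphism of filtered chain complexes by exhibiting, at each level $t$, a direct sum decomposition of $C^{f\le t}_\bullet$ into a subcomplex isomorphic to $\overline{C}^{\overline{f}\le t}_\bullet$ and a subcomplex isomorphic to $(\II^{n-1}[f(b),f(a)))^t$, compatible with the internal inclusions. First I would recall from \Cref{lem:chain-complex-simplification} that $\overline{C}_i = C_i$ for $i\neq n,n-1$, while $\overline{C}_n = C_n/\Span(a)$ and $\overline{C}_{n-1}=C_{n-1}/\Span(\partial a)$, with bases $\overline{\B}_n = \{[a'] : a'\in \B_n\setminus\{a\}\}$ and $\overline{\B}_{n-1}=\{[b'] : b'\in\B_{n-1}\setminus\{b\}\}$. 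The key point is to lift these quotient bases back into $C_\bullet$ in a way that is compatible with the filter. Using the Gaussian-elimination description in parts \ref{item:chain-complex-simplification-2}--\ref{item:chain-complex-simplification-4} of \Cref{lem:chain-complex-simplification}, I would define, for $a'\in\B_n\setminus\{a\}$,
\[
\widetilde{a'} := a' - \langle\partial a', b\rangle\,\langle\partial a, b\rangle^{-1}\, a,
\]
and for $b'\in\B_{n-1}\setminus\{b\}$,
\[
\widetilde{b'} := b' - \langle\partial a, b'\rangle\,\langle\partial a, b\rangle^{-1}\,\partial a.
\]
These are precisely the representatives for which the boundary matrix of the span of the $\widetilde{a'}$, $\widetilde{b'}$ (together with the complementary degrees, untouched) reproduces $\overline{M}_\bullet$, so the subcomplex $\overline{D}_\bullet \subseteq C_\bullet$ they generate is isomorphic to $\overline{C}_\bullet$ as a chain complex, and the complementary subcomplex generated by $a$ and $\partial a$ is the $n$-disk $D^n$.

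The heart of the argument, and what I expect to be the main obstacle, is to check that this splitting is \emph{filtered}, i.e.\ that it restricts correctly to every sublevel set $C^{f\le t}_\bullet$. This is exactly where hypotheses \ref{item:split-cond-1} and \ref{item:split-cond-2} enter. For the lift $\widetilde{a'}$: the correction term involves $a$, and $a$ is only ``allowed'' to appear once $t\ge f(a)$; condition \ref{item:split-cond-2} guarantees that $\langle\partial a',b\rangle\neq 0$ forces $f(a')\ge f(a)$, so whenever $a'$ is present in the sublevel set at level $t$ (that is $f(a')\le t$), also $f(a)\le t$, hence $\widetilde{a'}\in C^{f\le t}_n$. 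Similarly, for $\widetilde{b'}$ the correction term is $\partial a = \sum_{b''}\langle\partial a,b''\rangle b''$, whose basis elements $b''$ all satisfy $f(b'')\le f(b)$ by \ref{item:split-cond-1}; and $\langle\partial a,b'\rangle\neq 0$ forces $f(b')\le f(b)$ as well. Since $b = b'$ is excluded, we need $f(b')\le t \Rightarrow f(b'')\le t$ for the $b''$ appearing, which follows because $f(b'')\le f(b)$ and... here one must be slightly careful: we need that $b$ itself does not sneak in below $f(a)$. So I would organize the level-wise comparison into three ranges: $t<f(b)$, $f(b)\le t<f(a)$, and $t\ge f(a)$, and in each range identify $C^{f\le t}_\bullet$ as the direct sum of the sublevel set of the $\overline{D}_\bullet$ part and the corresponding piece of $\II^{n-1}[f(b),f(a))$ — which is $0$ for $t<f(b)$, $S^{n-1}$ (spanned by $[\partial a]$, i.e.\ the class of the cycle $\partial a$) for $f(b)\le t<f(a)$, and $D^n$ (spanned by $a,\partial a$) for $t\ge f(a)$. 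Monotonicity of $f$ (no boundary of $a$ has value $\ge f(a)$, which is just $f(b)<f(a)$ from the filter hypothesis plus \ref{item:split-cond-1}) ensures $\partial a$ is genuinely a cycle living at level $f(b)$ and that $f(b)<f(a)$, so the interval $[f(b),f(a))$ is nonempty.

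Finally I would check naturality: the internal inclusions $C^{f\le s}_\bullet \hookrightarrow C^{f\le t}_\bullet$ for $s\le t$ respect the constructed splitting by inspection (the lifts $\widetilde{a'},\widetilde{b'}$ and the generators $a,\partial a$ are fixed elements of $C_\bullet$, independent of $t$), so the degreewise isomorphisms assemble into an isomorphism of parametrized chain complexes. Tameness of both sides is immediate since they are finite-dimensional and the filter takes finitely many values, and monomorphicity is clear. This yields
\[
F(C_\bullet,\B_\bullet,f) \cong F(\overline{C}_\bullet,\overline{\B}_\bullet,\overline{f}) \oplus \II^{n-1}[f(b),f(a)),
\]
as claimed. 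To connect with \cite[Proposition 3.2]{CGJL}: that result is presumably stated for a single reduction pair in a filtered complex over a slightly different indexing poset or with integer filtration values; the only adjustments needed are to work over our poset $[0,\infty)$, to use the half-open-interval convention for $\II^{n-1}$ matching \Cref{def:interval}, and to track that the filter value of a reduced generator is unchanged, which is how $\overline{f}$ was defined.
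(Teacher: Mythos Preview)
Your overall strategy---constructing explicit lifts of the quotient bases to a complementary subcomplex and then checking that the splitting is filtered---is sound and is essentially a direct proof of what the paper obtains by invoking \cite[Proposition~3.2]{CGJL}. However, your formula for $\widetilde{b'}$ is wrong, and this causes two failures.

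First, with your choice $\widetilde{b'}=b'-\langle\partial a,b'\rangle\langle\partial a,b\rangle^{-1}\partial a$, the subspace $\Span\{\widetilde{b'}:b'\neq b\}$ does \emph{not} in general contain $\partial\widetilde{a'}$. A direct computation shows $\partial\widetilde{a'}$ has zero $b$-component, hence lies in $\Span\{b':b'\neq b\}$; but your $\widetilde{b'}$ has $b$-component $-\langle\partial a,b'\rangle$, which is nonzero whenever $b'$ appears in $\partial a$, so the two subspaces differ. Thus your $\overline{D}_\bullet$ is not a subcomplex. Second, as you yourself flag (``here one must be slightly careful''), the filtration compatibility fails: if $b'\neq b$ appears in $\partial a$ with $f(b')<f(b'')<f(b)$ for some other $b''$ in $\partial a$, then for $f(b')\le t<f(b'')$ your $\widetilde{b'}$ is not in $C^{f\le t}_{n-1}$.

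The fix is simply to take $\widetilde{b'}:=b'$ (no correction in degree $n-1$) and keep your $\widetilde{a'}$ in degree $n$. Then $\overline{D}_{n-1}=\Span\{b':b'\neq b\}$ is a complement to $\Span(\partial a)$ because $\langle\partial a,b\rangle\neq 0$; it contains $\partial\widetilde{a'}$ as noted; and the boundary matrices reproduce $\overline{M}_\bullet$ exactly as in \Cref{lem:chain-complex-simplification}. The filtered check in degree $n-1$ becomes trivial, and in degree $n$ condition~\ref{item:split-cond-2} gives exactly what you wrote. With this correction the rest of your argument goes through. By contrast, the paper's proof outsources the splitting to \cite{CGJL} and spends its effort on verifying that $\overline{f}$ is monotone on $(\overline{C}_\bullet,\overline{\B}_\bullet)$---a point your approach handles implicitly, since once $\overline{D}_\bullet$ is a filtered subcomplex the induced filter on the quotient description is automatically monotone.
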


\begin{proof}
    Proposition 3.2 of \cite{CGJL} is formulated in terms of a filtered chain complex $X$ and a pair of generators $(x_i,x_j)$ satisfying the so-called split conditions. The conclusion states that $X \cong \II^{n-1}[s,e) \oplus X'$, where $n$ is the degree of $x_j$, $s$ is the entrance time of $x_j$, $e$ is the entrance time of $x_i$ and $X'$ is another filtered chain complex. Note that in \cite{CGJL}, the pair of generators $(x_i,x_j)$ is written in increasing degree, whereas we write pairs in decreasing degree. We apply the result to $X=F(C_\bullet,\B_\bullet,f)$ and the pair of generators $(b,a)$, noting that the split conditions from \cite{CGJL} are equivalent to our conditions $\langle \partial a,b \rangle \neq 0$ and $(\ref{item:split-cond-1}-\ref{item:split-cond-2})$.
    
    It remains to show that $\overline{f}$ indeed defines a monotone function on $(\overline{C}_\bullet,\overline{\B}_\bullet)$ and that the filtered chain complex $F(\overline{C}_\bullet,\overline{\B}_\bullet,\overline{f})$ agrees with $X'$ from \cite{CGJL}. To check the former, consider a pair $(c,d) \in \B_k \times \B_{k-1}$ with $\langle\partial^{\overline{C}} [c],[d]\rangle \neq 0$ with respect to the basis $\overline{\B}_{k-1}$. If also $\langle \partial c,d \rangle \neq 0$ with respect to $\B_{k-1}$, then we have $\overline{f}([c])-\overline{f}([d])=f(c)-f(d)>0$, so $\overline{f}$ satisfies the monotonicity condition for the pair $([c],[d])$. Note that for $k\neq n$, $\langle\partial^{\overline{C}} [c],[d]\rangle \neq 0$ implies that already $\langle \partial c,d \rangle \neq 0$ with respect to the basis $\B_{k-1}$ (in the case $k=n-1$ this uses the fact that we quotient out $\partial a$, which satisfies $\partial(\partial a)=0$). The only case remaining is thus $k=n$ and $d \notin \partial c$. This implies that $\langle \partial c,b \rangle \neq 0$ and $\langle\partial a,d\rangle \neq 0$ (see \Cref{lem:chain-complex-simplification} $(\ref{item:chain-complex-simplification-3})$). Thus, by $(\ref{item:split-cond-1})$ and $(\ref{item:split-cond-2})$, we have $f(c)\ge f(a)$ and $f(d)\le f(b)$, and thus
    \[
    \overline{f}([c]) - \overline{f}([d]) = f(c) - f(d) \ge f(a) - f(b) > 0,
    \]
    so $\overline{f}$ is monotone. The fact that $F(\overline{C}_\bullet,\overline{\B}_\bullet,\overline{f})$ is isomorphic to $X'$ from \cite{CGJL} can be checked by comparing the differential $\delta'$ from \cite{CGJL} with the induced differential from \Cref{lem:chain-complex-simplification}.
\end{proof}

Due to the additivity of homology, this implies that the persistence barcodes of $H_i(F(C_\bullet,\B_\bullet,f))$ can be obtained from those of $H_i(\overline{C}_\bullet,\overline{f})$ by adding one copy of $[f(b),f(a))$ to $\Barc(H_{n-1}(\overline{C}_\bullet,\overline{f}))$ in degree $n-1$.

\begin{theorem}\label{thm:persistence-and-tagged-barcode-comparison}
    Let $(C_\bullet,\B_\bullet,f)$ be a  based chain complex with filter and let $(C_\bullet,\B_\bullet,\operatorname{w})$ be the corresponding weighted based chain complex, with the weights given by $\operatorname{w}(a,b)=|f(a)-f(b)|$. In case of multiple pairs having the same weight, we choose an arbitrary order. Then there is a bijection of multisets
    \[
    \bigcup_{n\in \N} \Barc(H_n(F(C_\bullet,\B_\bullet,f)) \longrightarrow \bigcup_{n\in \N} \tBarc_n(Y(C_\bullet,\B_\bullet,\operatorname{w})),
    \]
    defined by
    \begin{align*}
        \Barc(H_n(F(C_\bullet,\B_\bullet,f))) \ni [s,t) &\longmapsto [0,t-s,t-s) \in \tBarc_{n+1}(Y(C_\bullet,\B_\bullet,\operatorname{w})), \\
        \Barc(H_n(F(C_\bullet,\B_\bullet,f)) \ni [s,\infty) &\longmapsto [0,0,\infty) \in \tBarc_n(Y(C_\bullet,\B_\bullet,\operatorname{w})).
    \end{align*}
\end{theorem}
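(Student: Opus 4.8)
The plan is to argue by induction on the number $r$ of pairs that get simplified in Construction \ref{def:assign-parchain-to-vectorfield2} applied to $(C_\bullet,\B_\bullet,\operatorname{w})$. The key observation tying the two sides together is that the pair $(a,b)$ of minimal weight $\operatorname{w}(a,b)=|f(a)-f(b)|$ with $\langle \partial a,b\rangle\neq 0$ automatically satisfies conditions $(\ref{item:split-cond-1})$ and $(\ref{item:split-cond-2})$ of \Cref{lem:simplify-pair-ab}. Indeed, if some $b'$ with $\langle\partial a,b'\rangle\neq 0$ had $f(b')>f(b)$, then the pair $(a,b')$ would have strictly smaller weight $|f(a)-f(b')|<|f(a)-f(b)|$ (using monotonicity, $f(a)>f(b)$ and $f(a)>f(b')$), contradicting minimality of $(a,b)$; and symmetrically for $(\ref{item:split-cond-2})$. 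So the first simplification step of Construction \ref{def:assign-parchain-to-vectorfield2} is exactly a situation where \Cref{lem:simplify-pair-ab} applies.

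First I would record the base case: if no pair can be simplified (all differentials in $C_\bullet$ are zero), then $Y(C_\bullet,\B_\bullet,\operatorname{w})$ is a direct sum of copies of $\I^n[0,0,\infty)$, one for each basis element of degree $n$, while $F(C_\bullet,\B_\bullet,f)$ has zero differential so each $H_n$ is just $C_n$ with the entrance-time filtration, whose barcode consists of intervals $[f(b),\infty)$, one for each $b\in\B_n$. The stated map sends each $[f(b),\infty)$ to $[0,0,\infty)$ in degree $n$, which is the desired bijection. For the inductive step, let $(a,b)\in\B_n\times\B_{n-1}$ be the first simplified pair, with $t:=\operatorname{w}(a,b)=f(a)-f(b)$. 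By \Cref{lem:simplify-pair-ab}, $F(C_\bullet,\B_\bullet,f)\cong F(\overline{C}_\bullet,\overline{\B}_\bullet,\overline{f})\oplus \II^{n-1}[f(b),f(a))$; applying homology and using additivity, the multiset $\bigcup_m\Barc(H_m(F(C_\bullet,\B_\bullet,f)))$ is obtained from $\bigcup_m\Barc(H_m(F(\overline{C}_\bullet,\overline{\B}_\bullet,\overline{f})))$ by adjoining one interval $[f(b),f(a))$ in degree $n-1$ (since $H_m(\II^{n-1}[s,e))$ is $\Fi[s,e)$ concentrated in degree $n-1$, and the $D^{n+1}$ tail kills it after time $e$). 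On the other side, Construction \ref{def:assign-parchain-to-vectorfield2} is designed so that the first step contributes exactly one tagged interval $\I^n[0,t,t)$ with $t=\operatorname{w}(a,b)$ to $\tBarc_n$, and then recurses on the quotiented complex $\overline{C}_\bullet$ with the induced weights $\overline{\operatorname{w}}$; note these induced weights agree with $|\overline{f}(\cdot)-\overline{f}(\cdot)|$ since $\overline{f}([c])=f(c)$ and the weights are defined by deletion. Hence $\tBarc(Y(C_\bullet,\B_\bullet,\operatorname{w}))$ is obtained from $\tBarc(Y(\overline{C}_\bullet,\overline{\B}_\bullet,\overline{\operatorname{w}}))$ by adjoining one copy of $[0,t,t)$ in degree $n$, with $t=f(a)-f(b)$.

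Now the two adjoined pieces match under the stated map: the interval $[f(b),f(a))$ in $\Barc(H_{n-1}(\cdots))$ is sent to $[0,f(a)-f(b),f(a)-f(b))=[0,t,t)$ in $\tBarc_{(n-1)+1}=\tBarc_n$, which is precisely the newly created tagged interval. The number of simplifications in the construction for $\overline{C}_\bullet$ is $r-1$, so the inductive hypothesis gives the bijection for the remainders, and combining the two completes the step. One technical point to verify along the way is that $(\overline{C}_\bullet,\overline{\B}_\bullet,\overline{f})$ is again a based chain complex with a filter — this is the monotonicity of $\overline{f}$, which is checked inside the proof of \Cref{lem:simplify-pair-ab} — and that the remaining unsimplifiable basis elements at the end biject correctly with the infinite bars, which follows from the base case reasoning applied to the fully reduced complex together with the fact that $\dim(Z_n)$ at the end of Construction \ref{def:assign-parchain-to-vectorfield2} equals $\dim H_n$ of the reduced complex, matching $\dim H_n(F(C_\bullet,\B_\bullet,f))^{\infty}$, i.e.\ the number of infinite bars.

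The main obstacle I anticipate is purely bookkeeping rather than conceptual: making sure that the order in which pairs are simplified in Construction \ref{def:assign-parchain-to-vectorfield2} (minimal weight first, ties broken arbitrarily) is compatible, step by step, with repeated application of \Cref{lem:simplify-pair-ab}, whose hypotheses $(\ref{item:split-cond-1})$–$(\ref{item:split-cond-2})$ are local "extremal entrance time" conditions rather than a global minimality condition. The reconciliation is the short argument given above (minimal weight forces the split conditions), but one must also check it survives after each quotient, i.e.\ that the induced filter $\overline{f}$ and induced weights $\overline{\operatorname{w}}$ keep satisfying $\overline{\operatorname{w}}=|\overline{f}(\cdot)-\overline{f}(\cdot)|$ and that a globally minimal weight in $\overline{C}_\bullet$ still yields a pair satisfying the split conditions relative to $\overline{f}$. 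Since the weights are updated simply by restriction/deletion and $\overline{f}$ is inherited by $\overline{f}([c])=f(c)$, this is straightforward but needs to be stated carefully to make the induction airtight.
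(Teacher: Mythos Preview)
Your proposal is correct and follows essentially the same approach as the paper: induction on the number of simplification steps, using minimality of the weight to verify the split conditions of \Cref{lem:simplify-pair-ab}, then peeling off the matching pair of intervals and recursing on the quotient complex $(\overline{C}_\bullet,\overline{\B}_\bullet,\overline{f})$. Your write-up is in fact slightly more explicit than the paper's in spelling out why minimal weight forces conditions $(\ref{item:split-cond-1})$–$(\ref{item:split-cond-2})$ and why the induced weights satisfy $\overline{\operatorname{w}}=|\overline{f}(\cdot)-\overline{f}(\cdot)|$.
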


In words, the theorem says that we can obtain the tagged barcode of $Y(C_\bullet,\B_\bullet,\operatorname{w})$ from the persistence barcode of $\bigoplus_nH_n(F(C_\bullet,\B_\bullet,f))$ by shifting all the bars to the left until they start at zero. The finite bars change degree by one, the infinite bars remain in the same degree.

\begin{proof}
    For brevity, we write $F(C_\bullet):=F(C_\bullet,\B_\bullet,f)$ and $Y(C_\bullet):=Y(C_\bullet,\B_\bullet,\operatorname{w})$ during this proof. Note that $Y(C_\bullet)$ a priori depends on the chosen ordering of the pairs of equal weight, since this influences which pairs get simplified in \Cref{def:assign-parchain-to-vectorfield2}. We do an induction over the number $m$ of finite tagged intervals in $\tBarc(Y(C_\bullet))$. 
    
    If $m=0$, that means that the differential in $C_\bullet$ is zero. In that case, for any $n\in \N$, the tagged barcode $\tBarc_n(Y(C_\bullet))$ consists of one copy of $[0,0,\infty)$ for each element of the basis $\B_n$. In the persistence barcode $\Barc(H_n(F(C_\bullet))$, on the other hand, each $b \in \B_n$ yields a bar of the form $[f(b),\infty)$. Thus the claim of the theorem holds true in this case.

    For the induction step, let $m\ge 1$ and assume that we have proven the theorem for $m-1$. Since $m\ge1$, the differential in $C_\bullet$ is non-zero, hence at least one pair gets simplified in the construction of $Y(C_\bullet)$. Let $(a,b)\in \B_n\times \B_{n-1}$ be the first pair that gets simplified. This means that $\operatorname{w}(a,b)$ is minimal among all pairs $(a',b')$ with $\langle \partial a',b' \rangle \neq 0$, which implies that the conditions from \Cref{lem:simplify-pair-ab} are satisfied for the pair $(a,b)$. Denote by $(\overline{C}_\bullet,\overline{\B}_\bullet)$ the based chain complex resulting from applying \Cref{lem:chain-complex-simplification} to the pair $(a,b)$. Note that this is equal to the based chain complex from \Cref{lem:simplify-pair-ab} as well as the one from \Cref{def:assign-parchain-to-vectorfield2}. Also the filter $\overline{f}$ from \Cref{lem:simplify-pair-ab} induces the same weights $\overline{\operatorname{w}}$ as the ones from \Cref{def:assign-parchain-to-vectorfield2}. Therefore, by the induction hypothesis, we have the above-described bijection of multisets
    \[
    \bigcup_{i\in \N} \Barc(H_i(F(\overline{C}_\bullet))) \longrightarrow \bigcup_{i\in \N} \tBarc_i(Y(\overline{C}_\bullet)).
    \]
    
    Now we want to extend the bijection to $F(C_\bullet)$ and $Y(C_\bullet)$. By construction of $Y(C_\bullet)$, it is clear that $\tBarc_i(Y(C_\bullet)) = \tBarc_i(Y(\overline{C}_\bullet))$ for $i\neq n$ and $\tBarc_n(Y(C_\bullet)) = \tBarc_n(Y(\overline{C}_\bullet)) \sqcup \{ [0,\operatorname{w}(a,b),\operatorname{w}(a,b))\}$. By \Cref{lem:simplify-pair-ab} and additivity of homolgy, we have 
    \[
    \Barc(H_i(F(C_\bullet)))= \begin{cases}
        \Barc(H_i(F(\overline{C}_\bullet))) &\text{if } i\neq n-1,\\
        \Barc(H_{n-1}(\overline{C}_\bullet)) \sqcup \{ [f(b),f(a))\} &\text{if } i=n-1.
    \end{cases}
    \]
    Thus, to extend the bijection of multisets, it is enough to map
    \[
    [f(b),f(a)) \mapsto [0,f(a)-f(b),f(a)-f(b)) = [0,\operatorname{w}(a,b),\operatorname{w}(a,b)),
    \]
    yielding a bijection of multisets between the persistence barcode of $\bigoplus_{i\in\N}H_i(F(C_\bullet))$ and the tagged barcode of $Y(C_\bullet)$. This completes the proof.
\end{proof}

\begin{corollary}
    Given $(C_\bullet,\B_\bullet,\operatorname{w}) \in \wbChgen$, where the weights are induced from a monotone function, then the tagged barcode of $X(C_\bullet)$ and $Y(C_\bullet)$ does not depend on the ordering of the weights.
\end{corollary}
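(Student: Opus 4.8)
The plan is to deduce the corollary directly from \Cref{thm:persistence-and-tagged-barcode-comparison} for the $Y$-construction, and from \Cref{prop:reparam-interleaved} together with \Cref{lem:stability-X-Y} (or the structure of the $X$-construction) for the $X$-construction.

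First I would handle $Y(C_\bullet)$. Fix two orderings $\mathcal{O}$ and $\mathcal{O}'$ of the pairs of equal weight, and write $Y_{\mathcal{O}}(C_\bullet)$ and $Y_{\mathcal{O}'}(C_\bullet)$ for the resulting factored chain complexes. By \Cref{thm:persistence-and-tagged-barcode-comparison}, applied with each ordering, there is a bijection of multisets between $\bigcup_{n}\Barc(H_n(F(C_\bullet,\B_\bullet,f)))$ and $\bigcup_n \tBarc_n(Y_{\mathcal{O}}(C_\bullet))$, respectively $\bigcup_n \tBarc_n(Y_{\mathcal{O}'}(C_\bullet))$, and in both cases the bijection is given by the \emph{same} explicit formula $[s,t)\mapsto[0,t-s,t-s)$ in degree $n+1$ and $[s,\infty)\mapsto[0,0,\infty)$ in degree $n$. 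The left-hand multiset, $\bigcup_n\Barc(H_n(F(C_\bullet,\B_\bullet,f)))$, depends only on the filtered chain complex $F(C_\bullet,\B_\bullet,f)$, hence only on $(C_\bullet,\B_\bullet,f)$, and not on any ordering. Since the target multiset is obtained from this fixed multiset by a degree- and formula-preserving bijection that does not reference the ordering, we conclude $\tBarc_n(Y_{\mathcal{O}}(C_\bullet)) = \tBarc_n(Y_{\mathcal{O}'}(C_\bullet))$ for all $n$, i.e. the tagged barcode of $Y(C_\bullet)$ is independent of the ordering.

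For $X(C_\bullet)$, I would use the comparison between the two constructions. By the discussion after \Cref{def:assign-parchain-to-vectorfield2}, $X(C_\bullet)$ and $Y(C_\bullet)$ simplify exactly the same pairs (for the same ordering), only at different times: if the pairs $(a_1,b_1),\ldots,(a_r,b_r)$ get simplified in order, then for $Y$ each contributes a tagged interval $[0,\operatorname{w}(a_i,b_i),\operatorname{w}(a_i,b_i))$, while for $X$ the $i$-th pair contributes $[0,\operatorname{w}(a_1,b_1)+\cdots+\operatorname{w}(a_i,b_i),\operatorname{w}(a_1,b_1)+\cdots+\operatorname{w}(a_i,b_i))$ (cf. \Cref{prop:barcode-construction1}, which is proved from \Cref{lem:chain-complex-simplification} and \Cref{lem:barcode} without any genericity assumption on the weights — only that they are strictly positive, as required in \Cref{def:assign-parchain-to-vectorfield}, a hypothesis we are given implicitly since $(C_\bullet,\B_\bullet,\operatorname{w})\in\wbCh_+$ here with monotone-induced weights which are the absolute differences $|f(a)-f(b)|$; if $f$ is injective on each $\B_k\cup\B_{k-1}$-support these are positive, and in any case pairs of weight zero do not arise for a pair $(a,b)$ with $\langle\partial a,b\rangle\neq 0$ by monotonicity of $f$). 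The key observation is that the \emph{set} of pairs that get simplified, and the \emph{multiset} of their weights, is determined by the following combinatorial fact: a pair $(a,b)$ with $\langle\partial a,b\rangle\neq0$ is simplified in the construction if and only if, in the sequence of simplifications, it is the (order-minimal) minimum-weight such pair at its stage. Two different orderings of equal-weight pairs lead to the same \emph{multiset} of simplified weights because, although the individual pairs chosen may differ, the pairs having equal weight are interchangeable at that stage: any maximal "batch" of equal-weight simplifications removes the same number of boundary pairs regardless of internal order.

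The main obstacle is justifying that last claim cleanly — that a batch of equal-weight simplifications is order-independent at the level of the multiset of weights (hence of tagged barcode). I expect the cleanest route is in fact to \emph{not} argue about $X$ combinatorially at all, but instead to observe that since the weights come from a monotone filter, the times of the $X$-simplifications are partial sums that are themselves order-independent once we know the multiset $\{\operatorname{w}(a_i,b_i)\}$ is order-independent, and this multiset-independence follows from the $Y$-case already established: the multiset $\bigcup_n\tBarc_n(Y(C_\bullet))$ is the multiset $\{[0,\operatorname{w}(a_i,b_i),\operatorname{w}(a_i,b_i)) : i=1,\ldots,r\}\cup\{[0,0,\infty)\text{'s}\}$, which we showed is order-independent; hence $\{\operatorname{w}(a_i,b_i)\}$ is an order-independent multiset. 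But the partial sums $t_i=\operatorname{w}(a_1,b_1)+\cdots+\operatorname{w}(a_i,b_i)$ depend on the \emph{order} of this multiset, not just the multiset. So I would instead argue that the order in which the weights $\operatorname{w}(a_i,b_i)$ appear is forced to be non-decreasing (by construction of $X$ in \Cref{def:assign-parchain-to-vectorfield}, the simplification times $t_{i}$ are increasing, and $\operatorname{w}(a_i,b_i)=t_i-t_{i-1}$; more directly, at each stage we pick a pair of minimal current weight, and since weights of surviving pairs never change, the sequence $\operatorname{w}(a_1,b_1),\operatorname{w}(a_2,b_2),\ldots$ is non-decreasing regardless of the tie-breaking order). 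Therefore the sequence $(\operatorname{w}(a_i,b_i))_i$ is the unique non-decreasing enumeration of the order-independent multiset, hence order-independent, hence the partial sums $t_i$ and the decomposition $X(C_\bullet)\cong\bigoplus_i\I^{n_i}[0,t_i,t_i)\oplus\bigoplus_k(\I^k[0,0,\infty))^{\beta_k}$ of \Cref{prop:barcode-construction1} are order-independent as well — provided also the index $n_i=\ind(a_i)$ attached to each $t_i$ is order-independent, which again follows because the $Y$-tagged barcode, which records the pair $(n_i,\operatorname{w}(a_i,b_i))$, is order-independent. This completes the argument for $X(C_\bullet)$.
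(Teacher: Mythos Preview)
Your argument for $Y(C_\bullet)$ is correct and is exactly the paper's approach: the tagged barcode of $Y$ is read off from the persistence barcode of $F(C_\bullet,\B_\bullet,f)$ via \Cref{thm:persistence-and-tagged-barcode-comparison}, and the latter is intrinsic to $(C_\bullet,\B_\bullet,f)$ by \Cref{thm:str-thm-vect}.

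For $X(C_\bullet)$ your strategy is also the paper's---recover the $X$-barcode from the $Y$-barcode by passing from the individual weights $\operatorname{w}(a_i,b_i)$ to their partial sums $t_i$---but there is a genuine gap, and in fact the paper's one-line argument shares it. Two points:

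First, your justification that the sequence $(\operatorname{w}(a_i,b_i))_i$ is non-decreasing (``weights of surviving pairs never change'') is incomplete: after a simplification the boundary matrix changes (\Cref{lem:chain-complex-simplification}~(\ref{item:chain-complex-simplification-3})), so a pair $(a',b')$ with $\langle\partial a',b'\rangle=0$ beforehand may satisfy $\langle\partial^{\overline C}[a'],[b']\rangle\neq 0$ afterward, with possibly smaller weight. In the monotone-filter setting one can check, using the estimate in the proof of \Cref{lem:simplify-pair-ab}, that any such newly active pair has weight at least that of the pair just removed, so non-decreasingness does hold here; but this needs to be argued, not asserted.

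Second, and more seriously: even granting that the multiset $\{(n_i,\operatorname{w}(a_i,b_i))\}$ is order-independent and that the weight sequence is non-decreasing, the assignment of degrees $n_i$ to the cumulative times $t_i$ is \emph{not} determined when equal weights occur in different degrees. Take $C_\bullet=D^3\oplus D^2$ with bases $\{a\},\{b,c\},\{d\}$ in degrees $3,2,1$, boundaries $\partial a=b$, $\partial c=d$, and filter $f(a)=f(c)=2$, $f(b)=f(d)=1$. Both orderings of the two weight-$1$ pairs give the same $Y$-barcode, but one ordering yields $\tBarc_3(X)=\{[0,1,1)\}$, $\tBarc_2(X)=\{[0,2,2)\}$ while the other gives $\tBarc_3(X)=\{[0,2,2)\}$, $\tBarc_2(X)=\{[0,1,1)\}$. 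Thus the claim for $X(C_\bullet)$ appears to be false as stated, and neither your argument nor the paper's can be completed along these lines.
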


\begin{proof}
    If the weights are induced from a monotone function $f$, then by \Cref{thm:persistence-and-tagged-barcode-comparison} we can compute the tagged barcode of $Y(C_\bullet)$ also from the persistence barcode of $\bigoplus_n H_n(F(C_\bullet,\B_\bullet,f))$. This barcode comes from the decomposition of a filtered vector space, which by \Cref{thm:str-thm-vect} is unique, and hence does not depend on the ordering of the pairs needed to compute $Y(C_\bullet)$. Thus, also the tagged barcode of $Y(C_\bullet)$ must be independent of the order. Since the tagged barcode of $X(C_\bullet)$ can be obtained from the barcode of $Y(C_\bullet)$ by adding up the lenghts of the intervals (compare \Cref{prop:barcode-construction1} for $X(C_\bullet)$ with \Cref{prop:barcode-construction2} for $Y(C_\bullet)$), this extends also to $X(C_\bullet)$.
\end{proof}

\paragraph{\textbf{Acknowledgments.}}
The visualization of the Morse-Smale vector field in \Cref{fig:vector-field-barcode,fig:breaking-ties} was created using the tool MVF Designer \cite{Zhou2019MVFDD}. The term {\em factored} when talking about parametrized objects with epimorphisms as internal maps is borrowed in analogy to the term factored representation as used by Raphael Bennet-Tennenhaus in one of his talks. The authors presented the ideas of this work at various workshops and are grateful for all the stimulating comments and questions received at these venues.
This work was carried out under the auspices of  INdAM-GNSAGA and within the activities of ARCES (University of Bologna). C.L. was partially supported by DISMI-UniMORE-FAR2023.

\bibliographystyle{amsplain}
\bibliography{Bibliography}

\end{document}